\newcommand{\R}{\mathbb{R}}
\newcommand{\tr}{^{\sf T}}
\newcommand{\m}[1]{{\bf{#1}}}
\newcommand{\C}[1]{{\cal {#1}}}
\newtheorem{remark}{Remark}[section]
\newtheorem{assum}{Assumption}
\newtheorem{defin}{Definition}
\title{Decentralized Conjugate Gradient and Memoryless BFGS Methods 
\thanks{
The authors gratefully acknowledge support by the National Science Foundation under grants...
}}
\author{
	    Liping Wang\thanks{{\tt wlpmath@nuaa.edu.cn},
		School of Mathematics,
		Nanjing University of Aeronautics and Astronautics.}
	\and
    Hao Wu \thanks{{\tt wuhoo104@nuaa.edu.cn},
    School of Mathematics,
Nanjing University of Aeronautics and Astronautics.}
\and
Hongchao Zhang\thanks{{\tt hozhang@math.lsu.edu},
Department of Mathematics, 
Louisiana State University}
}
\begin{document}
\maketitle
\begin{abstract}
This paper proposes a new decentralized conjugate gradient (NDCG) method and a decentralized memoryless BFGS (DMBFGS) method for the nonconvex and strongly convex
decentralized optimization problem, respectively, of minimizing a finite sum of continuously differentiable functions over a fixed-connected undirected network. 
Gradient tracking techniques are applied in these two methods to enhance their convergence properties and the numerical stability. 
In particular, we show global convergence of NDCG with constant stepsize for general nonconvex smooth decentralized optimization.
Our new DMBFGS method uses a scaled memoryless BFGS technique and only requires gradient information to approximate second-order information of
the component functions in the objective. We also establish global convergence and linear convergence rate of DMBFGS with constant stepsize for strongly convex smooth decentralized optimization.
Our numerical results show that NDCG and DMBFGS are very efficient in terms of both iteration and communication cost
compared with other state-of-the-art methods for solving smooth decentralized optimization. 
\end{abstract}
\begin{keywords}
decentralized optimization, gradient tracking, conjugate gradient method, constant stepsize, memoryless BFGS, global convergence, linear convergence rate
\end{keywords}

\begin{AMS}
\end{AMS}

\pagestyle{myheadings}
\thispagestyle{plain}
	\section{INTRODUCTION}
In this paper, we consider the following decentralized optimization problem over an undirected and connected network containing $n$ nodes
\begin{equation}\label{obj_fun1}
	\mathop {\min } \limits_{\m{z} \in {\R^p}} \; F(\m{z}):= \frac{1}{n}\sum\limits_{i = 1}^n {{f_i}(\m{z})},
\end{equation}
where the local objective function $f_i:\R^{p} \rightarrow \R$ is  continuously differentiable. 
Consider the underlying network $\mathcal{G}=\left(\mathcal{V},\mathcal{E}\right)$, where $\mathcal{V} = \{1,\ldots,n\}$ is the set of nodes, and $\mathcal{E}$ is the collection of unordered edges.
We denote two nodes as neighbors if they are connected by an
edge. In decentralized setting, there does not exist one central server to gather local information from other nodes, compute shared global information, or broadcast it to all other nodes. Each local function $f_i$ is only known to node $i$ and all the nodes collaborate with their neighbors through information exchange (i.e., communication) to obtain the consensus minimizer. Decentralized optimization has wide applications including decentralized resources control \cite{fusco2021decentralized}, wireless networks \cite{jeong2022asynchronous}, decentralized machine learning \cite{zhang2022distributed}, power systems \cite{chen2020fully}, federated learning \cite{pillutla2022robust}.

Because of wide and important practical applications, decentralized optimization has been extensively studied, where gradient-based first-order methods have attracted much attention due to their simple implementation
and low computation cost at each iteration. Among numerous first-order methods, decentralized gradient descent (DGD) \cite{nedic2009distributed,yuan2016convergence,zeng2018nonconvex} methods 
were most early developed and widely used.  However, DGD only converges to a stationary point of the original problem with a diminishing stepsize.
When using a constant stepsize, DGD often converges to a stationary point of a Lyapunov function instead of the original problem itself \cite{yuan2016convergence}.
Gradient tracking (GT) techniques \cite{xu2015augmented,qu2017harnessing,nedic2017achieving,nedic2017geometrically,xin2019distributed, gao2022achieving, zhang2020distributed,song2024optimal}
were developed for using constant stepsize without losing the exact convergence, 
which utilize a dynamical average consensus \cite{zhu2010discrete}  for local gradient approximations at any nodes to track the global average gradient.
Actually, GT-based methods were originally studied for convex and strongly convex optimization problems \cite{xu2015augmented,qu2017harnessing,nedic2017achieving,nedic2017geometrically}.  
Lu et al. \cite{lu2019gnsd} first propose a GT-based nonconvex stochastic decentralized method, which extends the deterministic GT techniques \cite{qu2017harnessing,nedic2017achieving} to the nonconvex stochastic setting.
Hong et al. \cite{hong2022divergence} relax the Lipschitz condition on the local gradients and propose a multi-stage GT method with global convergence. 
Takezawa et al. \cite{takezawa2022momentum} further propose a momentum tracking (MT) method by introducing and dynamically averaging momentum into stochastic GT to accelerate the convergence.
The MT technique as well as the loopless Chebyshev acceleration \cite{song2024optimal} are also used in  \cite{huang2024accelerated}. 
Considering GT-based and MT-based methods require two rounds of communication per iteration, Aketi et al. \cite{NEURIPS2023_98f8c89a} propose to apply
the tracking mechanism for the variable updates to save communication, while slow convergence is often expected for the method because of the dissimilarity between local and global gradients.

Because of the great advantages of nonlinear conjugate gradient (CG) methods over standard gradient descent methods for smooth nonconvex nonlinear optimization,    
decentralized CG methods have also been recently studied for decentralized optimization. 
However, the development of decentralized CG methods is very limited and far from mature.
Xu et al. \cite{xu2020distributed} propose a distributed online CG algorithm for a distributed online optimization,
where uniformly bounded conjugate parameters, diminishing stepsizes and  extra projection step are needed for global convergence.
A decentralized Riemannian CG descent Method is proposed in  \cite{chen2023decentralized}, which requires the stepsizes to decrease and satisfy the local strong Wolfe condition on each node. 
By using the dynamic average consensus technique\cite{zhu2010discrete}, a decentralized CG method using constant stepsize is developed \cite{shorinwa2023distributed}.
However, we think the convergence analysis in \cite{xu2020distributed,chen2023decentralized,shorinwa2023distributed} is inadequate or problematic, which will be more explained in Section 2.1.


When the problem is convex or strongly convex, there is more flexibility to design the algorithms.
 Xin and Khan \cite{xin2019distributed} present a distributed heavy ball method, called ABm, that combines GT with an alternative momentum. 
 Gao et al. \cite{gao2022achieving} introduce the Barzilai-Borwein (BB) \cite{barzilai1988two} technique into the adapt-then-combine (ATC) GT method \cite{xu2015augmented,nedic2017geometrically} to adaptively compute the  stepsizes for each node. 
Optimal GT (OGT) \cite{song2024optimal} is the first decentralized gradient-based method, not relying on inner loops to reach the optimal complexities for minimizing smooth strongly convex optimization problems. 
Quasi-Newton techniques are more widely and conveniently used in strongly convex optimization, 
since the curvature information of the Hessian of the objective function can be easily captured with a low computation cost.
 Eisen et al. \cite{eisen2017decentralized} propose a decentralized BFGS (DBFGS) method to solve a penalized problem, where the consensus constraint violation is reduced by a penalty approach.
However, the DBFGS \cite{eisen2017decentralized} is an inexact penalty method in the sense that the penalty parameter needs to go to infinity for ensuring global convergence. 
PD-QN \cite{eisen2019primal} improves DBFGS in the primal-dual framework. A decentralized ADMM \cite{li2021bfgs}  incorporates the BFGS quasi-Newton technique to improve computation efficiency.
 Note that quasi-Newton methods overcome the difficulty of computing Hessian matrices but can not avoid the high memory requirement of the Hessian approximation matrices. 
Zhang et al. \cite{zhang2023variance} propose a damped limited-memory BFGS (D-LM-BFGS) method and a damped regularized limited-memory DFP (DR-LM-DFP) method, for which only modest memory is needed. 
In addition, D-LM-BFGS reduces computation cost by realizing a two-loop recursion. 
Existing decentralized quasi-Newton methods usually add a regularization term or take a damping technique to ensure 
the generated quasi-Newton matrices are positive definite and have bounded eigenvalues. 

In this paper, new decentralized CG and memoryless quasi-Newton methods are proposed for solving nonconvex and strongly convex optimization problems, respectively. 
Our main contributions are as follows. 
\begin{itemize}
	\item [1.] Motivated by the centralized Polak-Ribi{\`e}re-Polyak CG method that can guarantee convergence using constant stepsizes for minimizing smooth nonconvex optimization, 
we propose a new decentralized CG method (NDCG). To the best of our knowledge, NDCG is the first decentralized CG method using constant stepsize with global convergence for
smooth nonconvex minimization problems. The convergence of NDCG only requires Lipschitz continuity of the gradients and lower boundedness of the objective function, which is the 
same as those requirements for the centralized nonlinear CG method.
	Our numerical experiments show NDCG is very effective compared with other decentralized gradient-based methods for nonconvex optimization.
	\item[2.] Considering the relationship between the nonlinear CG methods and the memoryless quasi-Newton methods, 
we develop a decentralized memoryless BFGS method (DMBFGS) where the updating direction is derived from the scaled memoryless BFGS technique \cite{shanno1978convergence}.
The DMBFGS has property that only gradient information is used to capture the second-order information. In addition,
the quasi-Newton matrix generated by DMBFGS is positive definite and has bounded eigenvalues without using regularization or damping techniques. 
       We show DMBFGS  has linear convergence rate when the objective function is strongly convex.
       Our numerical experiments also show DMBFGS performs better than other advanced decentralized first-order methods for minimizing strongly convex optimization.
\end{itemize}
The paper is organized as follows. In Section~2, we first review the previously developed decentralized CG methods and then propose our NDCG and DMBFGS methods.
Global convergence properties of the two methods are also studied in this section. 
Numerical experiments of comparing our new methods with other well-established first-order methods for solving decentralized optimization are presented in Section~3.
We finally draw some conclusions in Section 4.

\subsection{Notation}
We use uppercase boldface letters, e.g.~$\m{W}$, for matrices and lowercase boldface letters,  e.g.~$\m{w}$, for vectors.
For any vectors $\m{v}_i \in \R^p$, $i=1,\ldots,n$, we define  $\bar{\m{v}}=\frac{1}{n} \sum_{i=1}^n \m{v}_i$ and $\m{v} = [\m{v}_1; \m{v}_2; \ldots, \m{v}_n] \in \R^{np}$.
Given an undirected network  $\mathcal{G}=\left(\mathcal{V},\mathcal{E}\right)$,
 let $\m{x}_i$ denote the local copy of the global variable $\m{z}$ at node $i$ and $\mathcal{N}_i$ denote the set consisting of the neighbors of node $i$ 
(for convenience, we treat node $i$ itself as one of its neighbors). 
We define $f(\m{x}) = \sum_{i = 1}^n {{f_i}({\m{x}_i})}$ and use $\m{g}^t$, $\m{g}_i^t$ to stand for $\nabla f(\m{x}^t)$, $\nabla f_i(\m{x}^t_{i})$ respectively,
where, for clarification, the gradient of $f(\m{x})$ is defined as
$ \nabla f(\m{x}) =\left[ \nabla f_1(\m{x}_{1}); \nabla f_2(\m{x}_{2}); \ldots, \nabla f_n(\m{x}_{n}) \right] \in \R^{np}$. 
In addition,  we define $\overline{\nabla} f(\m{x}^t)=\frac{1}{n} \sum_{i=1}^n \nabla f_i(\m{x}_i^t) 
\in \R^p$.
We say that $\m{x}$ is consensual or gets consensus if ${\m{x}_1}={\m{x}_2}=\ldots={\m{x}_n}$. 
$\m{I}_p$ denotes the $p \times p$ identity matrix and $\m{I}$ denote $\m{I}_{np}$ for simplicity. Kronecker Product is denoted as $\otimes$.
Given a vector $\m{v}$ and a matrix $\m{N}$, $\operatorname{span}(\m{v})$ stands for the linear subspace spanned by $\m{v}$;  $\left\|\m{v}\right\|_{\m{N}}^2$ denotes $\m{v}\tr \m{N} \m{v}$;
$\mbox{Null}(\m{N})$ and $\m{N}\tr$  denote the null space and transpose of $\m{N}$, respectively;
${\lambda _{\min }(\m{N})}$,  ${\lambda _{\max }(\m{N})}$, and $\rho(\m{N})$ denote smallest eigenvalue, largest eigenvalue, and  the spectral radius of $\m{N}$,  respectively;
For matrices $\m{N}_1$ and $\m{N}_2$ with same dimension, $\m{N}_1 \succeq \m{N}_2$ means $\m{N}_1 - \m{N}_2$ is positive semidefinite, while $\m{N}_1 \ge \m{N}_2$ means $\m{N}_1 - \m{N}_2$ is component-wise nonnegative.
We denote $\operatorname{log}_{10}(\cdot)$ by $\operatorname{log}(\cdot)$  and define $\m{M}=\frac{1}{n} \m{1}_n\m{1}_n\tr\otimes \m{I}_p$ where $\m{1}_n \in \R^n$ denotes the
vector with all components ones. 

\section{ALGORITHM DEVELOPMENT AND CONVERGENCE RESULTS}
In this section, we derive our new decentralized CG  and memoryless BFGS methods for solving nonconvex and strongly convex 
optimization problems, respectively. 
To analyze global convergence of the new methods, we need the following assumption and some preliminary results.

\begin{assum}\label{as0}
The local objective functions $\{f_i \}_{i=1}^n$ are bounded below and Lipschitz continuously differentiable, that is, for any $\m{z}, \tilde{\m{z}} \in \R^p$ and $i=1, \ldots,n$, 
we have  $f_i(z)>-\infty$ and 
	\begin{equation}\label{3.1}
		\left\| {\nabla {f_i}(\m{z}) - \nabla {f_i}(\tilde{\m{z}})} \right\| \le L \left\| {\m{z} - \tilde{\m{z}}} \right\|,
	\end{equation}
\end{assum}
where $L >0$ is a Lipschitz constant.

In decentralized optimization it is convenient to parameterize communication by a mixing matrix $\tilde{\m{W}}=[ \tilde{W}_{i j} ]\in \R^{n \times n}$, which is defined as follows.
\begin{defin}\label{mix}
	(Mixing matrix $\tilde{\m{W}}$ for given network $\mathcal{G} = \left(\mathcal{V},\mathcal{E}\right)$)
	\begin{itemize}
		\item [1.] $\tilde{\m{W}}$ is nonnegative, where each component $\tilde{W}_{ij}$ characterizes the active link $(i,j)$, i.e., $\tilde{{W}}_{i,j}>0$ if $j \in \mathcal{N}_i$;  $\tilde{{W}}_{i,j}=0$, otherwise.
		\item [2.] $\tilde{\m{W}}$ is symmetric and doubly stochastic, i.e., $\tilde{\m{W}}=\tilde{\m{W}}\tr$ and $\tilde{\m{W}}\m{1}_n=\m{1}_n$.\\
	\end{itemize}
\end{defin}
There are a few common choices for the mixing matrix $\tilde{\m{W}}$, such as the Laplacian-based constant edge weight matrix \cite{sayed2014diffusion} and
the Metropolis constant edge weight matrix \cite{xiao2007distributed}.
Let $\lambda_{i}(\tilde{\m{W}})$ denote the $i$-th largest eigenvalue of $\tilde{\m{W}}$ and $\sigma$ be the second largest magnitude eigenvalue of $\tilde{\m{W}}$. 
Then, the following properties hold \cite{xin2019distributed}.
\begin{lemma}\label{property W}
For $\tilde{\m{W}}$ defined in Definition~\ref{mix} and  $\m{W} :=\tilde{\m{W}}\otimes\m{I}_p$, we have
	\begin{itemize}
		\item [1.] $1=\lambda_{1}(\tilde{\m{W}})>\lambda_{2}(\tilde{\m{W}})\geq\ldots\geq\lambda_{n}(\tilde{\m{W}})>-1$;
		\item [2.] $	0<\rho(\m{W}-\m{M})=\sigma=\max \left\{|\lambda_{2}(\tilde{\m{W}})|, |\lambda_{n}(\tilde{\m{W}})|\right\}<1$;
		\item [3.] $\m{M}=\m{M}\m{W}=\m{W}\m{M}$;
		\item [4.]  $\|\m{W}\m{x}-\m{M}\m{x}\|=\|(\m{W}-\m{M})(\m{x}-\m{M}\m{x})\| \leq \sigma \|\m{x}-\m{M}\m{x}\|$
		for any $\m{x} \in \R^{np}$. 
	\end{itemize}
\end{lemma}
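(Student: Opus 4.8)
The plan is to prove the four items in order, doing the real work in item~1 (a Perron--Frobenius-type statement) and then deducing items~2--4 by Kronecker-product bookkeeping. For item~1: since $\tilde{\m{W}}$ is symmetric it has real eigenvalues and an orthonormal eigenbasis, and since it is nonnegative and doubly stochastic, $\rho(\tilde{\m{W}})=\|\tilde{\m{W}}\|_2\le\sqrt{\|\tilde{\m{W}}\|_1\|\tilde{\m{W}}\|_\infty}=1$ because every row and every column sums to $1$; as $\tilde{\m{W}}\m{1}_n=\m{1}_n$ exhibits $1$ as an eigenvalue, $\lambda_1(\tilde{\m{W}})=1$ and all eigenvalues lie in $[-1,1]$. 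The two strict inequalities are exactly where connectedness and the positive diagonal $\tilde{W}_{ii}>0$ (which holds since $i\in\mathcal{N}_i$) enter. For $\lambda_1>\lambda_2$ I would take any $\m{v}$ with $\tilde{\m{W}}\m{v}=\m{v}$, let $k$ attain $v_k=\max_i v_i$, and use $v_k=\sum_j\tilde{W}_{kj}v_j\le\bigl(\sum_j\tilde{W}_{kj}\bigr)v_k=v_k$ to force $v_j=v_k$ for every $j\in\mathcal{N}_k$; connectedness then propagates $v_j=v_k$ to all $j$, so the $1$-eigenspace is $\operatorname{span}(\m{1}_n)$ and hence $\lambda_2(\tilde{\m{W}})<1$. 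For $\lambda_n>-1$, suppose $\tilde{\m{W}}\m{v}=-\m{v}$ with $\m{v}\neq\m{0}$; after possibly replacing $\m{v}$ by $-\m{v}$, pick $k$ with $v_k=\|\m{v}\|_\infty>0$. Then $-v_k=\sum_j\tilde{W}_{kj}v_j\ge-\bigl(\sum_j\tilde{W}_{kj}\bigr)v_k=-v_k$, so equality holds throughout and $v_j=-v_k$ for every $j$ with $\tilde{W}_{kj}>0$; taking $j=k$ and using $\tilde{W}_{kk}>0$ gives $v_k=-v_k$, a contradiction.

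For item~2, I would exploit $\m{W}-\m{M}=\bigl(\tilde{\m{W}}-\tfrac1n\m{1}_n\m{1}_n\tr\bigr)\otimes\m{I}_p$, so $\rho(\m{W}-\m{M})=\rho\bigl(\tilde{\m{W}}-\tfrac1n\m{1}_n\m{1}_n\tr\bigr)$ (the spectrum is that of the $n\times n$ factor, each eigenvalue repeated $p$ times). Writing $\tilde{\m{W}}=\sum_{i=1}^n\lambda_i(\tilde{\m{W}})\m{u}_i\m{u}_i\tr$ in an orthonormal eigenbasis with $\m{u}_1=\tfrac1{\sqrt n}\m{1}_n$, and noting that $\tfrac1n\m{1}_n\m{1}_n\tr=\m{u}_1\m{u}_1\tr$ is precisely the spectral projector onto $\operatorname{span}(\m{1}_n)$, the subtraction annihilates the top eigenvalue and leaves $\lambda_2(\tilde{\m{W}}),\ldots,\lambda_n(\tilde{\m{W}})$ unchanged; hence $\rho(\m{W}-\m{M})=\max\{|\lambda_2(\tilde{\m{W}})|,|\lambda_n(\tilde{\m{W}})|\}=\sigma$. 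By item~1 this is $<1$, and it is strictly positive unless every $\lambda_i(\tilde{\m{W}})$ with $i\ge2$ vanishes, i.e.\ unless $\tilde{\m{W}}=\tfrac1n\m{1}_n\m{1}_n\tr$ (the degenerate all-to-all uniform mixing), which one excludes.

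For items~3 and~4 I would compute directly with $(\m{A}\otimes\m{I}_p)(\m{B}\otimes\m{I}_p)=(\m{A}\m{B})\otimes\m{I}_p$. Double stochasticity gives $\tfrac1n\m{1}_n\m{1}_n\tr\tilde{\m{W}}=\tilde{\m{W}}\tfrac1n\m{1}_n\m{1}_n\tr=\tfrac1n\m{1}_n\m{1}_n\tr$ and $\bigl(\tfrac1n\m{1}_n\m{1}_n\tr\bigr)^2=\tfrac1n\m{1}_n\m{1}_n\tr$, which tensor up to $\m{M}\m{W}=\m{W}\m{M}=\m{M}$ and $\m{M}^2=\m{M}$, proving item~3. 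Then, using item~3, $(\m{W}-\m{M})(\m{x}-\m{M}\m{x})=\m{W}\m{x}-\m{W}\m{M}\m{x}-\m{M}\m{x}+\m{M}^2\m{x}=\m{W}\m{x}-\m{M}\m{x}$, which is the stated equality; since $\m{W}-\m{M}$ is symmetric, $\|(\m{W}-\m{M})\m{y}\|\le\rho(\m{W}-\m{M})\|\m{y}\|=\sigma\|\m{y}\|$ for every $\m{y}$, and applying this with $\m{y}=\m{x}-\m{M}\m{x}$ closes item~4. The only genuinely non-routine step is item~1: the two strict spectral gaps are not consequences of stochasticity alone but require irreducibility of $\tilde{\m{W}}$ (connectedness of $\mathcal{G}$) together with the positive diagonal; everything else is linear-algebra bookkeeping on the Kronecker structure and the symmetry of $\m{W}-\m{M}$.
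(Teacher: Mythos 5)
Your proof is correct and self-contained. The paper does not actually prove this lemma --- it simply cites \cite{xin2019distributed} --- so there is no in-paper argument to compare against; your write-up supplies exactly the standard argument that the cited reference would: Perron--Frobenius-type reasoning (irreducibility from connectedness for $\lambda_2<1$, the positive diagonal from $i\in\mathcal{N}_i$ for $\lambda_n>-1$), the observation that $\frac1n\m{1}_n\m{1}_n\tr$ is the spectral projector onto the top eigenspace so that subtracting it leaves the spectrum $\{0,\lambda_2,\ldots,\lambda_n\}$, and Kronecker/symmetry bookkeeping for items 3 and 4. Your caveat on item 2 is also well taken: strict positivity of $\sigma$ fails exactly for $\tilde{\m{W}}=\frac1n\m{1}_n\m{1}_n\tr$, and the paper itself later admits $\sigma=0$ for fully connected networks (Theorem~\ref{DMBFGS-Thm}), so the ``$0<$'' in the lemma implicitly excludes that degenerate uniform mixing matrix, as you note.
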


We also introduce some well-known results used in our convergence analysis.
\begin{lemma}\label{young}
	(Young's inequality) For any two vectors $\m{v}_1,\m{v}_2\in \R^p$, $\eta>0$, we have
\[
2\m{v}_1\tr\m{v}_2 \leq \eta \|\m{v}_1\|^2+ \frac{1}{\eta}\|\m{v}_2\|^2 \quad \mbox{and} \quad
		\|\m{v}_1+\m{v}_2\|^2 \leq (1+\eta) \|\m{v}_1\|^2+ \left(1+\frac{1}{\eta}\right)\|\m{v}_2\|^2.
\]
\end{lemma}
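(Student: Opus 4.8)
The plan is to obtain both bounds from the single elementary fact that $\|\m{u}\|^2 \ge 0$ for every $\m{u} \in \R^p$, together with the expansion $\|\m{u}_1 \pm \m{u}_2\|^2 = \|\m{u}_1\|^2 \pm 2\m{u}_1\tr \m{u}_2 + \|\m{u}_2\|^2$.

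For the first inequality, I would apply this to $\m{u} = \sqrt{\eta}\,\m{v}_1 - \eta^{-1/2}\m{v}_2$, which is well defined since $\eta > 0$. Expanding gives
\[
0 \le \left\| \sqrt{\eta}\,\m{v}_1 - \tfrac{1}{\sqrt{\eta}}\,\m{v}_2 \right\|^2 = \eta \|\m{v}_1\|^2 - 2\m{v}_1\tr\m{v}_2 + \frac{1}{\eta}\|\m{v}_2\|^2,
\]
and rearranging the terms immediately yields $2\m{v}_1\tr\m{v}_2 \le \eta\|\m{v}_1\|^2 + \frac{1}{\eta}\|\m{v}_2\|^2$.

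For the second inequality, I would expand $\|\m{v}_1 + \m{v}_2\|^2 = \|\m{v}_1\|^2 + 2\m{v}_1\tr\m{v}_2 + \|\m{v}_2\|^2$ and then bound the cross term $2\m{v}_1\tr\m{v}_2$ using the first inequality just established; collecting the coefficients of $\|\m{v}_1\|^2$ and $\|\m{v}_2\|^2$ gives $\|\m{v}_1+\m{v}_2\|^2 \le (1+\eta)\|\m{v}_1\|^2 + \left(1 + \frac{1}{\eta}\right)\|\m{v}_2\|^2$.

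Since every step is a direct algebraic manipulation, there is no real obstacle here; the only point requiring care is that $\eta$ must be strictly positive so that $\sqrt{\eta}$ and division by $\eta$ make sense, which is part of the hypothesis. One could alternatively derive the first inequality from Cauchy--Schwarz, $2\m{v}_1\tr\m{v}_2 \le 2\|\m{v}_1\|\,\|\m{v}_2\|$, followed by the scalar bound $2ab \le \eta a^2 + \eta^{-1} b^2$ (itself a consequence of $(\sqrt{\eta}\,a - b/\sqrt{\eta})^2 \ge 0$), but the perfect-square argument above is shorter and self-contained.
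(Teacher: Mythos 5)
Your proof is correct and complete: the first inequality follows from expanding $0 \le \bigl\|\sqrt{\eta}\,\m{v}_1 - \eta^{-1/2}\m{v}_2\bigr\|^2$, and the second follows by expanding $\|\m{v}_1+\m{v}_2\|^2$ and bounding the cross term with the first. The paper states this lemma as a well-known fact without proof, so there is no argument to compare against; your perfect-square derivation is the standard one and fills the gap cleanly.
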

\begin{lemma}\label{important}
	(Jensen's inequality) For any set of vectors $\{\m{v}_i\}_{i=1}^n \subset \R^p$, we have
	$$\left\|\frac{1}{n}\sum_{i=1}^n\m{v}_i\right\|^2\leq\frac{1}{n}\sum_{i=1}^n\left\|\m{v}_i\right\|^2.$$ 
\end{lemma}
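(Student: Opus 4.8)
The plan is to prove the inequality by a direct expansion of the squared norm on the left-hand side, combined with the elementary bound $2\m{v}_i\tr\m{v}_j \le \|\m{v}_i\|^2 + \|\m{v}_j\|^2$, which is precisely the first inequality of Lemma~\ref{young} with $\eta = 1$. The statement is merely convexity of the squared Euclidean norm, so no real difficulty is expected; the only thing to watch is the bookkeeping of the resulting double sum.

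Concretely, I would first write
\[
\left\|\frac{1}{n}\sum_{i=1}^n\m{v}_i\right\|^2
= \frac{1}{n^2}\sum_{i=1}^n\sum_{j=1}^n \m{v}_i\tr\m{v}_j .
\]
Applying $\m{v}_i\tr\m{v}_j \le \tfrac12\big(\|\m{v}_i\|^2 + \|\m{v}_j\|^2\big)$ to every term of the double sum (with equality on the diagonal $i=j$), each $\|\m{v}_k\|^2$ is counted with total weight exactly $n$, so $\sum_{i=1}^n\sum_{j=1}^n \m{v}_i\tr\m{v}_j \le n\sum_{k=1}^n\|\m{v}_k\|^2$. Dividing through by $n^2$ then yields the claimed bound.

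An equivalent route would be to use the triangle inequality $\big\|\sum_i \m{v}_i\big\| \le \sum_i \|\m{v}_i\|$ followed by the scalar Cauchy--Schwarz inequality $\big(\sum_i \|\m{v}_i\|\big)^2 \le n\sum_i \|\m{v}_i\|^2$, or simply to cite convexity of $\m{v}\mapsto\|\m{v}\|^2$ together with the finite-form Jensen inequality using the uniform weights $1/n$. I anticipate no genuine obstacle here; the expansion argument above is the most self-contained given the tools already introduced in the excerpt.
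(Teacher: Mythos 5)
Your proof is correct: the expansion into the double sum $\frac{1}{n^2}\sum_{i,j}\m{v}_i\tr\m{v}_j$, the bound $\m{v}_i\tr\m{v}_j\le\tfrac12(\|\m{v}_i\|^2+\|\m{v}_j\|^2)$, and the weight count (each $\|\m{v}_k\|^2$ picks up total weight $n$) all check out. The paper states this lemma as a well-known fact without giving any proof, so there is nothing to compare against; your self-contained argument (or either of the alternatives you mention) is a perfectly adequate justification.
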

\begin{lemma}\label{important1}
	\cite[Theorem 8.3.2.]{horn2012matrix}\quad Let $\m{M} \in  \R^{p \times p}$ be a nonnegative matrix and $\m{m} \in \R^p$ be a positive vector. 
If $\m{M}\m{m} \leq \omega \m{m}$ for some $\omega \ge 0$, we have $\rho(\m{M})\leq\omega$.
\end{lemma}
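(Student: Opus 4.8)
The plan is to reduce the claim to the elementary fact that the spectral radius of a matrix is bounded above by any norm induced by a vector norm, after performing a diagonal similarity transformation that converts the hypothesis $\m{M}\m{m} \le \omega \m{m}$ into a bound on row sums. Since $\m{m} \in \R^p$ is a \emph{positive} vector, the diagonal matrix $\m{D} := \mathrm{diag}(m_1,\dots,m_p)$ is invertible, and both $\m{D}$ and $\m{D}^{-1}$ are (entrywise) nonnegative. Hence $\hat{\m{M}} := \m{D}^{-1}\m{M}\m{D}$ is again a nonnegative matrix, being a product of nonnegative matrices, and it is similar to $\m{M}$, so $\rho(\hat{\m{M}}) = \rho(\m{M})$.

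Next I would compute the row sums of $\hat{\m{M}}$. Writing $\m{1}\in\R^p$ for the all-ones vector, we have $\m{D}\m{1} = \m{m}$, so
\[
\hat{\m{M}}\m{1} = \m{D}^{-1}\m{M}\m{D}\m{1} = \m{D}^{-1}(\m{M}\m{m}) \le \m{D}^{-1}(\omega \m{m}) = \omega \m{1},
\]
where the inequality uses the hypothesis $\m{M}\m{m}\le\omega\m{m}$ together with the fact that left multiplication by $\m{D}^{-1}\ge\m{0}$ preserves componentwise inequalities. Since $\hat{\m{M}}\ge\m{0}$, the $i$-th component of $\hat{\m{M}}\m{1}$ is exactly the $i$-th row sum of $\hat{\m{M}}$; thus every row sum of $\hat{\m{M}}$ is at most $\omega$, i.e.\ $\|\hat{\m{M}}\|_\infty \le \omega$ (here $\omega\ge0$ is used so that the right-hand side is indeed the maximum-row-sum norm).

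Finally I would conclude using the standard bound $\rho(\m{N}) \le \|\m{N}\|$ valid for any matrix norm induced by a vector norm, applied to $\m{N} = \hat{\m{M}}$ with the $\ell_\infty$-induced (maximum-row-sum) norm:
\[
\rho(\m{M}) = \rho(\hat{\m{M}}) \le \|\hat{\m{M}}\|_\infty \le \omega.
\]
An alternative route would be to iterate the hypothesis, obtaining $\m{M}^k\m{m}\le\omega^k\m{m}$ for all $k\ge1$ by induction (again exploiting that multiplication by $\m{M}\ge\m{0}$ preserves inequalities), bounding the entries of $\m{M}^k$ by $(m_{\max}/m_{\min})\,\omega^k$, and then invoking Gelfand's formula $\rho(\m{M}) = \lim_{k\to\infty}\|\m{M}^k\|^{1/k}$; but the similarity argument is cleaner. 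I do not anticipate a genuine obstacle, as this is a classical Collatz--Wielandt-type estimate; the only point needing a word of care is the degenerate case $\omega = 0$, where $\m{M}\m{m}\le\m{0}$ combined with $\m{M}\ge\m{0}$ and $\m{m}>\m{0}$ forces $\m{M}=\m{0}$ and hence $\rho(\m{M})=0=\omega$ — a case already covered by the computation above.
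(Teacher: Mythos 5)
Your proof is correct. The paper does not prove this lemma itself but simply cites \cite[Theorem 8.3.2]{horn2012matrix}, and your diagonal-similarity argument (conjugating by $\mathrm{diag}(m_1,\dots,m_p)$ to reduce the hypothesis to a row-sum bound and then invoking $\rho(\cdot)\le\|\cdot\|_\infty$) is precisely the standard Collatz--Wielandt-type proof of that cited result, with the degenerate case $\omega=0$ handled correctly.
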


\subsection{Some Literature Review On Decentralized CG}
Let us consider a simple decentralized CG (SDCG) method which was originally mentioned in \cite{shorinwa2023distributed}. 
We notice that the methods from \cite{xu2020distributed,chen2023decentralized,shorinwa2023distributed} are actually developed based on SDCG. 
The iterative formula of SDCG with respect to node $i$ is given as 
\begin{align} 
	& \m{x}_i^{t+1}=\sum_{j \in \mathcal{N}_i} \tilde{W}_{i j} \m{x}_j^{t}+\alpha \m{d}_i^{t}, \label{dcg1}\\ 
	& \m{d}_i^{t+1}=-\m{g}_i^{t+1}+\beta_i^{t+1} \m{d}_i^{t} \label{dcg2}
\end{align}
with initialization $\m{d}_i^{0}=-\m{g}_i^{0}$, where $\alpha>0$ is the stepsize and  $\beta^t_{i}$ is called the conjugate parameter. 
Here, we consider SDCG using constant stepsize. 
 The following is a list of several well-known formulas of $\beta^t_{i}$ for different nonlinear CG methods, such as
Fletcher-Reeves(FR) \cite{fletcher1964function}, Polak-Ribi{\`e}re-Polak(PRP) \cite{polyak1969conjugate}, Hestenes-Stiefel(HS) \cite{hestenes1952methods}, Dai-Yuan(DY) \cite{dai1999nonlinear} and so on \cite{hu1991efficient,perry1978modified}:
\begin{eqnarray}
	&& \beta_{i}^{t,FR}=\frac{\Vert \m{g}^t_i\Vert^2}{\Vert \m{g}^{t-1}_i\Vert^2}  \qquad \quad \mbox{ and } \quad 
	 \beta_{i}^{t,PRP}=\frac{(\m{g}^t_i)\tr \m{y}^{t-1}_i}{\Vert \m{g}^{t-1}_i \Vert ^2}, \label{PRP}\\
&&	\beta_{i}^{t,HS}=\frac{(\m{g}^t_i)\tr \m{y}^{t-1}_i}{(\m{d}^{t-1}_{i})\tr\m{y}^{t-1}_i}  \quad \mbox{ and } \quad 
	\beta_{i}^{t,DY}=\frac{\Vert \m{g}^t_i \Vert^2}{(\m{d}^{t-1}_{i})\tr\m{y}^{t-1}_i},\label{DY}
\end{eqnarray}
where $\m{y}^t_{i}=\m{g}^{t+1}_i-\m{g}^t_i$. 

\begin{remark}
	Note that SDCG from \eqref{dcg1} and \eqref{dcg2} will be reduced to a decentralized gradient descent (DGD) method when $\beta_i^t=0$ for all $i$ and $t$,
       and will become a centralized nonlinear CG method if $\tilde{\m{W}}$ is the identity matrix and $n=1$.
\end{remark}

Following our previously defined notations, we can rewrite \eqref{dcg1} and \eqref{dcg2} for all the nodes together as
\begin{align} 
	& \m{x}^{t+1}=\m{W} \m{x}^{t}+\alpha \m{d}^{t}, \label{DCG1}\\ 
	& \m{d}^{t+1}=-\m{g}^{t+1}+\bm{\beta}^{t+1} \m{d}^{t},\label{DCG2}
\end{align}
where
\begin{equation*}
	{\bm{\beta}^{t+1}} = \left[ {\begin{array}{*{20}{c}}
			{{ \beta_1^{t+1}}\m{I}_p
			}&{}&{}\\
			{}&\ddots&{}\\
			{}&{}&{\beta_n^{t+1}}\m{I}_p
	\end{array}} \right] \in \R^{np \times np}.
\end{equation*}

\begin{figure*}[!t]
	\centering
	\subfloat[]{\includegraphics[width=2.5in]{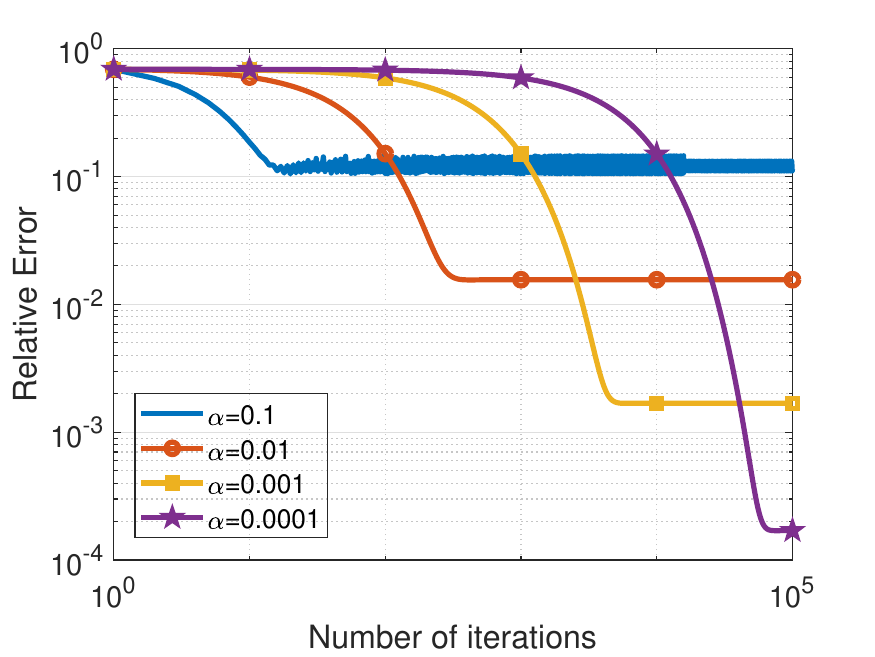}%
		\label{DCG_rel}}
	\hfil
	\subfloat[]{\includegraphics[width=2.5in]{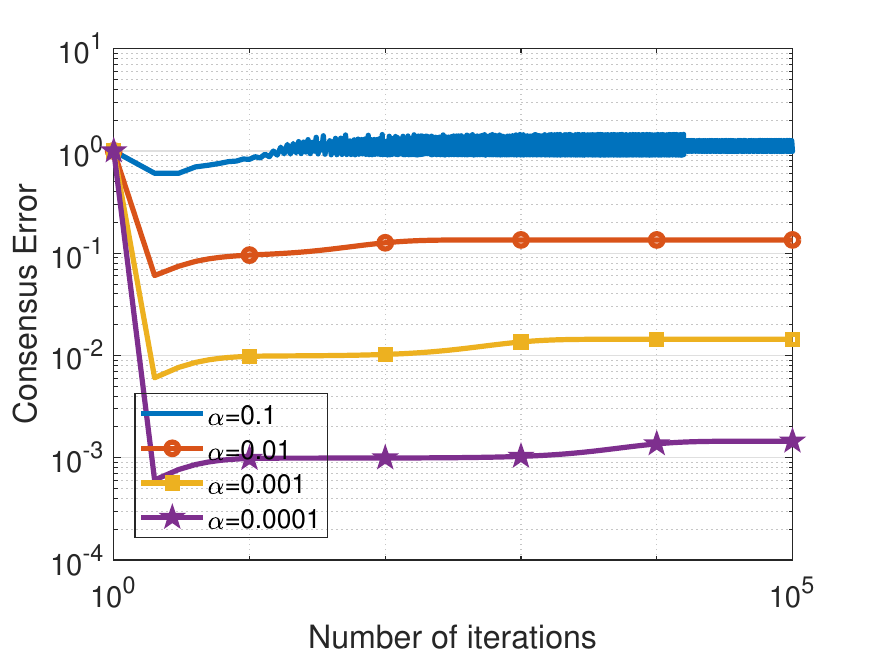}%
		\label{DCG_cons}}
	\caption{Relative error and consensus error of SDCG using $\beta_{i}^{t,PRP}$ versus iterations for stepsizes 0.1, 0.01, 0.001, and 0.0001. Relative error is given by \eqref{rel_error} and consensus error is defined as $\|\m{x}^t-\m{M}\m{x}^t\|$.}
	\label{dcg_stepsize}
\end{figure*}
Let us now apply SDCG to minimize the linear regression problem \eqref{linear_problem} with numerical results given in Fig.~\ref{dcg_stepsize},
where we can see that SDCG has a similar performance to DGD when $\beta_{i}^t=\beta_{i}^{t,PRP}$ in \eqref{PRP} and the convergence accuracy depends on the stepsize. 
However, SDCG  may not converge when the FR, DY and HS conjugate parameters are used,
even in the centralized setting with $n=1$ and $\tilde{\m{W}}=[1]$ \cite{dai2000nonlinear}.  
One reason on the practical convergence of SDCG with $\beta_{i}^t=\beta_{i}^{t,PRP}$ 
could be due to its restarting properties explained as below.
When the iterate jams, the difference $\|\m{x}^{t}_i-\m{x}^{t-1}_i\|$ tends to zero.
Then,  the gradient variation $\|\m{g}^{t}_i-\m{g}^{t-1}_i\|$ goes to zero 
by the Lipschitz condition \eqref{3.1}. However, the local gradient $\m{g}^t_i$ may not go to zero 
even the iterates converge to a stationary point where the total gradient is zero.
Thus, it implies by \eqref{PRP} that the PRP conjugate parameter $\beta_i^{t,PRP}$ will go to zero
 and the direction $\m{d}_i^t$ will tend to be the negative gradient direction $-\m{g}_i^t$, 
 which can be seen from Fig.~\ref{beta_iter}.
On the other hand, for $\beta_{i}^t=\beta_{i}^{t,DY}$ given by \eqref{DY},
the numerator $\Vert \m{g}^t_i \Vert^2$ may not go zero, while the denominator 
$(\m{d}^{t-1}_{i})\tr (\m{g}^{t}_i-\m{g}^{t-1}_i)$ will go to zero if $\m{d}_i^t$ is bounded.
Hence, the DY conjugate parameter $\beta_{i}^{t,DY}$ goes infinity, which
may explain why SDCG using DY formula \eqref{DY} usually diverges.
Based on the above observation, we think there are critical deficiencies 
in the convergence proofs of the decentralized CG methods proposed in \cite{xu2020distributed,chen2023decentralized}.
Another decentralized CG method with constant stepsize was proposed in \cite{shorinwa2023distributed}.
However, its convergence proof requires the local conjugate direction converges
to its mean over all nodes, which generally does not hold.

\begin{figure*}[!t]
	\centering
	\includegraphics[width=2.5in]{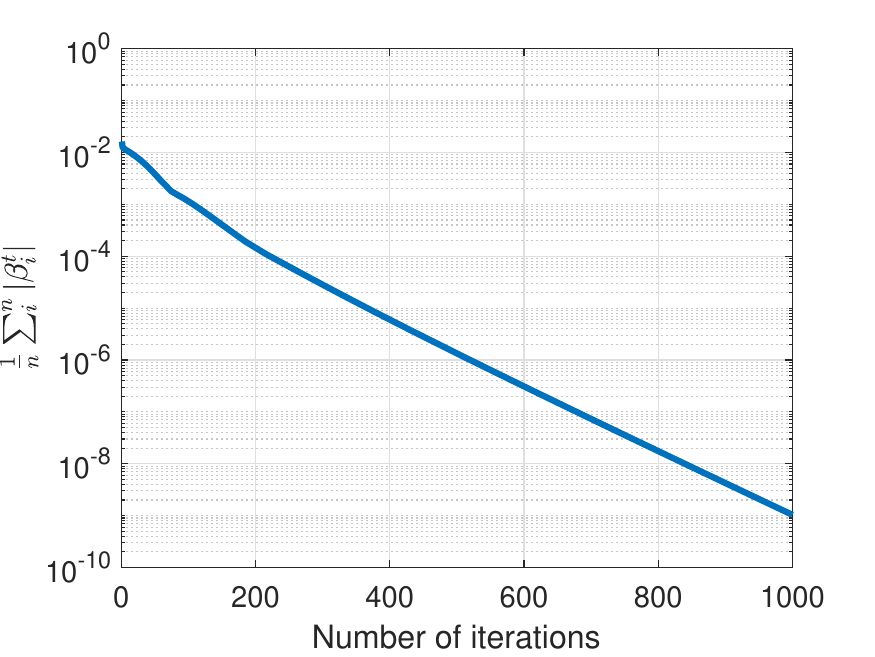}%
	\caption{$\frac{1}{n} \sum_i^n |\beta_{i}^t|$ generated by SDCG versus iterations for $\beta_{i}^t=\beta_{i}^{t,PRP}$.}
	\label{beta_iter}
\end{figure*}

Again, as seen in Fig.~\ref{dcg_stepsize}, although SDCG with using the PRP formula \eqref{PRP} could converge with constant stepsize, it does not converge to the exact solution of the original problem.
On the other hand, Dai \cite{dai2011convergence} shows that the traditional PRP nonlinear CG
with constant stepsize can have global convergence in the centralized setting.
Hence, we ask the following question:
{\it Can we design a decentralized CG algorithm with constant stepsize
such that it guarantees global convergence under similar mild conditions used by the traditional 
centralized nonlinear CG algorithm?}

\subsection{New Decentralized CG Method}
In this subsection, we provide a positive answer to the question raised in the previous section. 
Note that GT-based algorithms achieve the exact convergence 
by dynamically averaging the local gradients.
We would use similar techniques to eliminate the effect of local gradient dissimilarity.
Specifically, we introduce a variable $\m{v}_i$ on node $i$ to track the local gradient average by a dynamical average consensus technique \cite{sayed2014diffusion}.
Initializing  $\m{v}_i^0=\m{g}_i^0$, we use the ATC structure to update  $\m{v}_i^{t+1}$ as
\begin{equation}\label{tracking}
		\m{v}_i^{t+1}=\sum_{j \in \mathcal{N}_i}\tilde{W}_{ij}(\m{v}_j^{t}+\m{g}_j^{t+1}-\m{g}_j^{t}).
\end{equation}
By the above formula \eqref{tracking} and Lemma~\ref{property W}, it follows from induction that
\begin{equation}\label{vequalg}
\m{M}{\m{v}}^t=\m{M}{\m{g}}^t \quad \Longleftrightarrow \quad \bar{\m{v}}^t=\bar{\m{g}}^t,
\end{equation} 
which shows the average of $\m{v}_i^t$ over $i$ is the same as the local gradient average.
With this udpation of $\m{v}_i^t$, our new decentralized CG (NDCG) algorithm is given in Alg.~\ref{alg:Framwork+}.
In NDCG, we propose a new PRP-like CG parameter with respect to node $i$ as
\begin{equation}\label{new_beta}
	\beta_i^{t+1}=\frac{(\tilde{\m{v}}^{t+1}_i)\tr ({\m{g}}^{t+1}_i-{\m{g}}^t_i)}{\Vert \tilde{\m{v}}^t_i \Vert ^2},
\end{equation}
where $\tilde{\m{v}}_i^{t} = \m{v}_i^{t}+\frac{1}{\alpha}(\m{x}_i^{t}-\sum_{j \in \mathcal{N}_i} \tilde{W}_{i j} \m{x}_j^{t})$.
Combining $\beta_i^{t+1}$ with $\tilde{\m{v}}_i^{t+1}$, in NDCG we propose the conjugate search direction as 
\begin{equation}\label{new_dre}
	\tilde{\m{d}}_i^{t+1}=-\tilde{\m{v}}_i^{t+1}+\beta_i^{t+1} \tilde{\m{d}}_i^{t}.
\end{equation}
which has the PRP-type property that  when small steps are generated, $\beta_i^t$ will become small and 
$\tilde{\m{d}}_i^t$ will tend to  $-\tilde{\m{v}}_i^t$.

\begin{algorithm}[htb]
	\caption{NDCG with respect to node $i$}
	\label{alg:Framwork+}
	\begin{algorithmic}[1]
		\Require
		Initial point $\m{x}_i^0$,  Maximum iteration T, Stepsize $\alpha>0$,
		 Mixing matrix $\tilde{\m{W}}$.
		\State Set $t=0$,  $\m{v}_i^0=\m{g}_i^0$ and $\tilde{\m{d}}_i^{0}=-\tilde{\m{v}}_i^{0}=-{\m{g}}_i^{0}-\frac{1}{\alpha}(\m{x}_i^{0}-\sum_{j \in \mathcal{N}_i} \tilde{W}_{i j} \m{x}_j^{0})$.
		\State If $t \geq T$, stop.
		\State $\m{x}_i^{t+1}=\m{x}_i^{t}+\alpha \tilde{\m{d}}_i^{t}$.
		\State 	$	\m{v}_i^{t+1}=\sum_{j \in \mathcal{N}_i}\tilde{W}_{ij}(\m{v}_j^{t}+\m{g}_j^{t+1}-\m{g}_j^{t})$.
		\State $\tilde{\m{v}}_i^{t+1} = \m{v}_i^{t+1}+\frac{1}{\alpha}(\m{x}_i^{t+1}-\sum_{j \in \mathcal{N}_i} \tilde{W}_{i j} \m{x}_j^{t+1})$.
		\State $\beta_i^{t+1}=\frac{(\tilde{\m{v}}^{t+1}_i)\tr ({\m{g}}^{t+1}_i-{\m{g}}^t_i)}{\Vert \tilde{\m{v}}^t_i \Vert ^2}$.
		\State $\tilde{\m{d}}_i^{t+1}=-\tilde{\m{v}}_i^{t+1}+\beta_i^{t+1} \tilde{\m{d}}_i^{t}$. 
		\State Set $t=t+1$ and go to Step 2.
		\Ensure
		$\m{x}^T$.
	\end{algorithmic}
\end{algorithm}
\begin{remark} For the NDCG Alg.~\ref{alg:Framwork+}, we have the following comments: 
	\begin{itemize}
		\item[(a)] 
                 We can rewrite the update of $\m{x}^t$ in NDCG as
		\begin{eqnarray}\label{ndcg-moment}
			\m{x}^{t+1}=\m{W}\m{x}^{t}-\alpha\m{v}^t+\bm{\beta}^t(\m{x}^t-\m{x}^{t-1}),			
		\end{eqnarray}
                where $\bm{\beta}^t$ is a proper diagnal matrix given by $\beta_i^t$.
                Then, NDCG can be viewed as a decentralized heavy ball method with an adaptive momentum parameter and 
                is equivalent to the ABm method \cite{xin2019distributed} when $\bm{\beta}^t$ is fixed. 
               If $\beta_i^t=0$ for all $i$ and $t$, the update of $\m{x}^t$ is reduced as
			$\m{x}^{t+1}=\m{W}\m{x}^{t}-\alpha\m{v}^t$,
		which is equivalent to some semi-ATC GT method \cite{hong2022divergence,alghunaim2022unified}.
		 \item[(b)] 
		In the case $n=1$, we have $\tilde{\m{W}}=[1]$. The updates of NDCG would become
		\begin{align*}
			\m{x}_1^{t+1}&=\m{x}_1^{t}+\alpha \tilde{\m{d}}_1^{t} \quad \mbox{ and }  \quad  \tilde{\m{d}}_1^{t+1} =-{\m{g}}_1^{t+1}+\beta_1^{t+1} \tilde{\m{d}}_1^{t},\\
			\beta_1^{t+1}&=\frac{({\m{g}}^{t+1}_1)\tr ({\m{g}}^{t+1}_1-{\m{g}}^t_1)}{\Vert {\m{g}}^t_1 \Vert ^2},
		\end{align*}
		which is exactly the PRP nonlinear CG method proposed by Dai \cite{dai2011convergence}.
	\end{itemize}
\end{remark}

\subsubsection{Convergence of NDCG}
We now analyze the global convergence of NDCG for minimizing \eqref{obj_fun1}, where the local objective function $f_i$, $i=1,\ldots,n$,
is Lipschitz continuously differentiable, but possibly nonconvex. 
Recall that
$ \overline{\nabla} f(\m{x}^t)=\frac{1}{n} \sum_{i=1}^n \nabla f_i(\m{x}_i^t)$.
Then, it holds that $\m{1}\otimes\overline{\nabla} f({\m{x}}^t)=\m{M}{\m{g}}^t$. 
Note that if $\m{x}^* \in \R^{np}$ satisfies 
\begin{equation}\label{stationarity}
	\|\overline{\nabla} f(\m{x}^*)\|^2+\|\m{x}^*-\m{M}\m{x}^*\|^2=0,
\end{equation}
then we have $\m{x}^*_1= \ldots = \m{x}^*_i =: \m{z}^* $ and $\m{z}^* $ would be a first-order stationary point of  \eqref{obj_fun1}.
Since $\tilde{\m{v}}^t=\m{v}^t+\frac{1}{\alpha}(\m{I}-\m{W})\m{x}^t$ and $\m{M}\m{v}^t=\m{M}\m{g}^t$ by  \eqref{vequalg}, which implies $\m{M}\tilde{\m{v}}^t=\m{M}\m{g}^t$ due to $\m{M}(\m{I}-\m{W})=\m{0}$, we have 
\begin{align*}
	\|\m{1}\otimes\overline{\nabla} f({\m{x}}^t)\|=\|\m{M}\m{g}^t\|=\|\m{M}\tilde{\m{v}}^t\|\leq \|\tilde{\m{v}}^t\|.
\end{align*}
Hence, by \eqref{stationarity}, we say $\m{x}^t$ is an $\epsilon$-stationary solution for some $\epsilon >0$ if
\begin{equation}\label{eps-stationary}
 \|\tilde{\m{v}}^t\|^2+\|\m{x}^t-\m{M}\m{x}^t\|^2 \le \epsilon.
\end{equation}
Given any $\epsilon >0$, to show NDCG will generate a ($\m{x}^t, \tilde{\m{v}}^t)$ satisfying \eqref{eps-stationary},
we define the following potential function 
\begin{equation}\label{Potential-P}
	P(\m{x}^{t},\m{v}^{t})=F(\bar{\m{x}}^t)+\frac{1}{2\alpha n} \|\m{x}^{t}\|^2_{\m{I}-\m{W}}+\|\m{x}^{t}-\m{M}\m{x}^{t}\|^2+\|\m{v}^{t}-\m{M}\m{v}^{t}\|^2.
\end{equation}
Note $F(\bar{\m{x}}^t)$ is the objective function value of  \eqref{obj_fun1} at $\bar{\m{x}}^t$,
$\|\m{x}^{t}-\m{M}\m{x}^{t}\|^2$ represents the consensus error at $\m{x}^t$,
$ \|\m{x}^{t}\|^2_{\m{I}-\m{W}}$  describes the distance between $\m{x}_i^t$ and its neighbor's average,
and $\|\m{v}^{t}-\m{M}\m{v}^{t}\|^2$ gives the distance between the average gradient approximation $\m{v}^{t}$ with
 the average gradient $\m{M}\m{g}^t=\m{M}\m{v}^t$, called gradient tracking error. 
Obviously, $P$ is bounded below since each $f_i$ is bounded below by Assumption~\ref{as0}. 
We would show a sufficient decrease property of $P(\m{x}^{t},\m{v}^{t})$. 
We first deduce the bounds for $ -(\tilde{\m{v}}_i^t) \tr \tilde{\m{d}}_i^t $ and 
$\|\tilde{\m{d}}_i^t\|$ by applying the following \cite[Lemma 2.2.]{dai2011convergence}.
\begin{lemma}\label{lem2.5a}
	 Assume $\alpha$ is some constant in $(0,\frac{1}{4L}]$. Define the sequence $\left\{\xi^t\right\}$ as
 \begin{equation}\label{def-xi}
	\xi^0=1 ; \quad \xi^{t+1}=1+L \alpha (\xi^t)^2, \quad t \geq 0 .
 \end{equation}
	Then, we have $ 1 \leq \xi^t<c $ for all $ t \geq 0$,
	where $ c \in (0,2] $ is a constant given as
\begin{equation}\label{const-c}
	c=2\left(1+\sqrt{1-4L \alpha}\right)^{-1}.
\end{equation}
\end{lemma}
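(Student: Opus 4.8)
The plan is to analyze the recursion $\xi^{t+1} = 1 + L\alpha(\xi^t)^2$ by comparing it with the fixed-point equation of the map $\phi(s) = 1 + L\alpha s^2$. First I would observe that $\phi$ is increasing on $[0,\infty)$ and that its fixed points solve $L\alpha s^2 - s + 1 = 0$, whose discriminant is $1 - 4L\alpha \ge 0$ precisely because $\alpha \le \tfrac{1}{4L}$. The smaller root is $s^\star = \dfrac{1 - \sqrt{1-4L\alpha}}{2L\alpha}$, and by rationalizing the numerator (multiply top and bottom by $1 + \sqrt{1-4L\alpha}$) one sees $s^\star = \dfrac{2}{1 + \sqrt{1-4L\alpha}} = c$. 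Since $0 < 4L\alpha \le 1$ forces $0 \le \sqrt{1-4L\alpha} < 1$, we get $c \in (1,2]$, which already gives the stated range $c \in (0,2]$.

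Next I would show by induction that $1 \le \xi^t < c$ for all $t \ge 0$. The base case $\xi^0 = 1 < c$ holds since $c > 1$. For the inductive step, assume $1 \le \xi^t < c$. Then $\xi^{t+1} = \phi(\xi^t) \ge \phi(1) = 1 + L\alpha \ge 1$ because $\phi$ is increasing and $\phi(1) = 1 + L\alpha > 1$; and $\xi^{t+1} = \phi(\xi^t) < \phi(c) = c$ using monotonicity of $\phi$ together with the fact that $c$ is a fixed point, $\phi(c) = c$. This closes the induction and yields $1 \le \xi^t < c \le 2$ for all $t$.

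I do not expect any serious obstacle here; the only mild care needed is the algebraic identification of the smaller root of the quadratic with the closed form \eqref{const-c} for $c$, and checking that the hypothesis $\alpha \in (0, \tfrac{1}{4L}]$ is exactly what makes the discriminant nonnegative so that the relevant fixed point is real. One should also note that when $\alpha = \tfrac{1}{4L}$ the two roots coincide at $c = 2$, so the strict inequality $\xi^t < c$ still holds for every finite $t$ because $\xi^0 = 1 < 2$ and strict monotonicity propagates it; this edge case is worth a sentence to avoid confusion, but it does not change the argument.
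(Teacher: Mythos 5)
Your proof is correct. The paper does not prove this lemma itself --- it imports it verbatim as Lemma~2.2 of Dai's paper \cite{dai2011convergence} --- and your argument (identify $c$ as the smaller fixed point of the increasing map $s \mapsto 1 + L\alpha s^2$, then propagate $1 \le \xi^t < c$ by induction using monotonicity and $\phi(c)=c$) is exactly the standard proof of that cited result, including the correct handling of the boundary case $\alpha = \tfrac{1}{4L}$.
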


\begin{lemma}\label{lem2.5}
	Suppose Assumption \ref{as0} holds. Consider the NDCG algorithm with  $ \alpha \in(0,\frac{1}{4L}]$. Then, for all $t \geq 0$ and any $i$, we have
	\begin{align}
		&\left(2-\xi^t\right)\left\|\tilde{\m{v}}_i^t\right\|^2  \leq -(\tilde{\m{v}}_i^t)\tr\tilde{\m{d}}_i^t \leq \xi^t\left\|\tilde{\m{v}}_i^t\right\|^2, \label{descent1}\\
		&\left(2-\xi^t\right)\left\|\tilde{\m{v}}_i^t\right\| \leq\left\|\tilde{\m{d}}_i^t\right\| \leq \xi^t\left\|\tilde{\m{v}}_i^t\right\|\label{descent2},
	\end{align}
	where $\xi^t$ is the sequence defined in \eqref{def-xi}.
\end{lemma}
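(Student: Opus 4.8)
The plan is to mimic the proof of \cite[Lemma 2.2]{dai2011convergence} by induction on $t$, using the recursion \eqref{new_dre} for $\tilde{\m{d}}_i^{t}$ together with the PRP-type formula \eqref{new_beta} for $\beta_i^{t+1}$. The base case $t=0$ is immediate: since $\tilde{\m{d}}_i^0=-\tilde{\m{v}}_i^0$ and $\xi^0=1$, all four inequalities in \eqref{descent1}--\eqref{descent2} hold with equality. For the inductive step, assume the bounds hold at step $t$; I will derive them at step $t+1$.

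First I would control the conjugate parameter. From \eqref{new_beta} and the Lipschitz condition \eqref{3.1} applied to each $f_i$ (via $\|\m{g}_i^{t+1}-\m{g}_i^t\|=\|\nabla f_i(\m{x}_i^{t+1})-\nabla f_i(\m{x}_i^t)\|\le L\|\m{x}_i^{t+1}-\m{x}_i^t\|=L\alpha\|\tilde{\m{d}}_i^t\|$, using Step 3 of Alg.~\ref{alg:Framwork+}), Cauchy--Schwarz gives
\[
|\beta_i^{t+1}|\;\le\;\frac{\|\tilde{\m{v}}_i^{t+1}\|\,\|\m{g}_i^{t+1}-\m{g}_i^t\|}{\|\tilde{\m{v}}_i^t\|^2}
\;\le\;\frac{L\alpha\,\|\tilde{\m{v}}_i^{t+1}\|\,\|\tilde{\m{d}}_i^t\|}{\|\tilde{\m{v}}_i^t\|^2}
\;\le\;\frac{L\alpha\,\xi^t\,\|\tilde{\m{v}}_i^{t+1}\|}{\|\tilde{\m{v}}_i^t\|},
\]
where the last step uses the inductive upper bound $\|\tilde{\m{d}}_i^t\|\le\xi^t\|\tilde{\m{v}}_i^t\|$ from \eqref{descent2}. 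Hence $|\beta_i^{t+1}|\,\|\tilde{\m{d}}_i^t\|\le L\alpha(\xi^t)^2\|\tilde{\m{v}}_i^{t+1}\|=(\xi^{t+1}-1)\|\tilde{\m{v}}_i^{t+1}\|$ by the definition \eqref{def-xi} of $\xi^{t+1}$. Next, taking the inner product of \eqref{new_dre} with $-\tilde{\m{v}}_i^{t+1}$ yields $-(\tilde{\m{v}}_i^{t+1})\tr\tilde{\m{d}}_i^{t+1}=\|\tilde{\m{v}}_i^{t+1}\|^2-\beta_i^{t+1}(\tilde{\m{v}}_i^{t+1})\tr\tilde{\m{d}}_i^t$, and bounding the cross term by $|\beta_i^{t+1}|\,\|\tilde{\m{v}}_i^{t+1}\|\,\|\tilde{\m{d}}_i^t\|\le(\xi^{t+1}-1)\|\tilde{\m{v}}_i^{t+1}\|^2$ gives both sides of \eqref{descent1} at $t+1$:
\[
(2-\xi^{t+1})\|\tilde{\m{v}}_i^{t+1}\|^2\;\le\;-(\tilde{\m{v}}_i^{t+1})\tr\tilde{\m{d}}_i^{t+1}\;\le\;\xi^{t+1}\|\tilde{\m{v}}_i^{t+1}\|^2.
\]
Finally, for \eqref{descent2} at $t+1$, the upper bound follows from the triangle inequality $\|\tilde{\m{d}}_i^{t+1}\|\le\|\tilde{\m{v}}_i^{t+1}\|+|\beta_i^{t+1}|\,\|\tilde{\m{d}}_i^t\|\le\xi^{t+1}\|\tilde{\m{v}}_i^{t+1}\|$, and the lower bound follows because $\|\tilde{\m{v}}_i^{t+1}\|\,\|\tilde{\m{d}}_i^{t+1}\|\ge -(\tilde{\m{v}}_i^{t+1})\tr\tilde{\m{d}}_i^{t+1}\ge(2-\xi^{t+1})\|\tilde{\m{v}}_i^{t+1}\|^2$ by Cauchy--Schwarz and the left inequality of \eqref{descent1} just proved (dividing by $\|\tilde{\m{v}}_i^{t+1}\|$; the degenerate case $\tilde{\m{v}}_i^{t+1}=\m{0}$ is trivial).

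The only delicate point is the interplay of the indices: the bound on $|\beta_i^{t+1}|$ requires the inductive hypothesis \emph{at step $t$} (to bound $\|\tilde{\m{d}}_i^t\|$), while the conclusions are \emph{at step $t+1$}, so one must be careful that $\xi^{t+1}\le c\le 2$ (from Lemma~\ref{lem2.5a}) keeps the coefficients $2-\xi^{t+1}\ge 0$, so that the lower bounds are not vacuous — this is exactly why the stepsize restriction $\alpha\le\frac{1}{4L}$ is imposed. Everything else is routine use of Cauchy--Schwarz, the triangle inequality, and the Lipschitz estimate $\|\m{g}_i^{t+1}-\m{g}_i^t\|\le L\alpha\|\tilde{\m{d}}_i^t\|$.
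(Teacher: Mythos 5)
Your proposal is correct and follows essentially the same induction as the paper: both arguments bound the same quantity $\|\beta_i^{t+1}\tilde{\m{d}}_i^t\| = \|\tilde{\m{d}}_i^{t+1}+\tilde{\m{v}}_i^{t+1}\|$ by $L\alpha(\xi^t)^2\|\tilde{\m{v}}_i^{t+1}\| = (\xi^{t+1}-1)\|\tilde{\m{v}}_i^{t+1}\|$ via Cauchy--Schwarz, the Lipschitz estimate, and the inductive hypothesis, and then read off \eqref{descent1}--\eqref{descent2} at $t+1$. The only cosmetic difference is that you obtain the lower bound in \eqref{descent2} from the just-proved left inequality of \eqref{descent1} via Cauchy--Schwarz, while the paper uses the reverse triangle inequality directly; both are valid.
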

\begin{proof}
	Since $\tilde{\m{d}}_i^0=-\tilde{\m{v}}_i^0$ and $\xi^0=1$, \eqref{descent1} and \eqref{descent2} clearly hold for $t=0$. 
Assume that \eqref{descent1} and \eqref{descent2} hold for some $t$. Then by the Cauchy-Schwartz inequality,  $\m{x}_i^{t+1}=\m{x}_i^{t}+\alpha \tilde{\m{d}}_i^{t}$ and \eqref{3.1}, 
we have
	\begin{align}\label{lem2.5_eq1}
		\|\tilde{\m{d}}_i^{t+1}+\tilde{\m{v}}_i^{t+1}\|=&\|\beta_i^{t+1} \tilde{\m{d}}_i^{t}\|
		\leq \frac{\|{\m{g}}^{t+1}_i-{\m{g}}^{t}_i\| \|\tilde{\m{v}}^{t+1}_i\|}{\|\tilde{\m{v}}^{t}_i\|^2}\|\tilde{\m{d}}_i^{t}\| \\
		\leq &\frac{L\alpha \|\tilde{\m{d}}_i^{t}\|^2}{\|\tilde{\m{v}}^{t}_i\|^2}\|\tilde{\m{v}}^{t+1}_i\|
		\leq L\alpha (\xi^t)^2\|\tilde{\m{v}}^{t+1}_i\|.\notag
	\end{align}
	By the triangular inequality and the above relation, we obtain
	\begin{align*}
		\|\tilde{\m{d}}_i^{t+1}\|\leq \|\tilde{\m{v}}_i^{t+1}\|+\|\tilde{\m{d}}_i^{t+1}+\tilde{\m{v}}_i^{t+1}\|\leq \left(1+L\alpha (\xi^t)^2\right)\|\tilde{\m{v}}^{t+1}_i\|
	\end{align*}
	and
	\begin{align*}
		\|\tilde{\m{d}}_i^{t+1}\|\geq \|\tilde{\m{v}}_i^{t+1}\|-\|\tilde{\m{d}}_i^{t+1}+\tilde{\m{v}}_i^{t+1}\|
		\geq \left(1-L\alpha (\xi^t)^2\right)\|\tilde{\m{v}}^{t+1}_i\|.
	\end{align*}
	Then, it follows from the recursion  \eqref{def-xi} of $\xi^t$ and the choice of $\alpha$ that \eqref{descent2} holds for $t+1$. 
       By \eqref{lem2.5_eq1} and the Cauchy-Schwartz inequality, we have that
	\begin{align*}
		\vert (\tilde{\m{v}}_i^{t+1})\tr\tilde{\m{d}}_i^{t+1}+\|\tilde{\m{v}}_i^{t+1}\|^2 \vert 
		\leq \|\tilde{\m{v}}_i^{t+1}\| \|\tilde{\m{d}}_i^{t+1}+\tilde{\m{v}}_i^{t+1}\|
		\leq  L\alpha (\xi^t)^2\|\tilde{\m{v}}^{t+1}_i\|^2.
	\end{align*}
	Similarly, by the recursion  \eqref{def-xi} of $\xi^t$ and the choice of $\alpha$, \eqref{descent1} holds for $t+1$.
\end{proof}

We now derive a recursion relationship of 
$F(\bar{\m{x}}^t)+\frac{1}{2\alpha n} \|\m{x}^{t}\|^2_{\m{I}-\m{W}}$.
\begin{lemma}
	Suppose Assumption~\ref{as0} holds. If $ \alpha \in(0,\frac{2\sqrt{5}-4}{5L}]$, then for all $t \geq 0$ we have
	\begin{align}
		F(\bar{\m{x}}^{t+1})+\frac{1}{2\alpha n} \|\m{x}^{t+1}\|^2_{\m{I}-\m{W}}\leq& F(\bar{\m{x}}^t)+\frac{1}{2\alpha n} \|\m{x}^{t}\|^2_{\m{I}-\m{W}}-\left(\frac{\alpha(5-2\sqrt{5})}{4 n}-\frac{5L\alpha^2}{8n}\right)\|\tilde{\m{v}}^t\|^2 \notag \\
&+\frac{16L^2\alpha}{n}\|\m{x}^t-\m{M}\m{x}^t\|^2+\frac{16\alpha}{n}\|{\m{v}}^t-\m{M}{\m{v}}^t\|^2. \label{term1}
	\end{align} 
\end{lemma}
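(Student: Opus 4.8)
The plan is to estimate the one‑step change of $F(\bar{\m x}^t)$ by $L$‑smoothness and of $\frac1{2\alpha n}\|\m x^t\|_{\m I-\m W}^2$ by an exact quadratic expansion, add the two, and then reorganize the resulting cross terms using Lemma~\ref{lem2.5}, the bound on $\xi^t$ from Lemma~\ref{lem2.5a}, the mixing inequalities of Lemma~\ref{property W}, and Young's inequality (Lemma~\ref{young}). First I would record the facts that drive the computation. Set $\m e^t:=\tilde{\m v}^t-\m v^t=\frac1\alpha(\m I-\m W)\m x^t$; since $\m x^{t+1}=\m x^t+\alpha\tilde{\m d}^t$ and $\m M\m W=\m M$, one gets $\bar{\m x}^{t+1}-\bar{\m x}^t=\alpha\overline{\tilde{\m d}}^t$, and by \eqref{vequalg} we have $\m M\tilde{\m v}^t=\m M\m v^t=\m M\m g^t$, hence $\overline{\tilde{\m v}}^t=\overline{\nabla}f(\m x^t)$, $\m M\m e^t=\m 0$, and also $n(\overline{\tilde{\m v}}^t)\tr\overline{\tilde{\m d}}^t=(\m M\tilde{\m v}^t)\tr\tilde{\m d}^t$. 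Because $\alpha\le\frac{2\sqrt5-4}{5L}\le\frac1{4L}$, Lemmas~\ref{lem2.5a} and~\ref{lem2.5} apply; moreover for this range of $\alpha$ the constant $c$ of \eqref{const-c} satisfies $c\le\frac{\sqrt5}{2}$, so $1\le\xi^t<\frac{\sqrt5}{2}$, and by \eqref{def-xi} also $\xi^t-1=L\alpha(\xi^{t-1})^2\le\frac{5L\alpha}{4}$ for $t\ge1$ (with $\xi^0-1=0$).

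Next, $L$‑smoothness of $F$ (a consequence of Assumption~\ref{as0}) together with $\bar{\m x}^{t+1}-\bar{\m x}^t=\alpha\overline{\tilde{\m d}}^t$ and the identities above gives
\[
nF(\bar{\m x}^{t+1})\le nF(\bar{\m x}^t)+\alpha(\m M\tilde{\m v}^t)\tr\tilde{\m d}^t+n\alpha\big(\nabla F(\bar{\m x}^t)-\overline{\nabla}f(\m x^t)\big)\tr\overline{\tilde{\m d}}^t+\tfrac{nL\alpha^2}{2}\|\overline{\tilde{\m d}}^t\|^2,
\]
where $\|\nabla F(\bar{\m x}^t)-\overline{\nabla}f(\m x^t)\|\le\frac{L}{\sqrt n}\|\m x^t-\m M\m x^t\|$ by \eqref{3.1} and Lemma~\ref{important}, and, using $\|\overline{\tilde{\m d}}^t\|^2\le\frac1n\|\tilde{\m d}^t\|^2$ and $\|\tilde{\m d}^t\|^2\le(\xi^t)^2\|\tilde{\m v}^t\|^2<\frac54\|\tilde{\m v}^t\|^2$ from \eqref{descent2}, one has $\frac{nL\alpha^2}{2}\|\overline{\tilde{\m d}}^t\|^2\le\frac{5L\alpha^2}{8}\|\tilde{\m v}^t\|^2$ — this is exactly the $\frac{5L\alpha^2}{8n}\|\tilde{\m v}^t\|^2$ in \eqref{term1}. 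For the quadratic term, $\m x^{t+1}=\m x^t+\alpha\tilde{\m d}^t$ and $(\m I-\m W)\m x^t=\alpha\m e^t$ yield the exact identity $\frac1{2\alpha}\|\m x^{t+1}\|_{\m I-\m W}^2=\frac1{2\alpha}\|\m x^t\|_{\m I-\m W}^2+\alpha(\m e^t)\tr\tilde{\m d}^t+\frac\alpha2(\tilde{\m d}^t)\tr(\m I-\m W)\tilde{\m d}^t$. Adding the two estimates and dividing by $n$, the $\frac1{2\alpha n}\|\m x^t\|_{\m I-\m W}^2$ terms reproduce the one in \eqref{term1}, and it remains to bound $(\m M\tilde{\m v}^t)\tr\tilde{\m d}^t+(\m e^t)\tr\tilde{\m d}^t+\frac12(\tilde{\m d}^t)\tr(\m I-\m W)\tilde{\m d}^t$ (up to the factor $\alpha$) plus the two already‑controlled quantities above.

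The core of the argument is the reorganization of $(\m M\tilde{\m v}^t)\tr\tilde{\m d}^t+(\m e^t)\tr\tilde{\m d}^t+\frac12(\tilde{\m d}^t)\tr(\m I-\m W)\tilde{\m d}^t$. Writing $\tilde{\m d}^t=-\tilde{\m v}^t+(\tilde{\m d}^t+\tilde{\m v}^t)$ with $\|\tilde{\m d}^t+\tilde{\m v}^t\|\le L\alpha c^2\|\tilde{\m v}^t\|$ (from the proof of Lemma~\ref{lem2.5}), using $\m M(\m I-\m W)=\m 0$ (Lemma~\ref{property W}) to project every $(\m I-\m W)$‑factor onto $\mbox{Null}(\m M)^{\perp}$, where $\|\m I-\m W\|\le1+\sigma<2$ and $\rho(\m W-\m M)=\sigma<1$, and substituting $\tilde{\m v}^t-\m M\tilde{\m v}^t=(\m v^t-\m M\m v^t)+\m e^t$, the three quantities separate into: (i) the dominant term $(\tilde{\m v}^t)\tr\tilde{\m d}^t\le-(2-\xi^t)\|\tilde{\m v}^t\|^2<-(2-\tfrac{\sqrt5}{2})\|\tilde{\m v}^t\|^2$ by \eqref{descent1}; (ii) genuinely $O(\alpha)$‑small cross terms among $\|\tilde{\m v}^t\|$, $\|\m v^t-\m M\m v^t\|$ and $\|\m x^t-\m M\m x^t\|$, the smallness coming from $\xi^t-1=O(\alpha)$ and $\|\tilde{\m d}^t+\tilde{\m v}^t\|=O(\alpha)\|\tilde{\m v}^t\|$; and (iii) terms of the form $\frac1\alpha(\m c_x^t)\tr\m Q\,\m c_x^t$ with $\m c_x^t:=\m x^t-\m M\m x^t$ and $\m Q$ a polynomial in $\m I-\m W$. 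Part (iii) is the main obstacle: each such term is $\Theta(\alpha^{-1})$ and cannot be absorbed into the $\frac{16L^2\alpha}{n}\|\m x^t-\m M\m x^t\|^2$ of \eqref{term1} on its own, so one must check that they \emph{telescope} to $-\frac1{2\alpha}(\m c_x^t)\tr(\m I-\m W)^2(\m I+\m W)\m c_x^t\le0$, where nonpositivity uses $\sigma<1$ through $\m I+\m W$ having eigenvalues $\ge1-\sigma$ on $\mbox{Null}(\m M)^{\perp}$. Granting this cancellation, one applies Young's inequality to the $O(\alpha)$ terms of (ii): the bound $2-\xi^t>2-\tfrac{\sqrt5}{2}=\tfrac{4-\sqrt5}{2}$ leaves a slack $\tfrac{4-\sqrt5}{2}-\tfrac{5-2\sqrt5}{4}=\tfrac34$ which, for a suitable choice of the Young parameters, absorbs all $\|\tilde{\m v}^t\|^2$‑contributions of (ii) and leaves the coefficient $-\big(\frac{\alpha(5-2\sqrt5)}{4n}-\frac{5L\alpha^2}{8n}\big)$, while the residual $\|\m x^t-\m M\m x^t\|^2$ and $\|\m v^t-\m M\m v^t\|^2$ coefficients come out bounded by $\frac{16L^2\alpha}{n}$ and $\frac{16\alpha}{n}$ respectively, the $16$ arising from repeated use of $1+\sigma<2$. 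I expect the bulk of the work to be (a) verifying the sign of the aggregated $\Theta(\alpha^{-1})$ quadratic form, and (b) juggling the Young parameters so that all three target coefficients in \eqref{term1} are met simultaneously.
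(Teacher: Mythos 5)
Your setup (the $L$-smoothness expansion of $F(\bar{\m x}^t)$, the quadratic expansion of $\frac{1}{2\alpha n}\|\m x^t\|^2_{\m I-\m W}$, the identification of the dissimilarity term bounded via $\|\nabla F(\bar{\m x}^t)-\overline{\nabla}f(\m x^t)\|\le \frac{L}{\sqrt n}\|\m x^t-\m M\m x^t\|$, and the $\frac{5L\alpha^2}{8n}\|\tilde{\m v}^t\|^2$ remainder) is correct and is exactly the paper's route. The gap is in your third paragraph, the step you yourself call the core. The clean identity here is $(\m M\tilde{\m v}^t)\tr\tilde{\m d}^t+(\m e^t)\tr\tilde{\m d}^t=(\tilde{\m v}^t)\tr\tilde{\m d}^t-(\m v^t-\m M\m v^t)\tr\tilde{\m d}^t$ (since $\m M\tilde{\m v}^t=\m M\m v^t$ and $\m e^t=\tilde{\m v}^t-\m v^t$); the paper applies \eqref{descent1} to the first summand and Young's inequality with parameter $\Theta(\alpha)$ to the second (whose second factor $\m x^{t+1}-\m x^t=\alpha\tilde{\m d}^t$ carries a factor $\alpha$), and bounds the quadratic remainder crudely by $\rho(\m I-\m W)\,\|\tilde{\m d}^t\|^2\le \rho(\m I-\m W)(\xi^t)^2\|\tilde{\m v}^t\|^2$. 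No further decomposition is needed, and in particular $\tilde{\m v}^t$ is never split into $\m v^t+\m e^t$.

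Your proposed reorganization is internally inconsistent and, in its consistent reading, fails. Items (i) and (iii) double-count: the nonpositive aggregate $-\frac{1}{2\alpha}(\m c_x^t)\tr(\m I-\m W)^2(\m I+\m W)\m c_x^t$ you claim in (iii) can only be produced by extracting $-\|\m e^t\|^2$ from $(\m e^t)\tr\tilde{\m d}^t$, but that same quantity is already locked inside the term $(\tilde{\m v}^t)\tr\tilde{\m d}^t$ you keep intact in (i); once you bound (i) by $-(2-\xi^t)\|\tilde{\m v}^t\|^2$, the remaining pure-$\m e^t$ contribution of $\frac12(\tilde{\m d}^t)\tr(\m I-\m W)\tilde{\m d}^t$ is $+\frac{1}{2\alpha^2}(\m c_x^t)\tr(\m I-\m W)^3\m c_x^t\ge 0$ with nothing left to cancel it. If instead you fully expand (so the telescoping in (iii) is available), then item (ii) is false: the expansion produces the cross term $-\frac1n(\m c_x^t)\tr(\m I-\m W)\m W(\m v^t-\m M\m v^t)$, which carries no factor of $\alpha$ — the $\alpha^{-1}$ in $\m e^t$ cancels the overall $\alpha$ — and is not multiplied by $\xi^t-1$ or by $\|\tilde{\m d}^t+\tilde{\m v}^t\|$. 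A $\Theta(1)$ indefinite cross term between $\m x^t-\m M\m x^t$ and $\m v^t-\m M\m v^t$ cannot be absorbed into the coefficients $\frac{16L^2\alpha}{n}$ and $\frac{16\alpha}{n}$ of \eqref{term1} for small $\alpha$ by any choice of Young parameters, so this route cannot establish the stated bound on the whole interval $\alpha\in(0,\frac{2\sqrt5-4}{5L}]$. The fix is to abandon the expansion of $\tilde{\m v}^t$ and follow the grouping above, after which the choice $b=d=32\alpha$ in the two Young steps yields precisely the three coefficients in \eqref{term1}.
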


\begin{proof}
	By Assumption \ref{as0},  $F$ is Lipschitz continuously differentiable with Lipschitz constant $L$, which gives
	\begin{align*}
		F(\bar{\m{x}}^{t+1})\leq F(\bar{\m{x}}^t)+\Big\langle\nabla F(\bar{\m{x}}^t),\bar{\m{x}}^{t+1}-\bar{\m{x}}^{t}\Big\rangle+\frac{L}{2}\|\bar{\m{x}}^{t+1}-\bar{\m{x}}^{t}\|^{2}.
	\end{align*}
	Since $\rho(\m{I}-\m{W})<2$, we  have
	\begin{align*}
		\frac{1}{2\alpha n} \|\m{x}^{t+1}\|^2_{\m{I}-\m{W}} \leq \frac{1}{2\alpha n} \|\m{x}^{t}\|^2_{\m{I}-\m{W}}+\frac{1}{n} \Big\langle \frac{1}{\alpha} (\m{I}-\m{W})\m{x}^t, \m{x}^{t+1}-\m{x}^t\Big\rangle + \frac{1}{2\alpha n}\|\m{x}^{t+1}-\m{x}^t\|^2.
	\end{align*}
In addition, we have
\begin{align*}
	&\frac{L}{2}\|\bar{\m{x}}^{t+1}-\bar{\m{x}}^{t}\|^{2}=\frac{L}{2n}\|\m{1}\otimes\bar{\m{x}}^{t+1}-\m{1}\otimes\bar{\m{x}}^{t}\|^{2}=\frac{L}{2n}\|\m{M}{\m{x}}^{t+1}-\m{M}{\m{x}}^{t}\|^{2}\leq\frac{L}{2n}\|{\m{x}}^{t+1}-{\m{x}}^{t}\|^{2}.
\end{align*}
	Adding the above three inequalities together and using Lemma~\ref{lem2.5} yields 
	\begin{eqnarray}\label{add}
	   & & F(\bar{\m{x}}^{t+1})+\frac{1}{2\alpha n} \|\m{x}^{t+1}\|^2_{\m{I}-\m{W}} \\
  &\leq& F(\bar{\m{x}}^t)+\frac{1}{2\alpha n} \|\m{x}^{t}\|^2_{\m{I}-\m{W}}+\left(\frac{L\alpha^2c^2}{2n}+\frac{\alpha c^2}{2 n}\right) \|\tilde{\m{v}}^t\|^2 + \mbox{RemainTerm} \notag 
	\end{eqnarray}
	where 
	\begin{align}
		&\mbox{RemainTerm} = \Big\langle\nabla F(\bar{\m{x}}^t),\bar{\m{x}}^{t+1}-\bar{\m{x}}^{t}\Big\rangle + \frac{1}{n} \Big\langle \frac{1}{\alpha} (\m{I}-\m{W})\m{x}^t, \m{x}^{t+1}-\m{x}^t\Big\rangle \notag\\
		=&\frac{1}{n} \sum_{i=1}^n \Big\langle\nabla F(\bar{\m{x}}^t),{\m{x}}_i^{t+1}-{\m{x}}_i^{t}\Big\rangle+\frac{1}{n} \sum_{i=1}^n \Big\langle \frac{1}{\alpha} (\m{x}_i^{t}-\sum_{j \in \mathcal{N}_i} \tilde{W}_{i j} \m{x}_j^{t}), \m{x}_i^{t+1}-\m{x}_i^t\Big\rangle \notag\\
		=&\underbrace{\frac{1}{n} \sum_{i=1}^n \Big\langle\tilde{\m{v}}_i^t,{\m{x}}_i^{t+1}-{\m{x}}_i^{t}\Big\rangle}_{\text {term (A)}}
		+\underbrace{\frac{1}{n} \sum_{i=1}^n \Big\langle\nabla F(\bar{\m{x}}^t)-\overline{\nabla} f({\m{x}}^t),{\m{x}}_i^{t+1}-{\m{x}}_i^{t}\Big\rangle}_{\text {term (B)}}\notag\\
		&+\underbrace{\frac{1}{n} \sum_{i=1}^n \Big\langle\overline{\nabla} f({\m{x}}^t)-{\m{v}}_i^t,{\m{x}}_i^{t+1}-{\m{x}}_i^{t}\Big\rangle}_{\text {term (C)}}. \label{terms}
	\end{align}
	For term (A), by Lemma~\ref{lem2.5} and $\m{x}_i^{t+1}=\m{x}_i^{t}+\alpha \tilde{\m{d}}_i^{t}$, we have
	\begin{align}\label{termA}
		\frac{1}{n} \sum_{i=1}^n \Big\langle\tilde{\m{v}}_i^t,{\m{x}}_i^{t+1}-{\m{x}}_i^{t}\Big\rangle \leq -\frac{\alpha(2-c)}{ n} \|\tilde{\m{v}}^t\|^2.
	\end{align}
	For term (B), by Young's inequality with constant $b >0$ and  $\m{1}\otimes\bar{\m{x}}^t=\m{M}{\m{x}}^t$, we have 
	\begin{eqnarray}\label{termB}
	  &&	\frac{1}{n} \sum_{i=1}^n \Big\langle\nabla F(\bar{\m{x}}^t)-\overline{\nabla} f({\m{x}}^t),{\m{x}}_i^{t+1}-{\m{x}}_i^{t}\Big\rangle \\
  &\leq& \frac{b}{2}\|\nabla F(\bar{\m{x}}^t)-\overline{\nabla} f({\m{x}}^t)\|^2+\frac{1}{2nb}\|{\m{x}}^{t+1}-{\m{x}}^{t}\|^2 \notag\\
		&\leq & \frac{L^2b}{2n}\|\m{1}\otimes\bar{\m{x}}^t-\m{x}^t \|^2+\frac{2\alpha^2}{nb}\|\tilde{\m{v}}^t\|^2
	   =\frac{L^2b}{2n}\|\m{M}\m{x}^t-\m{x}^t \|^2+\frac{2\alpha^2}{nb}\|\tilde{\m{v}}^t\|^2,\notag
	\end{eqnarray}
where the last inequality uses Lemma~\ref{important}, Lemma~\ref{lem2.5},  $L$-Lipschitz continuity of $\nabla F$.
	For term (C), by Young's inequality with constant $d>0$, we have
	\begin{align}\label{termC}
		\frac{1}{n} \sum_{i=1}^n \Big\langle\overline{\nabla} f({\m{x}}^t)-{\m{v}}_i^t,{\m{x}}_i^{t+1}-{\m{x}}_i^{t}\Big\rangle \leq &\frac{d}{2n}\|\m{1}\otimes\overline{\nabla} f({\m{x}}^t)-{\m{v}}^t\|^2+\frac{1}{2nd}\|{\m{x}}^{t+1}-{\m{x}}^{t}\|^2 \notag \\
		\leq & \frac{d}{2n}\|{\m{v}}^t-\m{M}{\m{v}}^t\|^2+\frac{2\alpha^2}{nd}\|\tilde{\m{v}}^t\|^2,
	\end{align}
where the last inequality is due to Lemma~\ref{lem2.5} and the relation $\m{1}\otimes\overline{\nabla} f({\m{x}}^t)=\m{M}{\m{v}}^t$ by \eqref{vequalg}.
	Substituting \eqref{termA}, \eqref{termB}, and \eqref{termC} into \eqref{terms}, we obtain
	\begin{align*}
		&\Big\langle\nabla F(\bar{\m{x}}^t),\bar{\m{x}}^{t+1}-\bar{\m{x}}^{t}\Big\rangle+\frac{1}{n} \Big\langle \frac{1}{\alpha} (\m{I}-\m{W})\m{x}^t, \m{x}^{t+1}-\m{x}^t\Big\rangle\\
		\leq &-\left(\frac{\alpha(2-c)}{ n}-\frac{2\alpha^2}{n}\left(\frac{1}{b}+\frac{1}{d}\right)\right)\|\tilde{\m{v}}^t\|^2
		+\frac{L^2b}{2n}\|\m{x}^t-\m{M}\m{x}^t\|^2+\frac{d}{2n}\|{\m{v}}^t-\m{M}{\m{v}}^t\|^2.
	\end{align*}
	By substituting the above inequality into \eqref{add}, it holds that
	\begin{eqnarray*}
	&& F(\bar{\m{x}}^{t+1})+\frac{1}{2\alpha n} \|\m{x}^{t+1}\|^2_{\m{I}-\m{W}} \\ 
      &\leq& F(\bar{\m{x}}^t)+\frac{1}{2\alpha n} \|\m{x}^{t}\|^2_{\m{I}-\m{W}}
	-\left(\frac{\alpha(4-2c-c^2)}{ 2n}-\frac{2\alpha^2}{n}\left(\frac{1}{b}+\frac{1}{d}\right)-\frac{L\alpha^2c^2}{2n}\right)\|\tilde{\m{v}}^t\|^2\\
	&&+\frac{L^2b}{2n}\|\m{x}^t-\m{M}\m{x}^t\|^2+\frac{d}{2n}\|{\m{v}}^t-\m{M}{\m{v}}^t\|^2.
\end{eqnarray*} 
Since $0<\alpha \leq \frac{2\sqrt{5}-4}{5L}$, we have from the definition of constant $c$ in \eqref{const-c} that 
\begin{equation}\label{new-bd-c}
1<c\leq \sqrt{5}/2, 
\end{equation}
 which implies $\frac{11}{4}-\sqrt{5}\leq 4-2c-c^2 \leq 1$. Then, setting $b=d=32\alpha$ yields \eqref{term1}.
\end{proof}

\begin{theorem}\label{nonconvex_convergence}
	Suppose Assumption \ref{as0} holds. If
	\begin{align}\label{alpha}
	0 <	\alpha < \min \left\{\frac{(1-{\sigma}^2)n}{32L^2},\frac{2(5-2\sqrt{5})(1-{\sigma}^2)}{4n(5\sigma^2L^2+18-8\sqrt{5})+5L},\frac{2\sqrt{5}-4}{5L}\right\},
	\end{align}
	then we have  
	\begin{equation}\label{cg-converge-rate}
	 \min_{0 \le t \le T} \left\{ \|\tilde{\m{v}}^t\|^2+\|{\m{v}}^t-\m{M}{\m{v}}^t\|^2+\|\m{x}^{t}-\m{M}\m{x}^{t}\|^2 \right\}
		\leq \frac{P(\m{x}^{0};\m{v}^{0})-P(\m{x}^{T};\m{v}^{T})}{\gamma \min\{\alpha,1\}T},
	\end{equation}
	where $P$ is the potential function defined in \eqref{Potential-P} and
	$\gamma >0$ is a constant depending only on $n$, $L$, and $\sigma$.
\end{theorem}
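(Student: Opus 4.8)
The plan is to turn the inequality \eqref{term1} of the preceding lemma into a true one-step decrease of the full potential $P$ in \eqref{Potential-P}, and then telescope. Since \eqref{term1} controls $F(\bar{\m{x}}^t)+\frac{1}{2\alpha n}\|\m{x}^t\|^2_{\m{I}-\m{W}}$ only at the cost of the \emph{positive} terms $\frac{16L^2\alpha}{n}\|\m{x}^t-\m{M}\m{x}^t\|^2$ and $\frac{16\alpha}{n}\|\m{v}^t-\m{M}\m{v}^t\|^2$, the remaining task is to produce contraction-type recursions for the consensus error $\|\m{x}^t-\m{M}\m{x}^t\|^2$ and the tracking error $\|\m{v}^t-\m{M}\m{v}^t\|^2$ whose per-step gains beat those two terms, while the residual multiples of $\|\tilde{\m{v}}^t\|^2$ they create stay $O(\alpha^2)$, so that the negative $\|\tilde{\m{v}}^t\|^2$-coefficient of \eqref{term1} survives. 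Concretely, the goal is an inequality
\[
P(\m{x}^{t+1},\m{v}^{t+1})\le P(\m{x}^t,\m{v}^t)-\Theta_1\|\tilde{\m{v}}^t\|^2-\Theta_2\|\m{x}^t-\m{M}\m{x}^t\|^2-\Theta_3\|\m{v}^t-\m{M}\m{v}^t\|^2
\]
with $\Theta_1,\Theta_2,\Theta_3>0$ under \eqref{alpha}.

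For the consensus error, I would apply $\m{I}-\m{M}$ to the gradient-tracking form \eqref{ndcg-moment} of the $\m{x}$-update and use the identities in Lemma~\ref{property W}(3) to get
\[
\m{x}^{t+1}-\m{M}\m{x}^{t+1}=(\m{W}-\m{M})(\m{x}^t-\m{M}\m{x}^t)-\alpha(\m{v}^t-\m{M}\m{v}^t)+\alpha(\m{I}-\m{M})\bm{\beta}^t\tilde{\m{d}}^{t-1},
\]
the last term being absent at $t=0$ since $\tilde{\m{d}}^0=-\tilde{\m{v}}^0$. Lemma~\ref{property W}(4) bounds the first term by $\sigma\|\m{x}^t-\m{M}\m{x}^t\|$, and each block of $\bm{\beta}^t\tilde{\m{d}}^{t-1}$ equals $\beta_i^t\tilde{\m{d}}_i^{t-1}=\tilde{\m{d}}_i^t+\tilde{\m{v}}_i^t$ by \eqref{new_dre} with $\|\tilde{\m{d}}_i^t+\tilde{\m{v}}_i^t\|\le L\alpha c^2\|\tilde{\m{v}}_i^t\|$ as established in the proof of Lemma~\ref{lem2.5} (cf. \eqref{lem2.5_eq1}); hence Young's inequality (Lemma~\ref{young}) with parameter $\eta=\frac{1-\sigma^2}{2\sigma^2}$ gives
\[
\|\m{x}^{t+1}-\m{M}\m{x}^{t+1}\|^2\le\frac{1+\sigma^2}{2}\|\m{x}^t-\m{M}\m{x}^t\|^2+\frac{\kappa_1\alpha^2}{1-\sigma^2}\|\m{v}^t-\m{M}\m{v}^t\|^2+\frac{\kappa_2\alpha^4}{1-\sigma^2}\|\tilde{\m{v}}^t\|^2
\]
for absolute constants $\kappa_1,\kappa_2>0$. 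Likewise, applying $\m{I}-\m{M}$ to the all-node form of \eqref{tracking} gives $\m{v}^{t+1}-\m{M}\m{v}^{t+1}=(\m{W}-\m{M})(\m{v}^t-\m{M}\m{v}^t)+(\m{W}-\m{M})(\m{g}^{t+1}-\m{g}^t)$; bounding $\|\m{g}^{t+1}-\m{g}^t\|\le L\|\m{x}^{t+1}-\m{x}^t\|=L\alpha\|\tilde{\m{d}}^t\|\le Lc\alpha\|\tilde{\m{v}}^t\|$ by \eqref{3.1} and Lemma~\ref{lem2.5}, the same Young step yields
\[
\|\m{v}^{t+1}-\m{M}\m{v}^{t+1}\|^2\le\frac{1+\sigma^2}{2}\|\m{v}^t-\m{M}\m{v}^t\|^2+\frac{\kappa_3\sigma^2L^2c^2\alpha^2}{1-\sigma^2}\|\tilde{\m{v}}^t\|^2 .
\]

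Adding these two recursions to \eqref{term1} gives the target inequality with $\Theta_2=\frac{1-\sigma^2}{2}-\frac{16L^2\alpha}{n}$, $\Theta_3=\frac{1-\sigma^2}{2}-\frac{16\alpha}{n}-\frac{\kappa_1\alpha^2}{1-\sigma^2}$, and $\Theta_1=\frac{\alpha(5-2\sqrt{5})}{4n}-\frac{5L\alpha^2}{8n}-O(\alpha^2)$, where the $O(\alpha^2)$ collects the residual $\|\tilde{\m{v}}^t\|^2$-multipliers above (using $1<c\le\sqrt{5}/2$ from \eqref{new-bd-c}). A direct computation then shows \eqref{alpha} forces $\Theta_1,\Theta_2,\Theta_3>0$: its first term handles $\Theta_2$, its third keeps $\frac{\alpha(5-2\sqrt{5})}{4n}-\frac{5L\alpha^2}{8n}\ge\frac{\alpha(5-2\sqrt{5})}{8n}$, and its second then dominates the $O(\alpha^2)$ residuals and also controls $\Theta_3$. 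Moreover $\min\{\Theta_1,\Theta_2,\Theta_3\}\ge\gamma\min\{\alpha,1\}$ for some $\gamma>0$ depending only on $n,L,\sigma$, since $\Theta_1$ is of order $\alpha$ whereas $\Theta_2,\Theta_3$ are of order $1-\sigma^2$. Summing the one-step inequality over $t=0,\dots,T-1$, telescoping, using that $P$ is bounded below by Assumption~\ref{as0}, and bounding $\sum_{t=0}^{T-1}(\cdot)\ge T\min_{0\le t\le T}(\cdot)$ then yields \eqref{cg-converge-rate}.

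I expect the delicate point to be the last paragraph's bookkeeping. The consensus recursion feeds the tracking error back into itself through the $\bm{\beta}^t\tilde{\m{d}}^{t-1}$ term — this is exactly where the PRP-type self-restarting estimate \eqref{lem2.5_eq1} is indispensable — so one must check that every cross-term induced is genuinely $O(\alpha^2)$ (or smaller) and never spoils positivity; pinning down the precise admissible range \eqref{alpha}, in particular keeping the only-linear-in-$\alpha$ negative term $\frac{\alpha(5-2\sqrt{5})}{4n}\|\tilde{\m{v}}^t\|^2$ from \eqref{term1} ahead of all the quadratic residuals, is the real work, whereas the two contraction recursions themselves are routine gradient-tracking estimates.
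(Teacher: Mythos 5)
Your proposal is correct and follows essentially the same route as the paper's proof: the same rewriting $\m{x}^{t+1}=\m{W}\m{x}^t-\alpha\m{v}^t+\alpha\bm{\beta}^t\tilde{\m{d}}^{t-1}$ with the PRP restart bound \eqref{lem2.5_eq1} controlling $\bm{\beta}^t\tilde{\m{d}}^{t-1}$, the same two Young-type contraction recursions for $\|\m{x}^t-\m{M}\m{x}^t\|^2$ and $\|\m{v}^t-\m{M}\m{v}^t\|^2$ with $\eta=\tau=\frac{1-\sigma^2}{2\sigma^2}$, and the same combination with \eqref{term1} followed by telescoping. The only cosmetic difference is that you keep the $\bm{\beta}^t\tilde{\m{d}}^{t-1}$ contribution as an $O(\alpha^4)\|\tilde{\m{v}}^t\|^2$ term, whereas the paper absorbs one factor of $L\alpha$ into a numerical constant to write it as $2(1+1/\tau)(9/4-\sqrt{5})\alpha^2\|\tilde{\m{v}}^t\|^2$; both lead to the same stepsize condition \eqref{alpha}.
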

\begin{proof}
	First, by using Young's inequality with constant $\eta >0$, we 
	 establish a recursion upper bound on $\|{\m{v}}^t-\m{M}{\m{v}}^t\|^2$.
	\begin{align}\label{term2}
		\|{\m{v}}^{t+1}-\m{M}{\m{v}}^{t+1}\|^2
		=&\| \m{W}{\m{v}}^{t}+\m{W}\m{g}^{t+1}-\m{W}\m{g}^{t} -\m{M}\m{v}^t-\m{M}\m{g}^{t+1}+\m{M}\m{g}^t\|^2 \notag\\
		\leq & (1+\eta)\|\m{W}{\m{v}}^t-\m{M}{\m{v}}^t\|^2+(1+1/\eta)\|(\m{W}-\m{M})(\m{g}^{t+1}-\m{g}^t)\|^2\notag\\
		\leq & (1+\eta)\sigma^2\|{\m{v}}^t-\m{M}{\m{v}}^t\|^2+(1+1/\eta)\sigma^2L^2\|\m{x}^{t+1}-\m{x}^t\|^2,\notag\\
		\leq & (1+\eta)\sigma^2\|{\m{v}}^t-\m{M}{\m{v}}^t\|^2+(1+1/\eta)\sigma^2L^2 c^2 \alpha^2\|\tilde{\m{v}}^t\|^2,\notag\\
		\leq & (1+\eta)\sigma^2\|{\m{v}}^t-\m{M}{\m{v}}^t\|^2+(1+1/\eta)(5/4)\sigma^2L^2\alpha^2\|\tilde{\m{v}}^t\|^2,
	\end{align}
where the second inequality uses Lemma~\ref{property W} and $L$-Lipschitz continuity of $\nabla f$,
the third inequality uses Lemma~\ref{lem2.5},
the last inequality applies  $\alpha \leq \frac{2\sqrt{5}-4}{5L}$ and \eqref{new-bd-c}.

 We now establish a recursion upper bound on $\|{\m{x}}^t-\m{M}{\m{x}}^t\|^2$.
  By rewriting $\m{x}^{t+1}=\m{W}\m{x}^t-\alpha\m{v}^t+\alpha \bm{\beta}^{t}\tilde{\m{d}}^{t-1}$
  and applying Young's inequality with constant $\tau >0$, we have
	\begin{align}\label{term3}
		\|\m{x}^{t+1}-\m{M}\m{x}^{t+1}\|^2
		=&\|\m{W}\m{x}^t-\m{M}\m{x}^t-\alpha\m{v}^t+\alpha\m{M}\m{v}^t+\alpha \bm{\beta}^{t}\tilde{\m{d}}^{t-1}-\alpha\m{M} \bm{\beta}^{t}\tilde{\m{d}}^{t-1}\|^2 \notag \\
		\leq &(1+\tau) \|\m{W}\m{x}^t-\m{M}\m{x}^t\|^2\notag+2(1+1/\tau)\alpha^2\|{\m{v}}^t-\m{M}{\m{v}}^t\|^2\notag\\
		&+2(1+1/\tau)\alpha^2\|(\m{I}-\m{M})\bm{\beta}^{t}\tilde{\m{d}}^{t-1}\|^2\notag\\
		\leq&(1+\tau){\sigma}^2 \|\m{x}^{t}-\m{M}\m{x}^{t}\|^2+2(1+1/\tau)\alpha^2\|{\m{v}}^t-\m{M}{\m{v}}^t\|^2 \notag \\
		&+2(1+1/\tau)\alpha^2\|\bm{\beta}^{t}\tilde{\m{d}}^{t-1}\|^2,\notag\\
		\leq&(1+\tau){\sigma}^2 \|\m{x}^{t}-\m{M}\m{x}^{t}\|^2+2(1+1/\tau)\alpha^2\|{\m{v}}^t-\m{M}{\m{v}}^t\|^2 \notag \\
		&+2(1+1/\tau)(9/4-\sqrt{5})\alpha^2\|\tilde{\m{v}}^{t}\|^2, 
	\end{align}	
where the second inequality uses Lemma \ref{property W} and  $\rho(\m{I}-\m{M}) \leq 1$,
and the last inequality uses
$\alpha \leq \frac{2\sqrt{5}-4}{5L}$, Lemma~\ref{lem2.5}, \eqref{lem2.5_eq1} and \eqref{new-bd-c}.
Adding up \eqref{term1}, \eqref{term2}, and \eqref{term3} yields
	\begin{align*}
		&P(\m{x}^{t+1},\m{v}^{t+1})\\
		\leq &	P(\m{x}^{t},\m{v}^{t})-\left(1-(1+\tau){\sigma}^2-\frac{16L^2\alpha}{n}\right)\|\m{x}^{t}-\m{M}\m{x}^{t}\|^2
		\\
		&-\left(1-(1+\eta)\sigma^2-2\left(1+\frac{1}{\tau}\right)\alpha^2-\frac{16\alpha}{n}\right)\|{\m{v}}^t-\m{M}{\m{v}}^t\|^2\notag\\
		&-\left(\frac{\alpha(5-2\sqrt{5})}{4 n}-\frac{5L\alpha^2}{8n}-\left(1+\frac{1}{\eta}\right)\frac{5}{4}\sigma^2L^2\alpha^2-2\left(1+\frac{1}{\tau}\right)\left(\frac{9}{4}-\sqrt{5}\right)\alpha^2\right)\|\tilde{\m{v}}^{t}\|^2.
	\end{align*}
Then, setting $\eta=\tau=\frac{1-{\sigma}^2}{2{\sigma}^2}$, we have
	\begin{align}\label{P_descent}
		&P(\m{x}^{t+1},\m{v}^{t+1})\\
		\leq 	&P(\m{x}^{t},\m{v}^{t})-\left(\frac{1-{\sigma}^2}{2}-\frac{16L^2\alpha}{n}\right)\|\m{x}^{t}-\m{M}\m{x}^{t}\|^2\notag\\
		&-\left(\frac{\alpha(5-2\sqrt{5})}{4 n}-\frac{5L\alpha^2}{8n}-\frac{5(1+\sigma^2)\sigma^2L^2\alpha^2}{4(1-\sigma^2)}-\frac{2(1+{\sigma}^2)\alpha^2}{1-{\sigma}^2}\left(\frac{9}{4}-\sqrt{5}\right)\right)\|\tilde{\m{v}}^{t}\|^2\notag\\
		&-\left(\frac{1-\sigma^2}{2}-\frac{2(1+{\sigma}^2)\alpha^2}{1-{\sigma}^2}-\frac{16\alpha}{n}\right)\|{\m{v}}^t-\m{M}{\m{v}}^t\|^2
		\notag\\
		\leq& P(,\m{x}^{t},\m{v}^{t})
		-\left(\frac{\alpha(5-2\sqrt{5})}{4 n}-\frac{4n(5\sigma^2L^2+18-8\sqrt{5})+5L}{8n(1-{\sigma}^2)}\alpha^2\right)\|\tilde{\m{v}}^{t}\|^2\notag\\
		&-\left(\frac{1-\sigma^2}{2}-\frac{4\alpha^2}{1-{\sigma}^2}-\frac{16\alpha}{n}\right)\|{\m{v}}^t-\m{M}{\m{v}}^t\|^2
		-\left(\frac{1-{\sigma}^2}{2}-\frac{16L^2\alpha}{n}\right)\|\m{x}^{t}-\m{M}\m{x}^{t}\|^2,\notag
	\end{align}
where the last inequality sues $0<\sigma<1$.
	Then, by the choice of $\alpha$ in \eqref{alpha}, we can derive from \eqref{P_descent}
	and direct calculations	that there exist a constant $\gamma >0$, depending only on 
	$n$, $L$, $\sigma$, such that
	\begin{equation*}
		P(\m{x}^{t+1},\m{v}^{t+1}) \leq P(\m{x}^{t},\m{v}^{t}) - \gamma \left( 
		\alpha \|\tilde{\m{v}}^t\|^2 +\|{\m{v}}^t-\m{M}{\m{v}}^t\|^2+\|\m{x}^{t}-\m{M}\m{x}^{t}\|^2 \right). 
	\end{equation*}
	Summing the above relation over $t=1, \ldots, T$ and dividing $\gamma \min\{\alpha,1\}$ from 
	both sides gives \eqref{cg-converge-rate}.
\end{proof}

\begin{remark} We have the following comments for NDCG.
	\begin{itemize}
		\item[(a)] Given any $\epsilon >0$, by Theorem~\ref{nonconvex_convergence},
		 NDCG will take at most $P(\m{x}^{0},\m{v}^{0})/(\gamma \min\{\alpha,1\} \epsilon)$ iterations 
		 to generate an $\epsilon$-stationary point $\m{x}^t$ satisfying \eqref{eps-stationary},
		 where $\gamma >0$ is the constant given in Theorem~\ref{nonconvex_convergence}.
		\item[(b)] NDCG converges under very mild conditions, which are same as those required 
		by the centralized PRP developed in \cite{dai2011convergence}.
		In addition, formula \eqref{ndcg-moment} shows NDCG can be viewed as a decentralized 
		heavy ball method using adaptive momentum parameter $\bm{\beta}^t$.  
        However, the convergence of the ABm method \cite{xin2019distributed}, which is 
        a heavy ball method using a fixed momentum term, is still unknown for nonconvex optimization.
		\item[(c)]  The convergence stepsize range of semi-ATC GT method in the nonconex deterministic setting \cite{hong2022divergence} is 
		\[
		 	\left(0,\mathcal{O}\left(\min \left\{\frac{(1-{\sigma}^2)^3n}{L^2},\frac{1-{\sigma}^2}{n+L}\right\}\right)\right),
		 \]     
		 while for networks with $\sigma < \sqrt{\frac{8(9-4\sqrt{5})n+5L}{20nL^2}}$,  the convergence     
         stepsize range of NDCG is 
 		 \begin{equation}\label{range}
		 	 \left(0,\mathcal{O}\left(\min \left\{\frac{(1-{\sigma}^2)n}{L^2},\frac{1-{\sigma}^2}{n+L}\right\}\right)\right),
		 \end{equation}   
		  and for networks with $1 > \sigma > \sqrt{\frac{8(9-4\sqrt{5})n+5L}{20nL^2}}$,
		  the convergence  stepsize range of NDCG is 
		 \[
		\left(0, \mathcal{O}\left(\min \left\{\frac{(1-{\sigma}^2)n}{L^2},\frac{1-{\sigma}^2}{n\sigma^2L^2}\right\}\right)\right).
		\]
	\end{itemize}
\end{remark}

\subsection{Decentralized Memoryless BFGS Method}
One key issue for decentralized quasi-Newton methods is to construct effective quasi-Newton
matrices using information on individual nodes to accelerate the convergence.
Different regularizing or damping techniques are used in \cite{eisen2017decentralized,eisen2019primal,li2021bfgs,zhang2023variance} 
to ensure the quasi-Newton matrices used on local nodes are positive definite and the overall
global convergence. In particular, for global convergence, 
the decentralized ADMM developed in \cite{li2021bfgs} requires explicit positive lower bound on 
the perturbation parameter of the quasi-Newton matrix.
In this section, we develop a new quasi-Newton method based on 
the self-scaling memoryless BFGS technique proposed by Shanno \cite{shanno1978convergence},
which could be also considered as a self-scaling three-term decentralized CG method.
Our proposed new method focuses on the strongly convex objective function. 
Hence, in this section, we also need the following strongly convex assumptions on the objective
function in \eqref{obj_fun1}.
\begin{assum}\label{as2}
	The local objective functions $\left\{ f_i(\m{z}) \right\}_{i = 1}^n $ are strongly convex with
	modulus $\mu>0$, that is, for any $ \m{z},\tilde{\m{z}} \in \R^p$ and $i = 1,\ldots,n$, we have
	\begin{equation}\label{3.2}
		{f_i}(\tilde{\m{z}}) \ge {f_i}(\m{z}) + \nabla {f_i}{(\m{z})\tr}(\tilde{\m{z}}- \m{z}) + \frac{\mu }{2}{\left\| {\tilde{\m{z}} - \m{z}} \right\|^2}.
	\end{equation}
\end{assum}
By Assumption~\ref{as0} and Assumption~\ref{as2}, 
 for any $ \m{z} \in \R^p$ and $i = 1,\ldots,n$, we have
\begin{equation}\label{3.3}
	\mu {\m{I}_p} \preceq {\nabla ^2}{f_i}(\m{z}) \preceq L {\m{I}_p}.
\end{equation}
Since the Hessian $\nabla^2 f(\m{x})$ is a block diagonal matrix whose $i$-th diagonal block is $\nabla^2 f_i(\m{x}_i)$, \eqref{3.3} also implies
that for all $ \m{x} \in {\R^{np}}$, we have
\begin{equation}\label{3.4}
	\mu {\m{I}} \preceq {\nabla ^2}f(\m{x}) \preceq L{\m{I}}.
\end{equation}
Since the original problem \eqref{obj_fun1} is strongly convex by Assumption~\ref{as2}, 
it has a unique global optimal solution $\m{z}^*$.
We now let $\Omega_i^t = \left\{\m{y} \in \R^{p}: (\m{s}_i^{t})\tr\m{y} >0 \right\}$
and define the function $H_i^t: \Omega_i^t  \rightarrow \R^{p \times p}$ as
\begin{eqnarray}\label{H0}
	H_i^t(\m{y}) &= &\tau_i^t\left(\m{I}_p-\frac{\m{s}_i^t(\m{y})\tr+\m{y}(\m{s}_i^t)\tr}{(\m{s}_i^t)\tr\m{y}}\right)
	+\left(1+\frac{\tau_i^t\|\m{y}\|^2}{(\m{s}_i^{t})\tr\m{y}}\right)\frac{\m{s}_i^t(\m{s}_i^t)\tr}{(\m{s}_i^{t})\tr\m{y}} \notag \\
	&=& \frac{(\m{s}_i^t)\tr \m{y}}{\|\m{y}\|^2} \m{I}_p - 
	\frac{\m{s}_i^t(\m{y})\tr+\m{y}(\m{s}_i^t)\tr}{\|\m{y}\|^2} + 
	2 \frac{\m{s}_i^t(\m{s}_i^t)\tr}{(\m{s}_i^{t})\tr\m{y}},
\end{eqnarray}
 where $\m{s}_i^{t}=\m{x}_i^{t+1}-\m{x}_i^t$ and 
$\tau_i^t =\frac{(\m{s}_i^t)\tr \m{y}}{\|\m{y}\|^2}$.
Then, our decentralized memoryless BFGS (DMBFGS) method is described
in Alg.~\ref{alg:Framwork1}.
\begin{algorithm}[htb]
	\caption{DMBFGS with respect to node $i$}
	\label{alg:Framwork1}
	\begin{algorithmic}[1]
		\Require
		Initial point $\m{x}_i^0$,  Maximum iteration T, Stepsize $\alpha>0$, 
		Mixing matrix $\tilde{\m{W}}$, Parameters $0<l \ll u$.
		\State Set $t=0$ and $\m{d}_i^{0}=-\m{v}_i^{0}=-\m{g}_i^{0}$.
		\State If $t \geq T$, stop.
		\State $\m{x}_i^{t+1}=\sum_{j \in \mathcal{N}_i}\tilde{W}_{ij} (\m{x}_j^{t}+\alpha \m{d}_j^{t})$.
		\State
		$	\m{v}_i^{t+1}=\sum_{j \in \mathcal{N}_i}\tilde{W}_{ij}(\m{v}_j^{t}+\m{g}_j^{t+1}-\m{g}_j^{t})$.
		\State 
		\begin{equation*}
			\m{y}_i^t=\left\{\begin{array}{cl}
				\check{\m{y}}_i^t, & \text { if }  (\m{s}_i^{t})\tr \check{\m{y}}_i^t >0 \mbox{ and }  [\lambda_{\min } (H_i^t(\check{\m{y}}_i^t)),\lambda_{\max } (H_i^t(\check{\m{y}}_i^t))] \subset  [l,u]; \\
				\hat{\m{y}}_i^t, & \text { otherwise. } 
			\end{array}\right.
		\end{equation*}
		where $\m{s}_i^{t}=\m{x}_i^{t+1}-\m{x}_i^t$, $\hat{\m{y}}_i^{t}=\m{g}_i^{t+1}-\m{g}_i^t$ and $\check{\m{y}}_i^{t}=\m{v}_i^{t+1}-\m{v}_i^t$.
		\State $\tau_i^t = \frac{(\m{s}_i^t)\tr \m{y}_i^t}{\|\m{y}_i^t\|^2}$ \mbox{ and } 
		 $\theta_i^{t+1}=\frac{(\m{v}_i^{t+1})\tr\m{s}_i^t}{\|\m{y}_i^t \|^2}$.
		\State $	\beta_i^{t+1}=\frac{(\m{v}_i^{t+1})\tr\m{y}_i^t}{\|\m{y}_i^t\|^2}-2\frac{(\m{v}_i^{t+1})\tr\m{s}_i^t}{(\m{s}_i^{t})\tr\m{y}_i^t}$.
		\State
		${\m{d}}_i^{t+1}=-\tau_i^t\m{v}_i^{t+1}+\beta_i^{t+1} {\m{s}}_i^{t}+\theta_i^{t+1}\m{y}_i^t$. 
		\State Set $t=t+1$ and go to Step 2.
		\Ensure
		$\m{x}^T$.
	\end{algorithmic}
\end{algorithm}

In Alg.~\ref{alg:Framwork1}, the search direction $\m{d}_i^{t+1}$ can be
considered as a self-scaling three-term decentralized CG direction, which can be also
 equivalently written as the following quasi-Newton direction
\begin{equation}\label{d_quasi}
	\m{d}_i^{t+1}=-\m{H}_i^{t+1}\m{v}_i^{t+1},
\end{equation}
where
\begin{align}\label{H1}
	\m{H}_i^{t+1} = H_i^t(\m{y}_i^t) = 
	\frac{(\m{s}_i^t)\tr \m{y}_i^t}{\|\m{y}_i^t\|^2} \m{I}_p - 
	\frac{\m{s}_i^t(\m{y}_i^t)\tr+\m{y}_i^t(\m{s}_i^t)\tr}{\|\m{y}_i^t\|^2} + 
	2 \frac{\m{s}_i^t(\m{s}_i^t)\tr}{(\m{s}_i^{t})\tr\m{y}_i^t}.
\end{align}
Step 5 in Alg.~\ref{alg:Framwork1} is to ensure the quasi-Newton matrix \eqref{H1} 
has positive bounded eigenvalues. 
Since $\m{v}_i^t$ is generated by the gradient tracking technique in Step 4,
it captures some information of the average gradients on different nodes.
So, we prefer using $\m{v}_i^t$ to generate the quasi-Newton matrix by $H_i^t(\check{\m{y}}_i^t)$
whenever possible, where $\check{\m{y}}_i^{t}=\m{v}_i^{t+1}-\m{v}_i^t$.
However, $H_i^t(\check{\m{y}}_i^t)$ is not necessarily positive definite and bounded. 
So, in Step 5 we simply check if the smallest and largest eigenvalues of the quasi-Newton
matrix $H_i^t(\check{\m{y}}_i^t)$ belong to an interval $[l,u]$, where 
$0 < l \ll u$ are two parameters. If not, we would use the alternative $\m{g}_i^t$ to 
update the quasi-Newton matrix by $H_i^t(\hat{\m{y}}_i^t)$, where
 $\hat{\m{y}}_i^{t}=\m{g}_i^{t+1}-\m{g}_i^t$.
By the Assumption~\ref{as0}, Assumption~\ref{as2} and the BFGS updating formula \eqref{H1},  $H_i^t(\hat{\m{y}}_i^t)$ is guaranteed to be positive definite and uniformly bounded.
We now explain the computational cost of the smallest and largest eigenvalues of
 $\m{H}_i^{t+1}$ is in fact negligible. 
Note that $\m{H}_i^{t+1} = H_i^t(\check{\m{y}}_i^t)$ is obtained 
 from the scalar matrix $\tau_i^t \m{I}_p$ by a rank-two BFGS update.
 Hence, if $\tau_i^t = \frac{(\m{s}_i^t)\tr \m{y}_i^t}{\|\m{y}_i^t\|^2}  >0 $,
  $\m{H}_i^{t+1}$ has $p-2$ eigenvalues of $\tau_i^t$ and 
 two eigenvalues $0< \lambda_i^{t+1} \le \Lambda_i^{t+1}$ satisfying
\begin{equation}\label{qua}
	\left\{\begin{array}{cl}
		\lambda_i^{t+1} \Lambda_i^{t+1} &= \frac{\|\m{s}_i^t\|^2}{\|\m{y}_i^t\|^2}, \\
		\lambda_i^{t+1}+ \Lambda_i^{t+1} &=\frac{2\|\m{s}_i^t\|^2}{(\m{s}_i^t)\tr \m{y}_i^t}.
	\end{array}\right.
\end{equation}
The system \eqref{qua} defines a quadratic equation, 
which has two roots
\begin{align}\label{lam_H}
	\left\{\begin{array}{cl}
		\lambda_i^{t+1} = \lambda_{\min } (H_i^t(\m{y}_i^t))
		=\frac{\|\m{s}_i^t\|^2}{(\m{s}_i^t)\tr \check{\m{y}}_i^t}\left(1- \sqrt{1- \frac{((\m{s}_i^t)\tr \check{\m{y}}_i^t)^2}{\|\m{s}_i^t\|^2\|\check{\m{y}}_i^t\|^2}} \right),\\
		~\\
\Lambda_i^{t+1}	=	\lambda_{\max } (H_i^t(\m{y}_i^t))
		= \frac{\|\m{s}_i^t\|^2}{(\m{s}_i^t)\tr \check{\m{y}}_i^t}\left(1+ \sqrt{1- \frac{((\m{s}_i^t)\tr \check{\m{y}}_i^t)^2}{\|\m{s}_i^t\|^2\|\check{\m{y}}_i^t\|^2}} \right).
	\end{array}\right.
\end{align}
We want to emphasize that although the Barzilai-Borwein (BB) scaling factor
$\tau_i^t = \frac{(\m{s}_i^t)\tr \m{y}_i^t}{\|\m{y}_i^t\|^2}$ is used 
in our  Alg.~\ref{alg:Framwork1}, any proper positive uniformly bounded 
scaling factor $\tau_i^t$ can be used in formula \eqref{H0} for constructing
$H_i^t(\m{y})$.

We have the following comments on the computational and memory cost comparison of DMBFGS 
with other decentralized quasi-Newton methods.
\begin{remark}
DMBFG only involves vector-vector products and uses minimum
 computational and memory cost per iteration, which are on the order of $\mathcal{O}(p)$.
The per-iteration computational and memory cost of 
D-LM-BFGS  \cite{zhang2023variance} is on the order of $\mathcal{O}(Mp)$ due to its two-loop recursion
computation of the search direction, 
while those of DR-LM-DFP \cite{zhang2023variance}  are $\mathcal{O}(Mp^2)$ and $\mathcal{O}(Mp+p^2)$,
respectively, where $M$ is the memory size used in these methods.
The other existing decentralized quasi-Newton methods require matrix-vector products and 
need to store quasi-Newton matrices where 
the computation and memory requirements are at least $O(p^2)$.

\end{remark}
\subsubsection{Convergence of DMBFGS}
First, by \eqref{d_quasi}, the Step 3 and Step 4 of Alg.~\ref{alg:Framwork1}
can be rewritten across all the nodes as
\begin{align*} 
	& \m{x}^{t+1}=\m{W} \left(\m{x}^{t}-\alpha \m{H}^t \m{v}^{t}\right), \\ 
	& \m{v}^{t+1}=\m{W} \left(\m{v}^t+\m{g}^{t+1}-\m{g}^t\right),
\end{align*}
where 
\begin{equation}\label{Ht}
	\m{H}^t = \left[ {\begin{array}{*{20}{c}}
			{\m{H}_1^t}
			{}&{}&{}\\
			{}&\ddots&{}\\
			{}&{}&{\m{H}_n^t}
	\end{array}} \right].
\end{equation}
In this section, we would establish the convergence of DMBFGS in a more general setting
such that matrix $\m{H}^t$ defined in \eqref{Ht} satisfies the following assumption.
\begin{assum}\label{as3}
	There exist constants $\Psi > \psi >0$ such that for any $t \ge 0$, the approximation 
matrix $\m{H}^t$ satisfies
	\begin{equation*}
		\psi \m{I} \preceq \m{H}^t \preceq \Psi \m{I}.
	\end{equation*}
\end{assum}
We now show that $\m{H}^t$ given by \eqref{H1}, which is used in Alg.~\ref{alg:Framwork1},
satisfies the Assumption~\ref{as3}. From Step 5 of Alg.~\ref{alg:Framwork1}, 
 if $\m{y}_i^t =\check{\m{y}}_i^t$, we have $l\m{I}_p \preceq \m{H}_i^{t+1} \preceq u\m{I}_p$;
otherwise, we have $\m{y}_i^t=\hat{\m{y}}_i^t$.
Then, it follows from Assumptions~\ref{as0},\ref{as2} that
\[
 (\m{s}_i^t)\tr \hat{\m{y}}_i^t \geq (1/L)\|\hat{\m{y}}_i^t\|^2
 \quad \mbox{ and } \quad   (\m{s}_i^t)\tr \hat{\m{y}}_i^t \geq \mu\|\m{s}_i^t\|^2,
 \]
with together with \eqref{lam_H} gives 
\[
1/(2L) \le \lambda_{\min } (H_i^t(\hat{\m{y}}_i^t)) \le \lambda_{\max} (H_i^t(\hat{\m{y}}_i^t)) 
\le 2/\mu.
\]
Hence, we obtain
\begin{equation*}
	\min \left\{l,\frac{1}{2L}\right\}\m{I} \preceq \m{H}^{t+1} \preceq \max \left\{u,\frac{2}{\mu}\right\}\m{I}.
\end{equation*}
So, the $\m{H}^t$ used in Alg.~\ref{alg:Framwork1} satisfies Assumption~\ref{as3}.

In the following, to show convergence of DMBFGS,
 we define the condition number of the objective function in \eqref{obj_fun1}
and  the condition number of the underlying network $\mathcal{G}$ as
\[
\kappa_f=\frac{L}{\mu}  \quad \mbox{ and } \quad \kappa_g=\frac{1}{1-\sigma^2},
\]
respectively. In general,  $\kappa_g$ measures the network topology and
a smaller $\kappa_g$ implies greater connectivity of the network $\mathcal{G}$.
In addition, we also define
\begin{align*}
	\kappa_H=\frac{\Psi}{\psi} \quad \mbox{ and } \quad
	 \bar{\m{H}}^t=\frac{1}{n} \sum_{i=1}^n \m{H}_i^t.
\end{align*}
We start with establishing some useful lemmas.
\begin{lemma}\label{lm-xxx-aaa}
	Under Assumptions \ref{as0}, \ref{as2}, and \ref{as3}, we have
	\begin{align}\label{xxx-aaa}
		  \|\m{x}^{t+1}-\m{M}\m{x}^{t+1}\|^{2} 
		\leq&\left(\frac{1+\sigma^{2}}{2}+\frac{6\sigma^2\Psi^2\alpha^{2}L^2}{1-\sigma^{2}}\right)\|\m{x}^t-\m{M}\m{x}^t\|^{2}
		+\frac{6\sigma^2\Psi^2\alpha^{2}}{1-\sigma^{2}}\|\m{v}^t-\m{M}\m{v}^t\|^2 \notag\\
		&+\frac{12\sigma^2\Psi^2\alpha^{2}Ln}{1-\sigma^{2}} (F(\bar{\m{x}}^t)-F(\m{z}^*)). 
	\end{align}
\end{lemma}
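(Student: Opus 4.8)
The plan is to run a standard consensus--error recursion starting from the compact update $\m{x}^{t+1}=\m{W}\bigl(\m{x}^{t}-\alpha\m{H}^{t}\m{v}^{t}\bigr)$, and then control the stray term $\|\m{H}^{t}\m{v}^{t}\|$. First I would use $\m{M}\m{W}=\m{M}$ (Lemma~\ref{property W}) to write $\m{x}^{t+1}-\m{M}\m{x}^{t+1}=(\m{W}-\m{M})\bigl(\m{x}^{t}-\alpha\m{H}^{t}\m{v}^{t}\bigr)$, and since $(\m{W}-\m{M})\m{M}=\m{0}$, this equals $(\m{W}-\m{M})\bigl[(\m{x}^{t}-\m{M}\m{x}^{t})-\alpha(\m{I}-\m{M})\m{H}^{t}\m{v}^{t}\bigr]$, whose argument already lies in $\mbox{Null}(\m{M})$. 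Applying $\|(\m{W}-\m{M})\m{u}\|\le\sigma\|\m{u}\|$ to that argument, squaring, and invoking Young's inequality (Lemma~\ref{young}) with $\eta=\frac{1-\sigma^{2}}{2\sigma^{2}}$ (so that $\sigma^{2}(1+\eta)=\frac{1+\sigma^{2}}{2}$ and $\sigma^{2}(1+1/\eta)\le\frac{2\sigma^{2}}{1-\sigma^{2}}$) yields
\[
\|\m{x}^{t+1}-\m{M}\m{x}^{t+1}\|^{2}\le\frac{1+\sigma^{2}}{2}\|\m{x}^{t}-\m{M}\m{x}^{t}\|^{2}+\frac{2\sigma^{2}\alpha^{2}}{1-\sigma^{2}}\,\|(\m{I}-\m{M})\m{H}^{t}\m{v}^{t}\|^{2}.
\]
Since $\rho(\m{I}-\m{M})\le1$ and $\m{H}^{t}\preceq\Psi\m{I}$ by Assumption~\ref{as3}, the last factor is at most $\Psi^{2}\|\m{v}^{t}\|^{2}$.

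It remains to bound $\|\m{v}^{t}\|^{2}$. As in \eqref{vequalg} (the same induction applies to Step~4 of Alg.~\ref{alg:Framwork1} via $\m{M}\m{W}=\m{M}$), the ATC tracking update gives $\m{M}\m{v}^{t}=\m{M}\m{g}^{t}=\m{1}\otimes\overline{\nabla}f(\m{x}^{t})$. I would therefore decompose $\m{v}^{t}$ into the three pieces $\m{a}:=\m{v}^{t}-\m{M}\m{v}^{t}$, $\;\m{b}:=\m{1}\otimes\bigl(\overline{\nabla}f(\m{x}^{t})-\nabla F(\bar{\m{x}}^{t})\bigr)$, and $\;\m{c}:=\m{1}\otimes\nabla F(\bar{\m{x}}^{t})$, and use $\|\m{a}+\m{b}+\m{c}\|^{2}\le3\bigl(\|\m{a}\|^{2}+\|\m{b}\|^{2}+\|\m{c}\|^{2}\bigr)$. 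For $\m{b}$, Jensen's inequality (Lemma~\ref{important}) together with the Lipschitz bound \eqref{3.1} gives $\|\m{b}\|^{2}=n\|\overline{\nabla}f(\m{x}^{t})-\nabla F(\bar{\m{x}}^{t})\|^{2}\le L^{2}\|\m{x}^{t}-\m{M}\m{x}^{t}\|^{2}$; for $\m{c}$, $F$ is $L$-smooth and $\m{z}^{*}$ is its global minimizer (Assumption~\ref{as2}), so the descent lemma applied at $\bar{\m{x}}^{t}$ yields $\|\m{c}\|^{2}=n\|\nabla F(\bar{\m{x}}^{t})\|^{2}\le2Ln\bigl(F(\bar{\m{x}}^{t})-F(\m{z}^{*})\bigr)$. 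Hence
\[
\|\m{v}^{t}\|^{2}\le3\|\m{v}^{t}-\m{M}\m{v}^{t}\|^{2}+3L^{2}\|\m{x}^{t}-\m{M}\m{x}^{t}\|^{2}+6Ln\bigl(F(\bar{\m{x}}^{t})-F(\m{z}^{*})\bigr).
\]
Substituting this into the previous display and using $1+\sigma^{2}\le2$ to tidy up the constants produces exactly \eqref{xxx-aaa}.

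I do not anticipate a genuine obstacle here; this is a routine contraction estimate. The only place requiring a little care is the bookkeeping of constants: a two-term split $\m{v}^{t}=(\m{v}^{t}-\m{M}\m{v}^{t})+\m{M}\m{v}^{t}$ would still close the estimate but with worse coefficients, so recovering the stated $6$, $6$, $12$ hinges on keeping the gradient-dissimilarity term $\m{b}$ and the centered gradient $\m{c}$ separate and on the specific choice $\eta=\frac{1-\sigma^{2}}{2\sigma^{2}}$ in Young's inequality. I would also verify explicitly that $\m{M}\m{v}^{t}=\m{M}\m{g}^{t}$ holds for the DMBFGS iterates, which follows from $\m{v}^{0}=\m{g}^{0}$, the update in Step~4, and $\m{M}\m{W}=\m{M}$ by induction.
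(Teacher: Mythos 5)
Your proposal is correct and follows essentially the same route as the paper: the same consensus-error recursion from $\m{x}^{t+1}=\m{W}(\m{x}^t+\alpha\m{d}^t)$ with Young's inequality at $\eta=\frac{1-\sigma^2}{2\sigma^2}$, followed by the same three-term decomposition of $\m{v}^t$ into the tracking error, the gradient-dissimilarity term, and the centered gradient $\nabla F(\bar{\m{x}}^t)$, yielding identical constants. The only cosmetic difference is that the paper bounds $\|\m{d}^t\|\le\Psi\|\m{v}^t\|$ first and then squares the triangle inequality, whereas you square the three-term sum directly; these are equivalent.
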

\begin{proof}
	By the update of $\m{x}^{t+1} = \m{W}(\m{x}^t + \alpha \m{d}^t)$ 
	and Lemma~\ref{property W}, we have
	\begin{align}\label{x_wx}
		&\|\m{x}^{t+1}-\m{M}\m{x}^{t+1}\|^{2}  
		= \|\m{W}\m{x}^t-\m{M}\m{x}^t+\alpha(\m{W}-\m{M})\m{d}^t\|^{2} \notag\\
		=& \|(\m{W}-\m{M})(\m{x}^t-\m{M}\m{x}^t) + \alpha (\m{W}-\m{M})\m{d}^t\|^{2} \notag \\
		\leq&  \frac{1+\sigma^{2}}{2\sigma^{2}}\|(\m{W}-\m{M})(\m{x}^t-\m{M}\m{x}^t)\|^{2}+\frac{(1+\sigma^{2})\alpha^{2}}{1-\sigma^{2}}\|(\m{W}-\m{M})\m{d}^t\|^2 \notag\\
		\leq&\frac{1+\sigma^{2}}{2}\|\m{x}^t-\m{M}\m{x}^t\|^{2}+\frac{2\alpha^{2}\sigma^2}{1-\sigma^{2}}\|\m{d}^t\|^{2},
	\end{align}
where the first inequality applies Young's inequality, Lemma~\ref{young},  with 
$\eta=\frac{1-\sigma^2}{2\sigma^2}$.
Now, it follows from Assumption~\ref{as3} that
	\begin{align*}
		&\|\m{d}^t\|=\|\m{H}^t\m{v}^t\| \leq \Psi \|\m{v}^t\|\\
		= & \Psi \|\m{v}^t-\m{M}\m{v}^t+\m{1}_n \otimes \overline{\nabla} f(\m{x}^t)-\m{1}_n \otimes \nabla F(\bar{\m{x}}^t)+\m{1}_n \otimes \nabla F(\bar{\m{x}}^t)\|\\
		\leq& \Psi\|\m{v}^t-\m{M}\m{v}^t\|+\Psi\sqrt{n}\|\overline{\nabla} f(\m{x}^t)-\nabla F(\bar{\m{x}}^t)\|
		+\Psi\sqrt{n}\|\nabla F(\bar{\m{x}}^t)\|\\
		\leq &\Psi\|\m{v}^t-\m{M}\m{v}^t\|+\Psi L\|{\m{x}}^t-\m{M}{\m{x}}^t\|+\Psi\sqrt{n}\|\nabla F(\bar{\m{x}}^t)\|,
	\end{align*}
where the second equality follows from $\m{1}_n \otimes \overline{\nabla} f(\m{x}^t)=\m{M}\m{g}^t=\m{M}\m{v}^t$ by \eqref{vequalg}, and the last inequality follows from Assumption~\ref{as0} and 
Lemma~\ref{important}.
Taking square on both sides of the above inequality, we get
	\begin{align}\label{I_Wd0}
		\|\m{d}^t\|^2 
		\leq  3\Psi^2\|\m{v}^t-\m{M}\m{v}^t\|^2+3\Psi^2L^2\|{\m{x}}^t-\m{M}\m{x}^t\|^2
		+3\Psi^2n\|\nabla F(\bar{\m{x}}^t)\|^2.
	\end{align}
By the $L$-Lipschitz continuity of $\nabla F$, we have
 $\|\nabla F(\bar{\m{x}}^t)\|^2\leq 2L (F(\bar{\m{x}}^t)-F(\m{z}^*))$.
 Hence, we obtain from \eqref{I_Wd0} that
	\begin{align}\label{I_Wd}
		\|\m{d}^t\|^2 
		\leq  3\Psi^2\|\m{v}^t-\m{M}\m{v}^t\|^2+3\Psi^2L^2\|{\m{x}}^t-\m{M}\m{x}^t\|^2
		+6\Psi^2Ln(F(\bar{\m{x}}^t)-F(\m{z}^*)).
	\end{align}
	Substituting \eqref{I_Wd} into \eqref{x_wx} yields \eqref{xxx-aaa}.
\end{proof}

\begin{lemma}\label{lem 2.6}
	Under Assumptions \ref{as0}, \ref{as2}, and \ref{as3}, we have
	\begin{align}\label{yyy-bbb}
		\|\bar{\m{d}}^t+\bar{\m{H}}^t\nabla F(\bar{\m{x}}^t)\|^2
		\leq  \frac{2\Psi^2}{n}\|\m{v}^t-\m{M}\m{v}^t\|^2+\frac{2\Psi^2L^2}{n}\|{\m{x}}^t-\m{M}\m{x}^t\|^2.
	\end{align}
\end{lemma}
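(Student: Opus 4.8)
The plan is to first collapse the left-hand side into a single node average. Since $\m{d}^t=-\m{H}^t\m{v}^t$ (the direction in Step~8, written across all nodes), we have $\m{d}_i^t=-\m{H}_i^t\m{v}_i^t$, so using $\bar{\m{H}}^t=\frac1n\sum_{i=1}^n\m{H}_i^t$,
\[
\bar{\m{d}}^t+\bar{\m{H}}^t\nabla F(\bar{\m{x}}^t)=\frac1n\sum_{i=1}^n\m{H}_i^t\bigl(\nabla F(\bar{\m{x}}^t)-\m{v}_i^t\bigr).
\]
Then I would apply Jensen's inequality (Lemma~\ref{important}) together with the bound $\m{H}_i^t\preceq\Psi\m{I}_p$ coming from Assumption~\ref{as3} to obtain
\[
\|\bar{\m{d}}^t+\bar{\m{H}}^t\nabla F(\bar{\m{x}}^t)\|^2\le\frac{\Psi^2}{n}\sum_{i=1}^n\|\nabla F(\bar{\m{x}}^t)-\m{v}_i^t\|^2=\frac{\Psi^2}{n}\,\|\m{1}_n\otimes\nabla F(\bar{\m{x}}^t)-\m{v}^t\|^2 .
\]

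Next I would split $\m{v}^t$ through its mean. By \eqref{vequalg} we have $\m{M}\m{v}^t=\m{1}_n\otimes\overline{\nabla}f(\m{x}^t)$, so writing $\m{1}_n\otimes\nabla F(\bar{\m{x}}^t)-\m{v}^t=\m{1}_n\otimes(\nabla F(\bar{\m{x}}^t)-\overline{\nabla}f(\m{x}^t))+(\m{M}\m{v}^t-\m{v}^t)$ and using the elementary bound $\|\m{a}+\m{b}\|^2\le 2\|\m{a}\|^2+2\|\m{b}\|^2$ (Lemma~\ref{young} with $\eta=1$) gives
\[
\|\m{1}_n\otimes\nabla F(\bar{\m{x}}^t)-\m{v}^t\|^2\le 2n\,\|\nabla F(\bar{\m{x}}^t)-\overline{\nabla}f(\m{x}^t)\|^2+2\,\|\m{v}^t-\m{M}\m{v}^t\|^2 .
\]
For the first term, $\nabla F(\bar{\m{x}}^t)-\overline{\nabla}f(\m{x}^t)=\frac1n\sum_{i=1}^n(\nabla f_i(\bar{\m{x}}^t)-\nabla f_i(\m{x}_i^t))$, and Jensen's inequality plus the $L$-Lipschitz continuity of each $\nabla f_i$ (Assumption~\ref{as0}) gives $n\|\nabla F(\bar{\m{x}}^t)-\overline{\nabla}f(\m{x}^t)\|^2\le L^2\sum_{i=1}^n\|\bar{\m{x}}^t-\m{x}_i^t\|^2=L^2\|\m{x}^t-\m{M}\m{x}^t\|^2$. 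Substituting back produces \eqref{yyy-bbb} with precisely the stated constants $2\Psi^2$ and $2\Psi^2L^2$.

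There is essentially no obstacle in this lemma; it is a routine consensus/gradient-tracking estimate. The only points requiring care are bookkeeping ones: the factor-of-two split must be used exactly once so the constants come out as $2\Psi^2$ rather than $4\Psi^2$, and one must invoke $\m{M}\m{v}^t=\m{1}_n\otimes\overline{\nabla}f(\m{x}^t)$ (not $\m{M}\m{g}^t$) so that the gradient-tracking-error term $\|\m{v}^t-\m{M}\m{v}^t\|^2$ appears directly rather than an error in $\m{g}^t$.
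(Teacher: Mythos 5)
Your proposal is correct and follows essentially the same route as the paper's proof: both pivot through $\overline{\nabla}f(\m{x}^t)$, use $\m{d}_i^t=-\m{H}_i^t\m{v}_i^t$ with Jensen's inequality and $\m{H}_i^t\preceq\Psi\m{I}_p$, a single factor-of-two split, the identity $\m{M}\m{v}^t=\m{1}_n\otimes\overline{\nabla}f(\m{x}^t)$, and the $L$-Lipschitz bound $\|\nabla F(\bar{\m{x}}^t)-\overline{\nabla}f(\m{x}^t)\|^2\le\frac{L^2}{n}\|\m{x}^t-\m{M}\m{x}^t\|^2$. The only difference is the order of operations (you pull the $\Psi$ bound out before splitting, the paper splits first), which yields identical constants.
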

\begin{proof}
	By Lemma~\ref{important}, Assumption~\ref{as0} and Assumption~\ref{as3}, we have
	\begin{align}\label{bars}
		\|\bar{\m{d}}^t+\bar{\m{H}}^t\nabla F(\bar{\m{x}}^t)\|^2
		=&\|\bar{\m{d}}^t+\bar{\m{H}}^t\overline{\nabla} f(\m{x}^t)+\bar{\m{H}}^t(\nabla F(\bar{\m{x}}^t)-\overline{\nabla} f(\m{x}^t))\|^2\\
		\leq & 2\|\bar{\m{d}}^t+\bar{\m{H}}^t\overline{\nabla} f(\m{x}^t)\|^2+2\Psi^2\|\nabla F(\bar{\m{x}}^t)-\overline{\nabla} f(\m{x}^t)\|^2 \notag \\
		\leq & 2\|\bar{\m{d}}^t+\bar{\m{H}}^t\overline{\nabla} f(\m{x}^t)\|^2
	+\frac{2\Psi^2L^2}{n}\|{\m{x}}^t-\m{M}\m{x}^t\|^2. \notag
	\end{align}
Since $\m{d}^t = - \m{H}^t \m{v}^t$, we have
 $\bar{\m{d}}^t = -\frac{1}{n} \sum_{i=1}^n \m{H}_i^t( \m{v}_i^t-\overline{\nabla} f(\m{x}^t))-\bar{\m{H}}^t\overline{\nabla} f(\m{x}^t)$.
 Then, it follows from Lemma~\ref{important} and Assumption \ref{as3} that
	\begin{align*}
		\|\bar{\m{d}}^t+\bar{\m{H}}^t\overline{\nabla} f(\m{x}^t)\|^2
		=\|\frac{1}{n} \sum_{i=1}^n\m{H}_i^t( \m{v}_i^t-\overline{\nabla} f(\m{x}^t)) \|^2
		\leq  \frac{\Psi^2}{n}\|\m{v}^t-\m{M}\m{v}^t\|^2.
	\end{align*}
The above inequality and \eqref{bars} give \eqref{yyy-bbb}.
\end{proof}

\begin{lemma}\label{lm-xyz-abc}
	Under Assumptions \ref{as0}, \ref{as2}, and \ref{as3}, we have
	\begin{align}\label{xyz-abc}
		n\left(F(\bar{\m{x}}^{t+1})-F(\m{z}^*) \right) 
		\leq &\left[1-\left(\psi\alpha-3\alpha^2L\Psi^2 \right)\mu\right] n(F(\bar{\m{x}}^t)-F(\m{z}^*))\\
		&+ \left(\frac{\alpha L^2\Psi^2}{\psi}+\frac{3L^3\Psi^2\alpha^{2}}{2} \right)\|{\m{x}}^t-\m{M}\m{x}^t\|^2\notag\\
		&+ \left(\frac{\alpha\Psi^2}{\psi}+\frac{3L\Psi^2\alpha^{2}}{2} \right)\|\m{v}^t-\m{M}\m{v}^t\|^2.\notag
	\end{align}
\end{lemma}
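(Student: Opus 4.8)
The plan is to run a standard descent-lemma argument on the averaged iterate $\bar{\m{x}}^t$, then convert the resulting gradient decrease into a decrease of the optimality gap $F(\bar{\m{x}}^t)-F(\m{z}^*)$ via strong convexity, using Lemma~\ref{lem 2.6} to absorb the tracking error $\bar{\m{d}}^t+\bar{\m{H}}^t\nabla F(\bar{\m{x}}^t)$ into the consensus and gradient-tracking error terms. First I would note that, since $\m{M}\m{W}=\m{M}$, the update $\m{x}^{t+1}=\m{W}(\m{x}^t+\alpha\m{d}^t)$ gives $\bar{\m{x}}^{t+1}=\bar{\m{x}}^t+\alpha\bar{\m{d}}^t$, and that $F$ is $L$-smooth under Assumption~\ref{as0}, so
\[
F(\bar{\m{x}}^{t+1})\leq F(\bar{\m{x}}^t)+\alpha\langle\nabla F(\bar{\m{x}}^t),\bar{\m{d}}^t\rangle+\frac{L\alpha^2}{2}\|\bar{\m{d}}^t\|^2.
\]
I would also record that, since each diagonal block of $\m{H}^t$ lies between $\psi\m{I}_p$ and $\Psi\m{I}_p$ under Assumption~\ref{as3}, the average $\bar{\m{H}}^t=\frac{1}{n}\sum_{i=1}^n\m{H}_i^t$ satisfies $\psi\m{I}_p\preceq\bar{\m{H}}^t\preceq\Psi\m{I}_p$.

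Next I would write $\bar{\m{d}}^t=-\bar{\m{H}}^t\nabla F(\bar{\m{x}}^t)+\m{e}^t$ with $\m{e}^t:=\bar{\m{d}}^t+\bar{\m{H}}^t\nabla F(\bar{\m{x}}^t)$ and bound the two terms of the descent inequality separately. For the cross term, $\langle\nabla F(\bar{\m{x}}^t),\bar{\m{d}}^t\rangle=-\langle\nabla F(\bar{\m{x}}^t),\bar{\m{H}}^t\nabla F(\bar{\m{x}}^t)\rangle+\langle\nabla F(\bar{\m{x}}^t),\m{e}^t\rangle\leq-\tfrac{\psi}{2}\|\nabla F(\bar{\m{x}}^t)\|^2+\tfrac{1}{2\psi}\|\m{e}^t\|^2$, using $\bar{\m{H}}^t\succeq\psi\m{I}_p$ and Young's inequality (Lemma~\ref{young}) with $\eta=\psi$. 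For the squared norm, Young's inequality with $\eta=2$ gives $\|\bar{\m{d}}^t\|^2\leq 3\|\bar{\m{H}}^t\nabla F(\bar{\m{x}}^t)\|^2+\tfrac32\|\m{e}^t\|^2\leq 3\Psi^2\|\nabla F(\bar{\m{x}}^t)\|^2+\tfrac32\|\m{e}^t\|^2$, using $\bar{\m{H}}^t\preceq\Psi\m{I}_p$. Substituting these two estimates into the descent inequality yields
\[
F(\bar{\m{x}}^{t+1})\leq F(\bar{\m{x}}^t)-\Big(\tfrac{\alpha\psi}{2}-\tfrac{3L\alpha^2\Psi^2}{2}\Big)\|\nabla F(\bar{\m{x}}^t)\|^2+\Big(\tfrac{\alpha}{2\psi}+\tfrac{3L\alpha^2}{4}\Big)\|\m{e}^t\|^2.
\]

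Finally I would invoke Lemma~\ref{lem 2.6}, namely $\|\m{e}^t\|^2\leq\frac{2\Psi^2}{n}\|\m{v}^t-\m{M}\m{v}^t\|^2+\frac{2\Psi^2L^2}{n}\|\m{x}^t-\m{M}\m{x}^t\|^2$, together with strong convexity of $F$ (Assumption~\ref{as2}), which gives $\|\nabla F(\bar{\m{x}}^t)\|^2\geq 2\mu\big(F(\bar{\m{x}}^t)-F(\m{z}^*)\big)$; multiplying through by $n$ and collecting coefficients then produces exactly~\eqref{xyz-abc}. The main points to watch are (i) choosing the Young's-inequality parameters ($\eta=\psi$ for the cross term, $\eta=2$ for the split of $\|\bar{\m{d}}^t\|^2$) so the bookkeeping lands on precisely the advertised constants $\tfrac{\alpha L^2\Psi^2}{\psi}+\tfrac{3L^3\Psi^2\alpha^2}{2}$ and $\tfrac{\alpha\Psi^2}{\psi}+\tfrac{3L\Psi^2\alpha^2}{2}$, and (ii) the strong-convexity step, which tacitly needs $\tfrac{\alpha\psi}{2}-\tfrac{3L\alpha^2\Psi^2}{2}\geq 0$ — automatic for the stepsize range used in the convergence theorem and worth noting in passing. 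The remainder is routine algebra, so I do not anticipate a genuine obstacle here; the real work lies in later combining \eqref{xxx-aaa}, \eqref{yyy-bbb} and \eqref{xyz-abc} into a contraction.
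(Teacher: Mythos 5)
Your proof is correct and follows essentially the same route as the paper's: the descent lemma applied to $\bar{\m{x}}^{t+1}=\bar{\m{x}}^t+\alpha\bar{\m{d}}^t$, the identical treatment of the cross term via $\bar{\m{H}}^t\succeq\psi\m{I}_p$ and Young's inequality with $\eta=\psi$ followed by Lemma~\ref{lem 2.6}, and strong convexity at the end. The only cosmetic difference is the bound on $\|\bar{\m{d}}^t\|^2$ --- the paper uses Jensen's inequality $\|\bar{\m{d}}^t\|^2\le\frac{1}{n}\|\m{d}^t\|^2$ together with its earlier estimate \eqref{I_Wd0}, whereas you split $\bar{\m{d}}^t=-\bar{\m{H}}^t\nabla F(\bar{\m{x}}^t)+\m{e}^t$ and reuse Lemma~\ref{lem 2.6}; both land on exactly the advertised constants, and your remark that the strong-convexity step tacitly requires $\psi\alpha-3L\alpha^2\Psi^2\ge 0$ (left implicit in the paper) is valid and worth keeping.
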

\begin{proof}
	Taking the average of the update $\m{x}^{t+1}  = \m{W}(\m{x}^t + \alpha \m{d}^t)$, we have
	\begin{equation*}
		\bar{\m{x}}^{t+1}=\bar{\m{x}}^{t}+\alpha \bar{\m{d}}^{t}.
	\end{equation*}
	Then, by the $L$-Lipschitz continuity of $\nabla F$, it holds that
	\begin{align}\label{fbarx}
		F(\bar{\m{x}}^{t+1})\leq F(\bar{\m{x}}^t)+\alpha\Big\langle\nabla F(\bar{\m{x}}^t),\bar{\m{d}}^t\Big\rangle+\frac{L\alpha^{2}}{2}\|\bar{\m{d}}^t\|^{2}.
	\end{align}
	From Assumption~\ref{as3}, we have
	 $\left\langle \nabla F(\bar{\m{x}}^t), \bar{\m{H}}^t \nabla F(\bar{\m{x}}^t)\right\rangle \geq \psi \|\nabla F(\bar{\m{x}}^t)\|^2 $. Then, it follows from the Young's inequality, Lemma~\ref{young},
	 with  $\eta=\psi$ that
	\begin{align}\label{fd}
		\Big\langle\nabla F(\bar{\m{x}}^t),\bar{\m{d}}^t\Big\rangle
		\leq &-\psi\|\nabla F(\bar{\m{x}}^t)\|^2+\Big\langle\nabla F(\bar{\m{x}}^t),\bar{\m{d}}^t+\bar{\m{H}}^t\nabla F(\bar{\m{x}}^t)\Big\rangle\\
		\leq&-\frac{\psi}{2}\|\nabla F(\bar{\m{x}}^t)\|^2+\frac{1}{2\psi}\|\bar{\m{d}}^t+\bar{\m{H}}^t\nabla F(\bar{\m{x}}^t)\|^2\notag\\
		\leq &-\frac{\psi}{2}\|\nabla F(\bar{\m{x}}^t)\|^2+\frac{\Psi^2}{\psi n}\|\m{v}^t-\m{M}\m{v}^t\|^2
		+\frac{\Psi^2L^2}{\psi n}\|{\m{x}}^t-\m{M}\m{x}^t\|^2,\notag
	\end{align}
	where the last inequality is by Lemma~\ref{lem 2.6}. 
	Moreover, by Lemma~\ref{important} and \eqref{I_Wd0}, we have
	\begin{align}\label{Ld}
		\frac{L\alpha^{2}}{2}\|\bar{\m{d}}^t\|^{2}\leq\frac{L\alpha^{2}}{2n}\|{\m{d}}^t\|^{2}
		\leq& \frac{3L\Psi^2\alpha^{2}}{2n}\|\m{v}^t-\m{M}\m{v}^t\|^2+\frac{3L^3\Psi^2\alpha^{2}}{2n}\|{\m{x}}^t-\m{M}\m{x}^t\|^2\\
		&+\frac{3}{2}L\Psi^2\alpha^{2}\|\nabla F(\bar{\m{x}}^t)\|^2.\notag
	\end{align}
	Substituting \eqref{fd} and \eqref{Ld} into \eqref{fbarx} yields
	\begin{align}\label{nFF}
		nF(\bar{\m{x}}^{t+1})\leq& nF(\bar{\m{x}}^t)-\left( \frac{\psi\alpha}{2}-\frac{3\alpha^2L\Psi^2}{2}\right) n\|\nabla F(\bar{\m{x}}^t)\|^2\\
		&+\left( \frac{\alpha L^2\Psi^2}{\psi}+\frac{3L^3\Psi^2\alpha^{2}}{2}\right) \|{\m{x}}^t-\m{M}\m{x}^t\|^2
		+\left( \frac{\alpha\Psi^2}{\psi}+\frac{3L\Psi^2\alpha^{2}}{2}\right) \|\m{v}^t-\m{M}\m{v}^t\|^2\notag.
	\end{align}
   From the  $\mu$-strong convexity of $F$, we have $\|\nabla F(\bar{\m{x}}^t)\|^2\geq 2\mu (F(\bar{\m{x}}^t)-F(\m{z}^*))$.
   Then, we have from \eqref{nFF} that
	\begin{align*}
		nF(\bar{\m{x}}^{t+1})\leq &nF(\bar{\m{x}}^t)-\left( \psi\alpha-3\alpha^2L\Psi^2\right) \mu n(F(\bar{\m{x}}^t)-F(\m{z}^*))\\
		&	+\left( \frac{\alpha L^2\Psi^2}{\psi}+\frac{3L^3\Psi^2\alpha^{2}}{2}\right) \|{\m{x}}^t-\m{M}\m{x}^t\|^2
		+\left( \frac{\alpha\Psi^2}{\psi}+\frac{3L\Psi^2\alpha^{2}}{2}\right) \|\m{v}^t-\m{M}\m{v}^t\|^2.
	\end{align*}
	Subtracting $nF(\m{z}^*)$ on both sides of the above inequality gives \eqref{xyz-abc}.
\end{proof}
\begin{lemma}\label{lem2.9}
	Under Assumptions \ref{as0}, \ref{as2}, and \ref{as3}, we have
	\begin{align}\label{zzz-ccc}
		\|\m{x}^{t+1}-\m{x}^t\|^2
		\leq&\left(8+6\alpha^2L^2\Psi^2\right)\|\m{x}^t-\m{M}\m{x}^t\|^{2}+6\alpha^2\Psi^2\|\m{v}^t-\m{M}\m{v}^t\|^2 \notag \\
		&+12\alpha^2L\Psi^2n(F(\bar{\m{x}}^t)-F(\m{z}^*)).
	\end{align}
\end{lemma}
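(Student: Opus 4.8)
The plan is to start from the compact form of the update, $\m{x}^{t+1}=\m{W}(\m{x}^t+\alpha\m{d}^t)$, and write
\[
\m{x}^{t+1}-\m{x}^t=(\m{W}-\m{I})\m{x}^t+\alpha\m{W}\m{d}^t .
\]
The first term lies in the consensus-orthogonal subspace: since $\m{W}\m{M}=\m{M}$ by Lemma~\ref{property W}, we have $(\m{W}-\m{I})\m{M}=\m{0}$, hence $(\m{W}-\m{I})\m{x}^t=(\m{W}-\m{I})(\m{x}^t-\m{M}\m{x}^t)$, and because $\m{W}-\m{I}$ is symmetric with $\rho(\m{I}-\m{W})<2$ this yields $\|(\m{W}-\m{I})\m{x}^t\|\le 2\|\m{x}^t-\m{M}\m{x}^t\|$. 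For the second term, $\rho(\m{W})=1$ together with symmetry of $\m{W}$ gives $\|\m{W}\m{d}^t\|\le\|\m{d}^t\|$. Applying the elementary bound $\|\m{a}+\m{b}\|^2\le 2\|\m{a}\|^2+2\|\m{b}\|^2$ (the $\eta=1$ case of Lemma~\ref{young}) then produces
\[
\|\m{x}^{t+1}-\m{x}^t\|^2\le 8\|\m{x}^t-\m{M}\m{x}^t\|^2+2\alpha^2\|\m{d}^t\|^2 .
\]

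Next I would bound $\|\m{d}^t\|^2$, which has in fact already been carried out inside the proof of Lemma~\ref{lm-xxx-aaa}: inequality \eqref{I_Wd} states
\[
\|\m{d}^t\|^2\le 3\Psi^2\|\m{v}^t-\m{M}\m{v}^t\|^2+3\Psi^2L^2\|\m{x}^t-\m{M}\m{x}^t\|^2+6\Psi^2Ln\bigl(F(\bar{\m{x}}^t)-F(\m{z}^*)\bigr),
\]
which follows from $\m{d}^t=-\m{H}^t\m{v}^t$, Assumption~\ref{as3}, the splitting $\m{v}^t=(\m{v}^t-\m{M}\m{v}^t)+\m{1}_n\otimes\overline{\nabla}f(\m{x}^t)$, $L$-Lipschitz continuity of $\nabla F$, Lemma~\ref{important}, and the standard inequality $\|\nabla F(\bar{\m{x}}^t)\|^2\le 2L(F(\bar{\m{x}}^t)-F(\m{z}^*))$ for $L$-smooth functions. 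Substituting this estimate into the previous display and collecting the coefficients of $\|\m{x}^t-\m{M}\m{x}^t\|^2$, $\|\m{v}^t-\m{M}\m{v}^t\|^2$ and $F(\bar{\m{x}}^t)-F(\m{z}^*)$ gives exactly \eqref{zzz-ccc}.

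There is no genuine obstacle here; the only point that needs a little care is routing the term $(\m{W}-\m{I})\m{x}^t$ through the consensus projection, so that it is controlled by the consensus error $\|\m{x}^t-\m{M}\m{x}^t\|$ rather than by $\|\m{x}^t\|$ (which need not be bounded), and noting that the outer mixing step contributes nothing beyond $\rho(\m{W})=1$. Everything else is bookkeeping: reusing the already-established bound \eqref{I_Wd} and tracking the constants.
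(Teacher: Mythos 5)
Your proposal is correct and follows essentially the same route as the paper: the same decomposition $\m{x}^{t+1}-\m{x}^t=(\m{W}-\m{I})\m{x}^t+\alpha\m{W}\m{d}^t$, the same bound $\|(\m{W}-\m{I})\m{x}^t\|\le 2\|\m{x}^t-\m{M}\m{x}^t\|$, and the same reuse of the direction bound \eqref{I_Wd}. The constants all check out, so nothing further is needed.
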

\begin{proof}
    It follows from $\|(\m{W}-\m{I}) \m{x}^t\|=\|(\m{W}-\m{I}) (\m{x}^t-\m{M}\m{x}^t)\| \leq 2 \|\m{x}^t-\m{M}\m{x}^t\|$ that
	\begin{align*}
		& \|\m{x}^{t+1}-\m{x}^t\|^2
		=\|(\m{W}-\m{I}) \m{x}^t+\alpha\m{W} \m{d}^t\|^2 \leq 8\|\m{x}^t-\m{M}\m{x}^t\|^2+2\alpha^2\|\m{d}^t\|^2.
	\end{align*}
	Then, the above inequality and \eqref{I_Wd} give \eqref{zzz-ccc}.
\end{proof}

\begin{lemma}\label{lm-www-ddd}
	Under Assumptions \ref{as0}, \ref{as2}, and \ref{as3}, we have
	\begin{align}\label{www-ddd}
		\|\m{v}^{t+1}-\m{M}\m{v}^{t+1}\|^2
		\leq
		&\left( \frac{12\alpha^2L^2\Psi^2\sigma^2}{1-\sigma^2}+\frac{1+\sigma^2}{2}\right) \|\m{v}^t-\m{M}\m{v}^t\|^2\\
		&+\frac{2L^2\sigma^2}{1-\sigma^2}\left(8+6\alpha^2L^2\Psi^2\right)\|\m{x}^t-\m{M}\m{x}^t\|^{2}\notag\\
		&+\frac{24\alpha^2L^3\Psi^2 \sigma^2}{1-\sigma^2}n(F(\bar{\m{x}}^t)-F(\m{z}^*))\notag.
	\end{align}
\end{lemma}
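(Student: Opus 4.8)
The plan is to derive \eqref{www-ddd} in the same spirit as the companion gradient–tracking error recursion \eqref{term2} (and \eqref{xxx-aaa}), starting from the stacked update $\m{v}^{t+1}=\m{W}(\m{v}^t+\m{g}^{t+1}-\m{g}^t)$ from Step~4 of Alg.~\ref{alg:Framwork1}. First I would use $\m{M}\m{W}=\m{M}$ (Lemma~\ref{property W}, item 3) to write
\[
\m{v}^{t+1}-\m{M}\m{v}^{t+1}=(\m{W}-\m{M})\bigl(\m{v}^t+\m{g}^{t+1}-\m{g}^t\bigr),
\]
and then apply Lemma~\ref{property W}, item 4, to obtain
\[
\|\m{v}^{t+1}-\m{M}\m{v}^{t+1}\|\le \sigma\bigl\|(\m{v}^t-\m{M}\m{v}^t)+(\m{I}-\m{M})(\m{g}^{t+1}-\m{g}^t)\bigr\|,
\]
where I have split off the mean using $\m{M}(\m{v}^t+\m{g}^{t+1}-\m{g}^t)=\m{M}\m{v}^t+\m{M}(\m{g}^{t+1}-\m{g}^t)$.

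Next I would square both sides and apply Young's inequality (Lemma~\ref{young}) with $\eta=\tfrac{1-\sigma^2}{1+\sigma^2}$. This choice makes the cross-term coefficient $\sigma^2(1+1/\eta)$ equal to $\tfrac{2\sigma^2}{1-\sigma^2}$, while the leading coefficient becomes $\sigma^2(1+\eta)=\tfrac{2\sigma^2}{1+\sigma^2}\le\tfrac{1+\sigma^2}{2}$, the last step being the elementary inequality equivalent to $(1-\sigma^2)^2\ge0$; this produces the $\tfrac{1+\sigma^2}{2}$ appearing in \eqref{www-ddd}. For the cross term I would use $\rho(\m{I}-\m{M})\le1$ together with the $L$-Lipschitz continuity of $\nabla f$ from Assumption~\ref{as0} to bound
\[
\|(\m{I}-\m{M})(\m{g}^{t+1}-\m{g}^t)\|^2\le\|\m{g}^{t+1}-\m{g}^t\|^2\le L^2\|\m{x}^{t+1}-\m{x}^t\|^2,
\]
so that at this stage the right-hand side is $\tfrac{1+\sigma^2}{2}\|\m{v}^t-\m{M}\m{v}^t\|^2+\tfrac{2L^2\sigma^2}{1-\sigma^2}\|\m{x}^{t+1}-\m{x}^t\|^2$.

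Finally I would substitute the bound on $\|\m{x}^{t+1}-\m{x}^t\|^2$ from Lemma~\ref{lem2.9}; multiplying its three terms by $\tfrac{2L^2\sigma^2}{1-\sigma^2}$ gives coefficients $\tfrac{2L^2\sigma^2}{1-\sigma^2}(8+6\alpha^2L^2\Psi^2)$ on $\|\m{x}^t-\m{M}\m{x}^t\|^2$, $\tfrac{12\alpha^2L^2\Psi^2\sigma^2}{1-\sigma^2}$ on $\|\m{v}^t-\m{M}\m{v}^t\|^2$, and $\tfrac{24\alpha^2L^3\Psi^2\sigma^2}{1-\sigma^2}$ on $n(F(\bar{\m{x}}^t)-F(\m{z}^*))$, which together with the $\tfrac{1+\sigma^2}{2}$ already present on $\|\m{v}^t-\m{M}\m{v}^t\|^2$ reproduces exactly \eqref{www-ddd}. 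There is no real obstacle here; the only point requiring care is picking the Young's parameter $\eta$ so that the resulting coefficients match those of the companion estimates \eqref{xxx-aaa}, \eqref{xyz-abc} and \eqref{zzz-ccc}, which are subsequently assembled into a single linear recursion in the convergence proof of DMBFGS.
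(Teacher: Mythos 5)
Your proposal is correct and follows essentially the same route as the paper: the same decomposition of $\m{v}^{t+1}-\m{M}\m{v}^{t+1}$ into the tracked part and the gradient increment, Young's inequality tuned to produce the $\tfrac{1+\sigma^2}{2}$ contraction factor, the Lipschitz bound $\|\m{g}^{t+1}-\m{g}^t\|^2\le L^2\|\m{x}^{t+1}-\m{x}^t\|^2$, and finally Lemma~\ref{lem2.9}. The only (immaterial) difference is that you factor out $\sigma$ from the whole expression before applying Young's inequality with $\eta=\tfrac{1-\sigma^2}{1+\sigma^2}$, whereas the paper applies Young's inequality with $\eta=\tfrac{1-\sigma^2}{2\sigma^2}$ first and contracts each term separately; both yield identical coefficients.
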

\begin{proof}
	By the update of $\m{v}^{t+1}$ and the Young's inequality, Lemma~\ref{young},
	 with $\eta=\frac{1-\sigma^2}{2\sigma^2}$, we have 
	\begin{align*}
		&\|\m{v}^{t+1}-\m{M}\m{v}^{t+1}\|^2=\| \m{W}{\m{v}}^{t}+\m{W}\m{g}^{t+1}-\m{W}\m{g}^{t} -\m{M}\m{v}^t-\m{M}\m{g}^{t+1}+\m{M}\m{g}^t\|^2\\
		\leq&\frac{1+\sigma^2}{2\sigma^2}\|\m{W}\m{v}^t-\m{M}\m{v}^t\|^2+\frac{1+\sigma^2}{1-\sigma^2}\|(\m{W}-\m{M})(\m{g}^{t+1}-\m{g}^t)\|^2\\
		\leq&\frac{1+\sigma^2}{2}\|\m{v}^t-\m{M}\m{v}^t\|^2+\frac{2\sigma^2}{1-\sigma^2}\|\m{g}^{t+1}-\m{g}^t\|^2\\
		\leq& \frac{1+\sigma^2}{2}\|\m{v}^t-\m{M}\m{v}^t\|^2+\frac{2L^2\sigma^2}{1-\sigma^2}\|\m{x}^{t+1}-\m{x}^t\|^2\\
		\leq &\frac{2L^2\sigma^2}{1-\sigma^2}\left(8+6\alpha^2L^2\Psi^2\right)\|\m{x}^t-\m{M}\m{x}^t\|^{2}
		+\left( \frac{12\alpha^2L^2\Psi^2\sigma^2}{1-\sigma^2}+\frac{1+\sigma^2}{2}\right) \|\m{v}^t-\m{M}\m{v}^t\|^2\\
		&+\frac{24\alpha^2L^3\Psi^2 \sigma^2}{1-\sigma^2}n(F(\bar{\m{x}}^t)-F(\m{z}^*)),
	\end{align*}
	where we use Lemma~\ref{property W} in the second inequality and
	the $L$-Lipschitz continuity of $\nabla f$ in the third inequality.
Then, the above inequality and Lemma~\ref{lem2.9} give \eqref{www-ddd}.
\end{proof}

So far, we have established the bounds on the consensus error $\|\m{x}^{t+1}-\m{M}\m{x}^{t+1}\|^{2} $,  the objective function value gap $F(\bar{\m{x}}^{t+1})-F(\m{z}^*)$, and the gradient tracking error $\|\m{v}^{t+1}-\m{M}\m{v}^{t+1}\|^2 $. We now stack the three  errors together
and define the total error vector
\begin{equation}\label{def-ut}
	\m{u}^{t+1} =\left[\begin{array}{c}
		\|\m{x}^{t+1}-\m{M}\m{x}^{t+1}\|^{2}   \\
		n\left(F(\bar{\m{x}}^{t+1})-F(\m{z}^*) \right)  \\
		\|\m{v}^{t+1}-\m{M}\m{v}^{t+1}\|^2 
	\end{array}\right] .
\end{equation}
Then, $\m{u}^t$ measures the error between $\m{x}_i^t$ and $\m{z}^*$,
 since $\m{u}^t=\m{0}$ implies that $\m{x}_i^t=\bar{\m{x}}^t=\m{z}^*$ for all $i = 1, \ldots, n$.
Now the following theorem shows under proper choice of stepsize $\alpha$, 
the error $\|\m{u}\|^t$ will goes to zero linearly.

\begin{theorem}\label{DMBFGS-Thm}
	Under Assumptions \ref{as0}, \ref{as2}, and \ref{as3}, we have
	\begin{equation}\label{u-retation}
		\m{u}^{t+1} \le \m{J} \m{u}^t,
	\end{equation}
	where
	\begin{align}\label{matrix}
		\m{J} = 
		\Bigg[ {\begin{array}{*{20}{c}}
				{\frac{1+\sigma^{2}}{2}+\frac{6\sigma^2\Psi^2\alpha^{2}L^2}{1-\sigma^{2}}
				}&{\frac{12\sigma^2\Psi^2\alpha^{2}L}{1-\sigma^{2}}}&{\frac{6\sigma^2\Psi^2\alpha^{2}}{1-\sigma^{2}}}\\
				{\frac{\alpha L^2\Psi^2}{\psi}+\frac{3L^3\Psi^2\alpha^{2}}{2}}&{1-(\psi\alpha-3\alpha^2L\Psi^2)\mu}&{\frac{\alpha\Psi^2}{\psi}+\frac{3L\Psi^2\alpha^{2}}{2}}\\
				{\frac{2L^2\sigma^2}{1-\sigma^2}\left(8+6\alpha^2L^2\Psi^2\right)}&{\frac{24\alpha^2L^3\Psi^2 \sigma^2}{1-\sigma^2}}&{ \frac{12\alpha^2L^2\Psi^2\sigma^2}{1-\sigma^2}+\frac{1+\sigma^2}{2}}
		\end{array}}\Bigg].
	\end{align}
	If the stepsize $\alpha$ satisfies
	\begin{align} \label{alpha-0}
	 0<	\alpha \leq \sqrt{\frac{1}{3916}}\frac{(1-\sigma^2)^2}{ L\Psi \kappa_H }\sqrt{\frac{1}{\kappa_f}}.
	\end{align}
	then the spectral radius $\rho(\m{J})$ of $\m{J}$ will satisfy
	\begin{equation}\label{rho-J}
	 \rho(\m{J}) \le 1- \frac{1}{3916} \frac{(1-\sigma^2)^2}{\kappa_f^2 \kappa_H^2},
     \end{equation}   	 
	 and $\sum_{i}^n \|\m{x}_i^t-\m{z}^*\|$ converges to zero linearly with rate
	 $\rho(\m{J}) \in (0, 1)$.\\
    If, in particular, the network $\mathcal{G}$ is fully connected, i.e., $\sigma=0$,
    and the stepsize $\alpha$ satisfies
	\begin{equation} \label{alpha-1}
		0< \alpha \leq \frac{1}{15 L \Psi \kappa_H},
	\end{equation}
   we have $\rho(J) \le 1-\frac{1}{225 \kappa_H^2\kappa_f}$.
\end{theorem}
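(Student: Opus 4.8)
\emph{Step 1: reduce to bounding $\rho(\m{J})$.} First I would note that \eqref{u-retation} itself is essentially immediate: Lemmas~\ref{lm-xxx-aaa}, \ref{lm-xyz-abc} and \ref{lm-www-ddd} are precisely the first, second and third scalar components of $\m{u}^{t+1}\le\m{J}\m{u}^t$, once one reads off the coefficients and recalls the definition \eqref{def-ut} of $\m{u}^t$. Since every entry of $\m{J}$ is nonnegative and $\m{u}^0\ge\m{0}$ componentwise, an easy induction gives $\m{u}^t\le\m{J}^t\m{u}^0$ for all $t$. Hence the whole theorem reduces to (i) bounding $\rho(\m{J})$ and (ii) converting such a bound into $R$-linear convergence of $\sum_i\|\m{x}_i^t-\m{z}^*\|$.

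\emph{Step 2: the main step --- bounding $\rho(\m{J})$ by a positive eigenvector.} The plan is to invoke Lemma~\ref{important1}: it suffices to produce a positive vector $\m{m}=(m_1,m_2,m_3)\tr$ and a scalar $\omega<1$ with $\m{J}\m{m}\le\omega\m{m}$ componentwise, which then forces $\rho(\m{J})\le\omega$. The right scaling of $\m{m}$ is dictated by the structure of \eqref{matrix}: $J_{11},J_{33}=\tfrac{1+\sigma^2}{2}+O(\alpha^2)$, so rows $1$ and $3$ carry $\Theta(1-\sigma^2)$ of slack; the entry $J_{31}=\tfrac{2L^2\sigma^2}{1-\sigma^2}(8+6\alpha^2L^2\Psi^2)$ is $\Theta(1)$, \emph{not} small; the off-diagonals $J_{12},J_{13},J_{32}$ are $O(\alpha^2)$ and $J_{21},J_{23}$ are $O(\alpha)$; while $J_{22}=1-\alpha\mu(\psi-3\alpha L\Psi^2)$ is the only entry near $1$. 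I would therefore take $m_1$ of order $\alpha^2$, so that $J_{12}m_2+J_{13}m_3=O(\alpha^2)(m_2+m_3)$ is absorbed by the $\Theta(1-\sigma^2)$ slack $\omega-J_{11}$ in row $1$; then $m_3$ of order $\tfrac{L^2\sigma^2}{(1-\sigma^2)^2}m_1$, so that the $\Theta(1)$ term $J_{31}m_1$ is absorbed by $(\omega-J_{33})m_3$ in row $3$; and $m_2$ the largest component, so that in row $2$ the term $J_{21}m_1+J_{23}m_3=O(\alpha^3)m_2$ fits inside $(\omega-J_{22})m_2=\big((1-J_{22})-(1-\omega)\big)m_2$. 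Using the stepsize bound \eqref{alpha-0} (in particular $\alpha\le\tfrac{\psi}{6L\Psi^2}$) one has $1-J_{22}=\alpha\mu(\psi-3\alpha L\Psi^2)\ge\tfrac12\psi\mu\alpha>0$; choosing $1-\omega$ to be a fixed fraction of this quantity and carrying out the explicit accounting then yields $\omega=1-\tfrac{1}{3916}\tfrac{(1-\sigma^2)^2}{\kappa_f^2\kappa_H^2}$, i.e.~\eqref{rho-J} (with $\alpha$ taken at the upper limit of \eqref{alpha-0}; for smaller $\alpha$ the same $\m{m}$ still gives $\rho(\m{J})<1$).

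\emph{Step 3: the fully connected case and convergence of the iterates.} When $\sigma=0$ the matrix \eqref{matrix} collapses: $J_{12}=J_{13}=J_{31}=J_{32}=0$ and $J_{11}=J_{33}=\tfrac12$, so $\m{J}$ is block lower triangular and $\rho(\m{J})=\max\{\tfrac12,J_{22}\}$ directly, with no eigenvector argument needed. Then $\alpha\le\tfrac{1}{15L\Psi\kappa_H}=\tfrac{\psi}{15L\Psi^2}$ gives $3\alpha^2L\Psi^2\le\tfrac15\psi\alpha$, hence $J_{22}\le1-\tfrac45\psi\mu\alpha$, and at the boundary stepsize $\rho(\m{J})\le1-\tfrac{4}{75}\tfrac{1}{\kappa_H^2\kappa_f}\le1-\tfrac{1}{225}\tfrac{1}{\kappa_H^2\kappa_f}$. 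Finally, to pass from $\rho(\m{J})$ to the iterates I would use that $\m{J}\m{m}\le\omega\m{m}$ with $\m{m}>0$ implies $\m{J}^t\m{m}\le\omega^t\m{m}$, so $\m{u}^t\le\m{J}^t\m{u}^0$ yields $u_i^t\le C\,\omega^t$ for a constant $C$ depending on $\m{u}^0$ and $\m{m}$; in particular $\|\m{x}^t-\m{M}\m{x}^t\|^2$ and $F(\bar{\m{x}}^t)-F(\m{z}^*)$ decay like $\omega^t$. Combining $\|\m{x}_i^t-\m{z}^*\|\le\|\m{x}_i^t-\bar{\m{x}}^t\|+\|\bar{\m{x}}^t-\m{z}^*\|$ with $\sum_i\|\m{x}_i^t-\bar{\m{x}}^t\|\le\sqrt{n}\,\|\m{x}^t-\m{M}\m{x}^t\|$ and $\|\bar{\m{x}}^t-\m{z}^*\|\le\sqrt{\tfrac{2}{\mu}(F(\bar{\m{x}}^t)-F(\m{z}^*))}$ (from $\mu$-strong convexity, Assumption~\ref{as2}) gives $\sum_i\|\m{x}_i^t-\m{z}^*\|=O(\rho(\m{J})^{t/2})$, i.e.~linear convergence governed by $\rho(\m{J})\in(0,1)$.

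\emph{Main obstacle.} Everything outside Step~2 is routine. The crux is the correct ansatz for $\m{m}$ --- essentially $m_1\sim\alpha^2$, $m_3\sim\tfrac{L^2\sigma^2}{(1-\sigma^2)^2}m_1$, and $m_2$ largest --- together with the explicit bookkeeping that balances the $O(\alpha^2)$ and $O(\alpha^3)$ residuals against the $\Theta(1-\sigma^2)$ slack in rows $1$ and $3$ and the $\Theta(\psi\mu\alpha)$ slack in row $2$. That balance is exactly what pins down the precise form of the stepsize condition \eqref{alpha-0} and the constant $3916$, so I expect getting a clean closed-form $\omega$ (rather than just $\omega<1$) to be where most of the work goes.
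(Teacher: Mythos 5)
Your proposal follows essentially the same route as the paper: establish \eqref{u-retation} from Lemmas~\ref{lm-xxx-aaa}, \ref{lm-xyz-abc}, \ref{lm-www-ddd}, then bound $\rho(\m{J})$ via Lemma~\ref{important1} by exhibiting a positive vector $\m{m}$ with $\m{J}\m{m}\le\omega\m{m}$ (the paper takes $v_1\propto\frac{(1-\sigma^2)^2}{L^2\sigma^2}v_3$ and $v_2\propto\frac{\kappa_H^2}{\mu}v_3$ with $\omega=1-\theta\Psi^2\alpha^2$, $\theta=\frac{\mu L}{(1-\sigma^2)^2}$ --- an $\alpha$-independent test vector, whereas your ansatz lets $m_1/m_2\sim\alpha^2$, a cosmetic difference that shifts which rows generate the stepsize restriction but leads to the same kind of bound), and finally convert to linear decay of the iterates via consensus error plus $\mu$-strong convexity, exactly as in the paper. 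Your observations that the $\sigma=0$ case reduces to $\rho(\m{J})=\max\{\tfrac12,J_{22}\}$ and that the stated rate is attained only at the upper end of the stepsize range are both consistent with (indeed slightly sharper than) what the paper does.
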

\begin{proof}
    First, \eqref{u-retation} follows directly from Lemma~\ref{lm-xxx-aaa},  Lemma~\ref{lm-xyz-abc}
    and Lemma~\ref{lm-www-ddd}.

	We now find a range of $\alpha$ such that $\rho(\m{J})<1$. 
    By Lemma~\ref{important1}, if we can find a positive vector $\m{v} = [v_1;v_2;v_3] \in \R^3$	
    such that 
\begin{equation}\label{J-eq-1}    
    \m{J}\m{v}\leq(1-\theta\Psi^2\alpha^2)\m{v}
\end{equation}    
     for some constant $\theta >0$, then we will have $\rho(\m{J})\le 1-\theta\Psi^2\alpha^2 < 1$.
	Note that inequalities \eqref{J-eq-1} can be equivalently written as
	\begin{align*}
		\left\{\begin{array}{cl}
			&\frac{12\sigma^2\Psi^2\alpha^{2}L}{1-\sigma^{2}}v_2+\frac{6\sigma^2\Psi^2\alpha^{2}}{1-\sigma^{2}}v_3
			\leq\left(1-\theta\Psi^2\alpha^2-\frac{1+\sigma^{2}}{2}-\frac{6\sigma^2\Psi^2\alpha^{2}}{1-\sigma^{2}}\right)v_1,\\
			&\left( \frac{\alpha L^2\Psi^2}{\psi}+\frac{3L^3\Psi^2\alpha^{2}}{2}\right) v_1+\left( \frac{\alpha\Psi^2}{\psi}+\frac{3L\Psi^2\alpha^{2}}{2}\right) v_3
			\leq\left( \left( \psi\alpha- 3\alpha^2L\Psi^2\right) \mu-\theta\Psi^2\alpha^2\right)  v_2,\\
			&\frac{2L^2\sigma^2}{1-\sigma^2}\left(8+6\alpha^2L^2\Psi^2\right)v_1+\frac{24\alpha^2L^3\Psi^2 \sigma^2}{1-\sigma^2}v_2
			\leq \left(\frac{1-\sigma^2}{2}-\theta\Psi^2\alpha^2- \frac{12\alpha^2L^2\Psi^2\sigma^2}{1-\sigma^2}\right)v_3,
		\end{array}\right.
	\end{align*}
which can be reorganized as
	\begin{align}\label{system}
	\left\{\begin{array}{cl}
		&\left(\frac{12\sigma^2\Psi^2L}{1-\sigma^{2}}v_2+\frac{6\sigma^2\Psi^2}{1-\sigma^{2}}v_3+\theta\Psi^2 v_1+\frac{6\sigma^2\Psi^2}{1-\sigma^{2}}v_1\right)\alpha^2 \leq \frac{1-\sigma^{2}}{2}v_1,\\
		&\left(\frac{3L^3\Psi^2}{2}v_1+\frac{3L\Psi^2}{2}v_3+3\mu L\Psi^2 v_2+\theta\Psi^2 v_2\right)\alpha^2 \leq \left(\psi\mu v_2-\frac{ L^2\Psi^2}{\psi}v_2- \frac{\Psi^2}{\psi}v_3\right)\alpha,\\
		&\left(\frac{12L^4\Psi^2\sigma^2}{1-\sigma^2}v_1+\frac{24L^3\Psi^2 \sigma^2}{1-\sigma^2}v_2+\theta\Psi^2v_3+\frac{12L^2\Psi^2\sigma^2}{1-\sigma^2}v_3\right)\alpha^2 \leq  \frac{1-\sigma^{2}}{2}v_3-\frac{16L^2\sigma^2}{1-\sigma^2}v_1.
	\end{array}\right.
\end{align}
	Let us choose vector $\m{v} = [v_1;v_2;v_3] > \m{0}$ such that 
		\begin{equation}\label{v-system}
		\left\{\begin{array}{cl}
			\mu v_2- L^2 \kappa_H^2 v_1-\kappa_H^2v_3>0, \\
			(1-\sigma^2)^2v_3-16L^2\sigma^2v_1>0.
		\end{array}\right.
	\end{equation}
	Then, we can see that the inequality system \eqref{system} will hold for all 
	 sufficiently small $\alpha >0$. 
	 In particular, by taking $v_1=\frac{(1-\sigma^2)^2}{32L^2\sigma^2}v_3$ and
	 $v_2=\frac{\kappa_H^2}{\mu}\left(\frac{(1-\sigma^2)^2}{32\sigma^2}+2\right)v_3$ for some 
	 $v_3 >0$, we will have \eqref{v-system} hold and
	  the inequality system \eqref{system} holds as long as 
	\begin{align}\label{alpha-range}
0< \alpha \leq \min \Bigg\{ &\frac{(1-\sigma^2)^{2}}{L\Psi \kappa_H \sigma}\sqrt{\frac{1}{978+\theta\frac{(1-\sigma^2)^3\mu}{L^3\kappa_H^2\sigma^2}}}\sqrt{\frac{\mu}{L}}, \; \frac{64}{L\Psi \kappa_H \left(489+\frac{130\theta}{\mu L}\right)}, \nonumber \\
		&\frac{2(1-\sigma^2)^{1.5}}{L\Psi \kappa_H \sigma}\sqrt{\frac{1}{489+\theta\frac{8(1-\sigma^2)\mu}{L^3\kappa_H^2\sigma^2}}}\sqrt{\frac{\mu}{L}}\Bigg\}.
	\end{align}
By setting $\theta=\frac{\mu L}{(1-\sigma^2)^2}$ and some simple calculations, we will see that \eqref{alpha-range} will be satisfied as long as $\alpha$ satisfies condition \eqref{alpha-0},
which ensures $\rho(\m{J}) \le 1-\theta\Psi^2 \alpha^2$ and therefore, \eqref{rho-J} holds. 
Then, by the definition of $\m{u}^t$ in \eqref{def-ut} and  \eqref{rho-J}, we have
\begin{align*}
	& \sum_{i=1}^n \|\m{x}_i^t-{\m{z}}^*\|^2 = \|\m{x}^t-\tilde{\m{z}}^*\|^2 \\
\leq&\|\m{x}^t-\m{M}\m{x}^t\|^2+\|\m{M}\m{x}^t-\tilde{\m{z}}^*\|^2
	= \|\m{x}^t-\m{M}\m{x}^t\|^2+n\|\bar{\m{x}}^t-{\m{z}}^*\|^2\\
	\leq&\|\m{x}^t-\m{M}\m{x}^t\|^2+\frac{2 }{\mu}n\left(F(\bar{\m{x}}^{t})-F(\m{z}^*)\right)\\
	\le & \max \{1, \sqrt{2}/\mu \} \|\m{u}^t\|_{\infty} \le \max \{1, \sqrt{2}/\mu \}  
	\|\m{u}^0\| \rho(\m{J})^t,
\end{align*}
where $\tilde{\m{z}}^*=[\m{z}^*;\ldots;\m{z}^*] \in \R^{np}$;
 the second inequality is due to the $\mu$-strong convexity of $F$ 
 and the last inequality follows from \eqref{u-retation}.
Hence, $\sum_{i}^n \|\m{x}_i^t-\m{z}^*\|$ converges to zero linearly with rate
	 $\rho(\m{J}) \in (0, 1)$.

Now, if the network is fully connected, i.e., $\sigma=0$ , the inequality \eqref{system} can be simplified as
	 \begin{align}\label{system1}
	 		\left\{\begin{array}{cl}
	 		&\theta\Psi^2\alpha^2 \leq \frac{1}{2},\\
	 		&\left( \frac{\alpha L^2\Psi^2}{\psi}+\frac{3L^3\Psi^2\alpha^{2}}{2}\right) v_1+\left( \frac{\alpha\Psi^2}{\psi}+\frac{3L\Psi^2\alpha^{2}}{2}\right) v_3\\
	 		&\leq\left( \left( \psi\alpha-3\alpha^2L\Psi^2 \right) \mu-\theta\Psi^2\alpha^2\right)  v_2.
	 	\end{array}\right.
	 \end{align}
Hence, if we  choose vector $\m{v} = [v_1;v_2;v_3] > \m{0}$ such that 	 
 \begin{equation}\label{v-system1}
 		\mu v_2-L^2 \kappa_H^2 v_1- \kappa_H^2v_3>0,
 \end{equation}
the inequality system \eqref{system1} will hold for all sufficiently small $\alpha >0$.
In particular,  by  taking  $v_1=v_3/L^2$ and $v_2= 3 \kappa_H^2 v_3 /\mu$ for some $v_3 >0$, and setting  $\theta=\mu L$
we will have  \eqref{v-system1} hold and the inequality system \eqref{system1} holds
as long as $\alpha$ satisfies \eqref{alpha-1}, which ensures $\rho(\m{J}) \le 1-\theta\Psi^2 \alpha^2 \le 1-\frac{1}{225 \kappa_H^2\kappa_f}$.
\end{proof}

\begin{remark} We have the following comments for DMBFGS.
\begin{itemize}
\item[(a)]	To reach $\epsilon$-accuracy, that is, $\sum_{i}^n \|\m{x}_i^t-\m{z}^*\| \le \epsilon$
for some $\epsilon >0$, by Theorem~\ref{DMBFGS-Thm},
the number of iterations needed by DMBFGS is $\mathcal{O}\left(\frac{\kappa_f^2 \kappa_H^2 }{(1-\sigma^2)^2}log\left(\frac{1}{\epsilon}\right)\right)$ and will be improved to
 $\mathcal{O}\left(\kappa_f\kappa_H^2log\left(\frac{1}{\epsilon}\right)\right)$ when the network is fully connected, i.e., $\sigma =0$.
 On the other hand, the iterative complexity of D-LM-BFGS \cite{zhang2023variance} is
  always $\mathcal{O}\left(\frac{\kappa_f^2 \kappa_H^2 }{(1-\sigma^2)^2}log\left(\frac{1}{\epsilon}\right)\right)$.
\item[(b)]	
	Considering the case that $\m{H}^t=\m{I}$, i.e., $\kappa_H=1$, DMBFGS will reduce to
	some ATC GT method in \cite{nedic2017geometrically},
	which is shown to have  the iterative complexity $\mathcal{O}\left(\frac{\kappa_f^{1.5} n^{0.5} }{(1-\sigma)^2}log\left(\frac{1}{\epsilon}\right)\right)$.
    In this case, we show the iteration complexity of DMBFGS is  
    $\mathcal{O}\left(\frac{\kappa_f^2 }{(1-\sigma^2)^2}log\left(\frac{1}{\epsilon}\right)\right)$,
    which does not depend on the number of nodes $n$ while is more sensitive to
     the condition number $\kappa_f$. 
   On the other hand, for fully connected network,  both \cite{nedic2017geometrically} 
   and this paper give the iteration complexity $\mathcal{O}\left(\kappa_flog\left(\frac{1}{\epsilon}\right)\right)$, which matches that of the centralized steepest descent method for smooth and
   strongly convex optimization.
	\end{itemize}
\end{remark}

\section{NUMERICAL EXPERIMENTS}
In this section, we would like to test and compare our developed NDCG and DMBFGS with 
some well-developed first-order algorithms 
on solving nonconvex and strongly convex optimization problems, respectively, over a connected 
undirected network with edge density $d \in (0,1]$.
For the generated network $\mathcal{G}=\left(\mathcal{V},\mathcal{E}\right)$, we choose the Metropolis constant edge weight matrix \cite{xiao2007distributed} as the mixing matrix, that is
\begin{equation*}
	\tilde{W}_{i j}=\left\{\begin{array}{cl}
		\frac{1}{\max \{\operatorname{deg}(i), \operatorname{deg}(j)\}+1}, & \text { if }(i, j) \in \mathcal{E}, \\
		0, & \text { if }(i, j) \notin \mathcal{E} \text { and } i \neq j, \\
		1-\sum_{k \in \mathcal{N}_i/ \{i\}} \tilde{W}_{i k}, & \text { if } i=j,
	\end{array}\right.
\end{equation*}
where $(i, j) \in \mathcal{E}$ indicates there is an edge between 
node $i$ and node $j$, and $\operatorname{deg}(i)$ means the degree of node $i$. 
The communication volume in our experiments is calculated as
\begin{align*}\notag
	\text{Communication volume} 
       = &\text{~number of iterations} \\
	&\times \text{number of communication rounds per iteration}\\
	&\times \text{number of edges, i.e., }dn(n-1)/2 \\
	& \times \text{dimension of transmitted vectors on each edge}.
\end{align*}
In all experiments, we set the number of nodes $n=10$ and the edge density $d=0.56$ for the network,
and use initial point $\m{x}^0=\m{0}$ for all comparison algorithms.
All experiments are coded in MATLAB R2017b and run on a laptop with Intel Core i5-9300H CPU, 16GB RAM, and Windows 10 operating system.

\subsection{Experiments for NDCG}
We consider the nonconvex decentralized binary classification problem, which uses
 a logistic regression formulation with a nonconvex regularization as
\begin{equation}\label{noncovex_logistic_problem}
	\mathop {\min }\limits_{\m{z} \in {\R^p}} \sum_{i=1}^n  \sum_{j=1}^{n_i} \log \left(1+\exp (-b_{ij} \m{a}_{ij}\tr \m{z} ) \right)+\hat{\lambda} \sum_{k=1}^p \frac{\m{z}_{[k]}^2}{1+\m{z}_{[k]}^2},
\end{equation}
where $\m{a}_{ij} \in \R^p$ are the feature vectors, $ b_{ij} \in \{-1,+1\}$ are the labels, $\m{z}_{[k]}$ denotes the $k$-th component of the vector $\m{z}$, and $\hat{\lambda}>0$ is the regularization parameter.
The experiments are conducted with regularization parameter $\hat{\lambda}=1$ on four datasets in Table~\ref{table2}
from the LIBSVM library: \textbf{mushroom}, \textbf{ijcnn1}, \textbf{w8a} and \textbf{a9a}.
We would compare NDCG, i.e., Alg.~\ref{alg:Framwork+}, with the following algorithms:
some GT-type method \cite{lu2019gnsd} (GT for short in Section 3.1), Global Update Tracking ({GUT}) \cite{NEURIPS2023_98f8c89a}, {MT} \cite{takezawa2022momentum} and {DSMT} \cite{huang2024accelerated}.
Although {GUT}, {MT}, and {DSMT} are stochastic methods, since we are focusing on comparing 
deterministic decentralized methods, full gradient is used for these methods. 
From the first-order optimality condition  \eqref{stationarity}, 
we define the optimality error at $\m{x}^t$ as 
\begin{equation*}
	\mbox{Optimality error} := \left\| \frac{1}{n}\sum_{i=1}^n\nabla f_i(\m{x}^t_{i}) \right\|+\|\m{x}^t-\m{M}\m{x}^t\|,
\end{equation*}
where $t$ is the iteration number.
\begin{table}[H]
	\caption{Datasets}\label{table2}
	\centering
	\begin{tabular}{c|c|c}
		\hline
		{Dataset}&\# of samples ($\sum_{i=1}^n n_i$)  &\# of features ($p$)  \\
		\hline
		\textbf{mushroom}&8120 &112\\
		\hline
		\textbf{ijcnn1}&49990&22\\
		\hline
		\textbf{w8a}&49740&300\\
		\hline
		\textbf{a9a}&32560&123\\
		\hline
	\end{tabular}
\end{table}
All algorithm parameters are set according to their best performance for the data set
 \textbf{mushroom} (\textbf{ijcnn1}; \textbf{w8a}; \textbf{a9a}). 
In particular, using the same notation in their source papers, we  
set $\eta=0.06 (0.09; 0.09; 0.08)$ for GT, $\eta_t=0.03\sqrt{\frac{n}{t}}$ and $\mu=0.3 (0.3; 0.3; 0.2)$ for GUT, $\eta=0.05$ and $\beta=0.31 (0.41; 0.33; 0.35)$ for MT, $\eta_{w}=1/(1+\sqrt{1-\sigma^2})$, $\beta=1-(1-\sqrt{\eta_{w}})/n^{1/3}$ and $\alpha=0.04 (0.08; 0.04; 0.08)$ for DSMT, where $\sigma$ is
defined in Lemma~\ref{property W}, and $\alpha=0.08 (0.11; 0.1; 0.1)$ for NDCG.

All algorithms except GUT need two rounds of communication per iteration. GUT needs only one round communication per iteration but uses a decreasing stepsize, which yields slow convergence as shown in
Fig.~\ref{noncovex}. Hence, only optimality error against communication volume is shown in 
Fig.~\ref{noncovex}, where we can see NDCG is significantly better than GT and
 outperforms the momentum-based methods MT and DSMT for this set of nonconvex classification problems. 
%
%

\begin{figure*}[!t]
	\centering
	\subfloat[\textbf{mushroom}]{\includegraphics[width=2.5in]{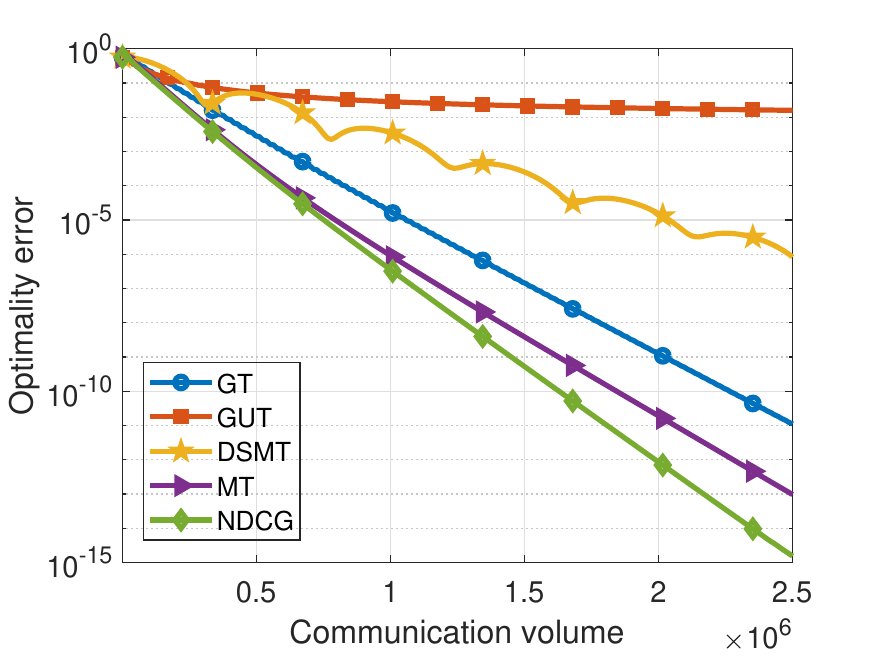}%
		\label{fig_n_a}}
	\subfloat[\textbf{ijcnn1}]{\includegraphics[width=2.5in]{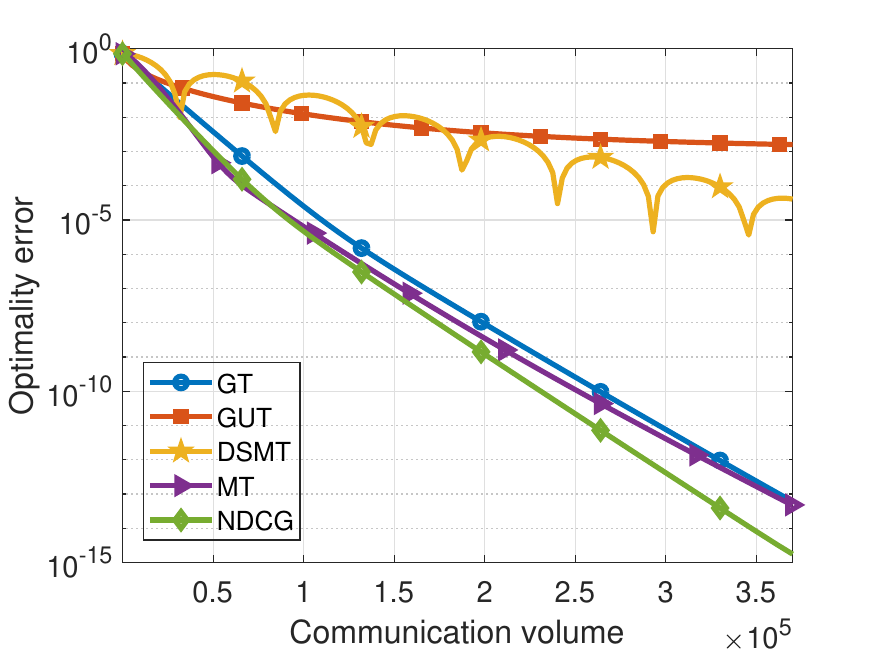}%
		\label{fig_n_b}}
	\hfil
	\subfloat[\textbf{w8a}]{\includegraphics[width=2.5in]{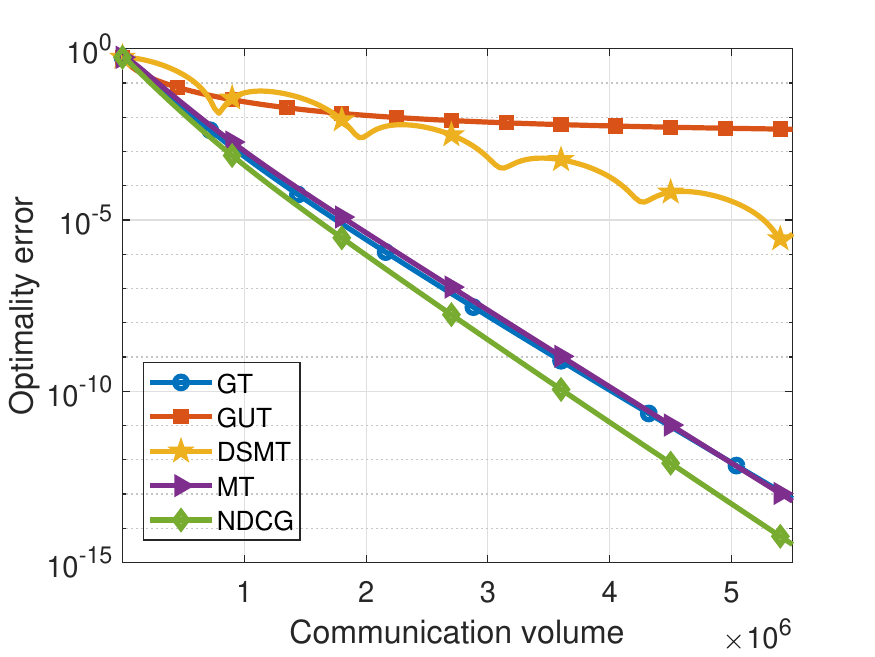}%
		\label{fig_n_c}}
	\subfloat[\textbf{a9a}]{\includegraphics[width=2.5in]{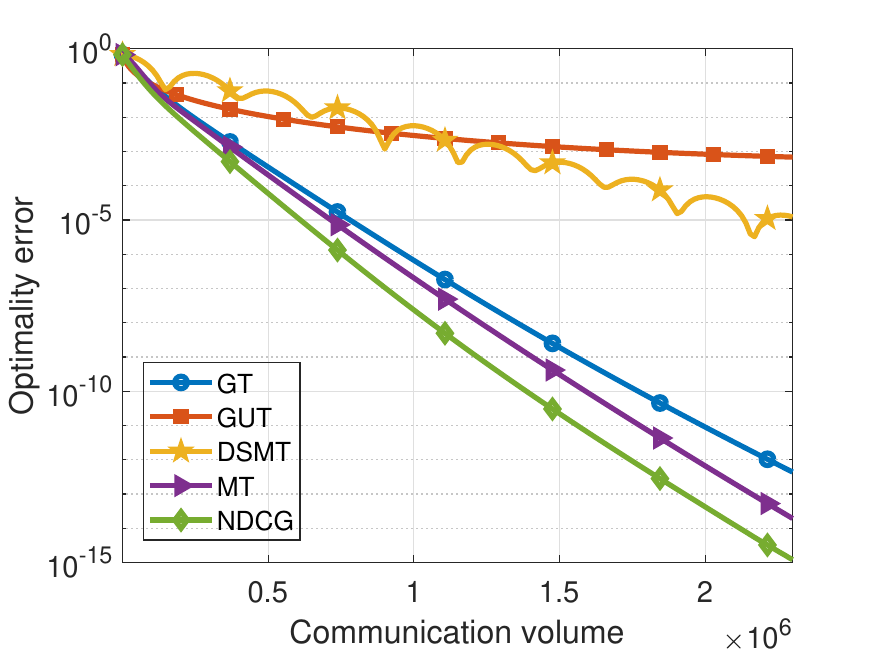}%
		\label{fig_n_d}}
	\caption{Optimality error of comparison algorithms for minimizing the nonconvex logistic regression problem \eqref{noncovex_logistic_problem} on different datasets.}
	\label{noncovex}
\end{figure*}

\subsection{Experiments for DMBFGS}
In this subsection, we choose strongly convex smooth test problems
including the linear regression problem and 
the logistic regression problem with a strongly convex regularization. 
We would compare {DMBFGS}, i.e., Alg.~\ref{alg:Framwork1}, with the following
algorithms: 
\begin{itemize}
	\item Gradient-based methods including some GT-type method \cite{qu2017harnessing} (GT for short in Section 3.2), {ABm} \cite{xin2019distributed}, and {OGT} \cite{song2024optimal};
	\item Quasi-Newton methods including DR-LM-DFP \cite{zhang2023variance} and D-LM-BFGS \cite{zhang2023variance}.
\end{itemize}
Since randomization techniques are used in OGT, we would run OGT $5$ times and record the best
and average performance as OGT(BEST) and OGT(AVG), respectively.
By the strong convexity, there is only one unique optimal solution $\m{z}^*$
for each testing problem. Hence, the optimal error at $\m{x}^t$ is measured as
\begin{equation}\label{rel_error}
	\mbox{Relative error} :=	\frac{1}{n}\sum_{i=1}^n\frac{\|\m{x}^t_i-\m{z}^*\|}{\|\m{z}^*\|+1},
\end{equation}
where the true solution $\m{z}^*$ is explicitly obtained for the linear regression problem
and is pre-computed for the logistic regression problem.

\subsubsection{Linear regression problem}
We first use linear regression problem to test how the objective function condition number $\kappa_f$ 
would affect the algorithm's performance. We compare DMBFGS with 
 gradient-based algorithms: {GT} \cite{qu2017harnessing}, {ABm} \cite{xin2019distributed}, and {OGT} \cite{song2024optimal}. In particular, we consider
\begin{equation}\label{linear_problem}
	\mathop {\min }\limits_{\m{z} \in {\R^p}} \sum_{i=1}^n \frac{1}{2} \m{z}\tr \m{A}_i \m{z} + \m{b}_i\tr \m{z},
\end{equation}
where $\m{A}_i \in \R^{p \times p}$ and $\m{b}_i \in \R^p$ are private data available to node $i$. To control the condition number of problem \eqref{linear_problem}, we generate $\m{A}_i = \m{Q}\tr \operatorname {diag} \{a_1,...,a_p \} \m{Q}$, where $\m{Q}$ is a random orthogonal matrix. We set $a_1=1$ and $a_p$ as an arbitrarily large number, 
and generate $a_j \sim \operatorname{U}(1,2)$ for $j=2, \ldots, p-1$, where $\operatorname{U}(1,2)$ represents the uniform distribution from 1 to 2. So, $\kappa_f=a_p/a_1=a_p$.
In numerical experiments, we set $p=1000$ and $a_p$ = $10^2$, $10^3$, $10^4$, $10^5$, respectively. 

Again, for $\kappa_f=10^2 (10^3; 10^4; 10^5)$, all algorithm parameters are set according to their best performance.
Using the same notation in their source papers, we set $\eta=5\times10^{-3} (5.3\times10^{-4}; 5.3\times10^{-5}; 5.2\times10^{-6})$ for GT, $\alpha=1.2\times10^{-2} (1.1\times10^{-3}; 1.2\times10^{-4}; 1.4\times10^{-5})$ and $\beta=0.71 (0.83; 0.83; 0.83)$ for ABm, $\alpha=0.02$, $\tau=0.7$, $\gamma=\frac{4\alpha}{4-4\tau-3\alpha}$, $\eta=0.21 (2\times10^{-2}; 2.1\times10^{-3}; 2.1\times10^{-4})$, $\beta=8.4\times10^{-2} (2\times10^{-2}; 2.1\times10^{-3}; 1.1\times10^{-4})$, and $p=q=0.3$ for OGT. 
Finally, we set $\alpha=0.52 (9.5\times10^{-2}; 2.2\times10^{-2}; 2.2\times10^{-3})$ for DMBFGS.

We can see from Fig.~\ref{fig_2_a} and Fig.~\ref{fig_3_a} that ABm performs best for the problem with a small condition number, i.e., $\kappa_f=10^2$. However, the performance of ABm degrades
 as the condition number increases.
Although OGT has theoretical worst-case optimal complexity, it does not always
have best practical performance for the linear regression test problems. In addition, note that
three rounds of communication per iteration are needed in OGT while other algorithms only require two rounds of communication. Hence, for the problem with $\kappa_f=10^4$, OGT(BEST) is better than NDCG in terms of iteration number but not as good as NDCG  in terms of communication volume.
Finally, we can see from both Fig.~\ref{condition_number} and Fig.~\ref{condition_number1} that 
DMBFGS is in general quite efficient and more robust to the condition number of the objective function compared to other algorithms.

\begin{figure*}[!t]
	\centering
	\subfloat[$\kappa_f=10^2$]{\includegraphics[width=2.5in]{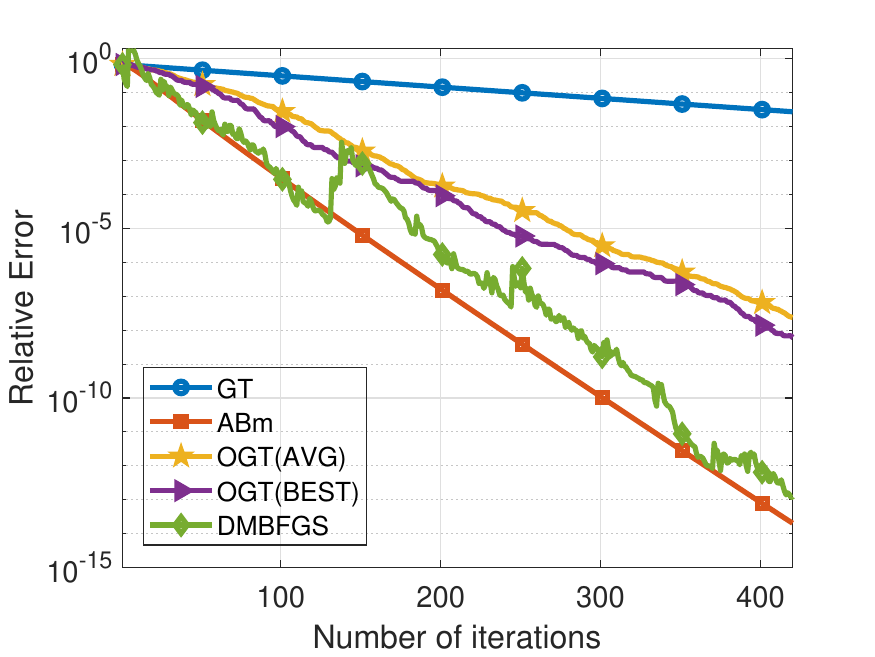}%
		\label{fig_2_a}}
	\subfloat[$\kappa_f=10^3$]{\includegraphics[width=2.5in]{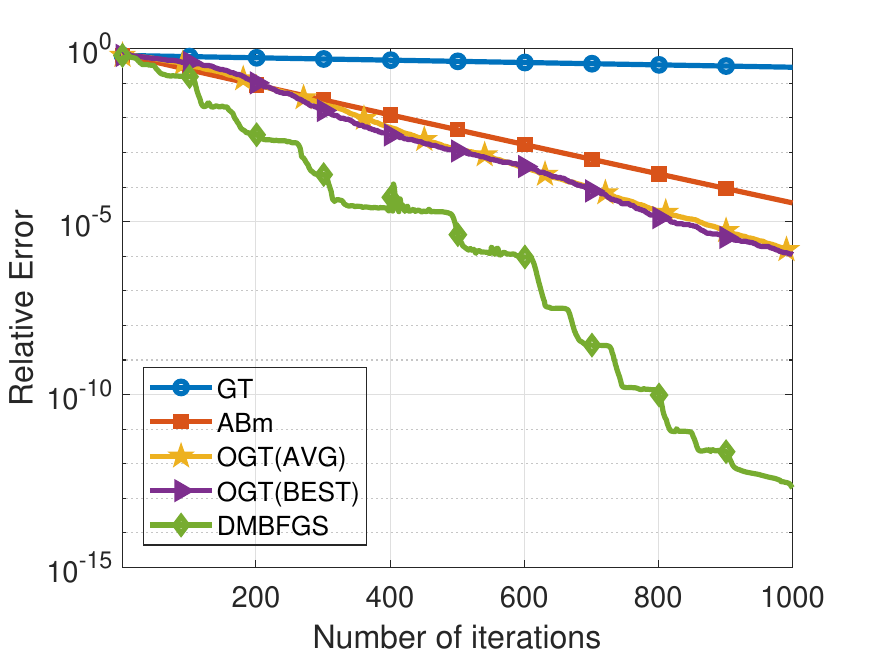}%
		\label{fig_2_b}}
	\hfil
	\subfloat[$\kappa_f=10^4$]{\includegraphics[width=2.5in]{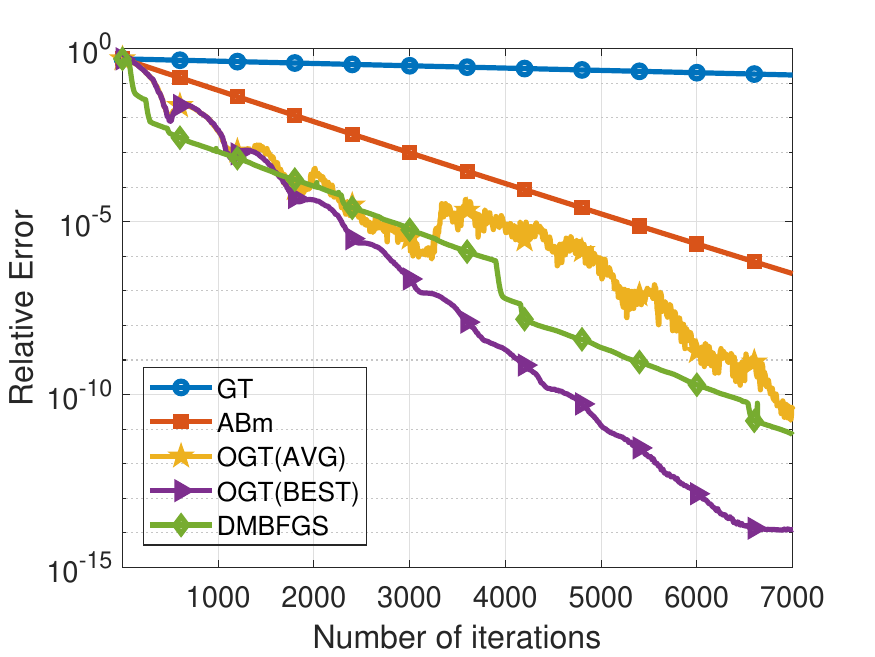}%
		\label{fig_2_c}}
	\subfloat[$\kappa_f=10^5$]{\includegraphics[width=2.5in]{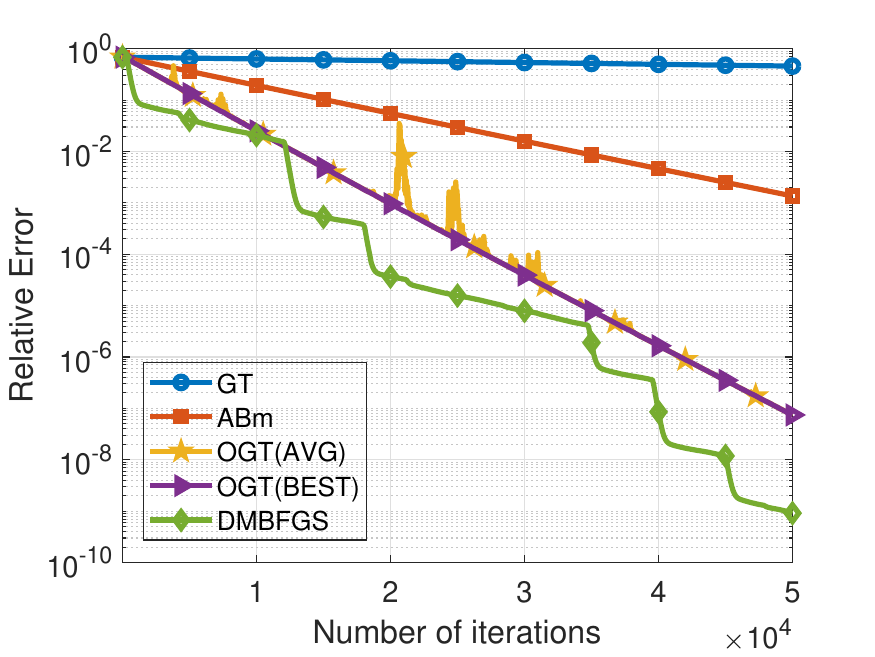}%
		\label{fig_2_d}}
	\caption{Comparisons with gradient-based algorithms for minimizing the strongly convex linear regression problem \eqref{linear_problem} with different condition numbers.}
	\label{condition_number}
\end{figure*}

\begin{figure*}[!t]
	\centering
	\subfloat[$\kappa_f=10^2$]{\includegraphics[width=2.5in]{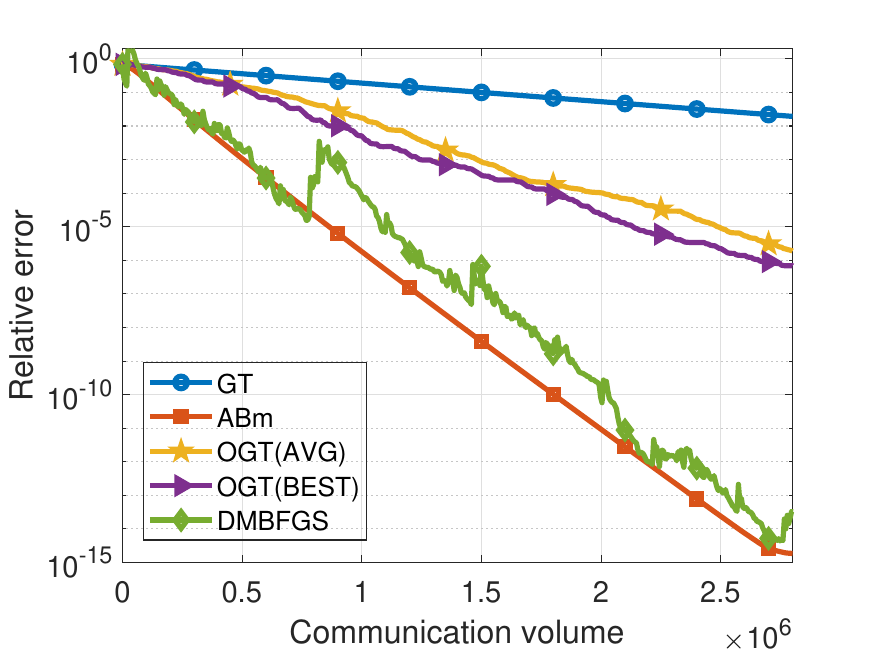}%
		\label{fig_3_a}}
	\subfloat[$\kappa_f=10^3$]{\includegraphics[width=2.5in]{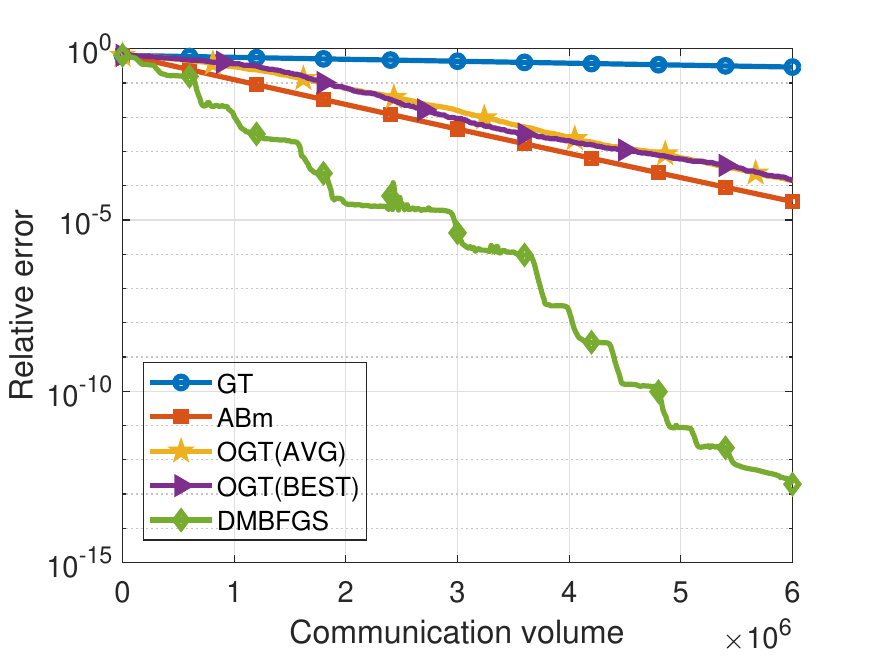}%
		\label{fig_3_b}}
	\hfil
	\subfloat[$\kappa_f=10^4$]{\includegraphics[width=2.5in]{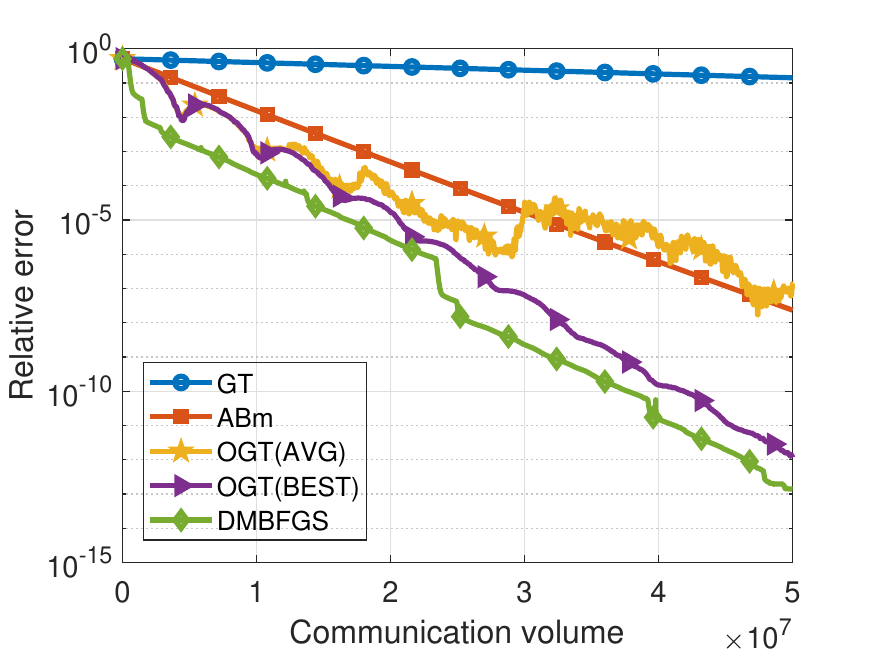}%
		\label{fig_3_c}}	\subfloat[$\kappa_f=10^5$]{\includegraphics[width=2.5in]{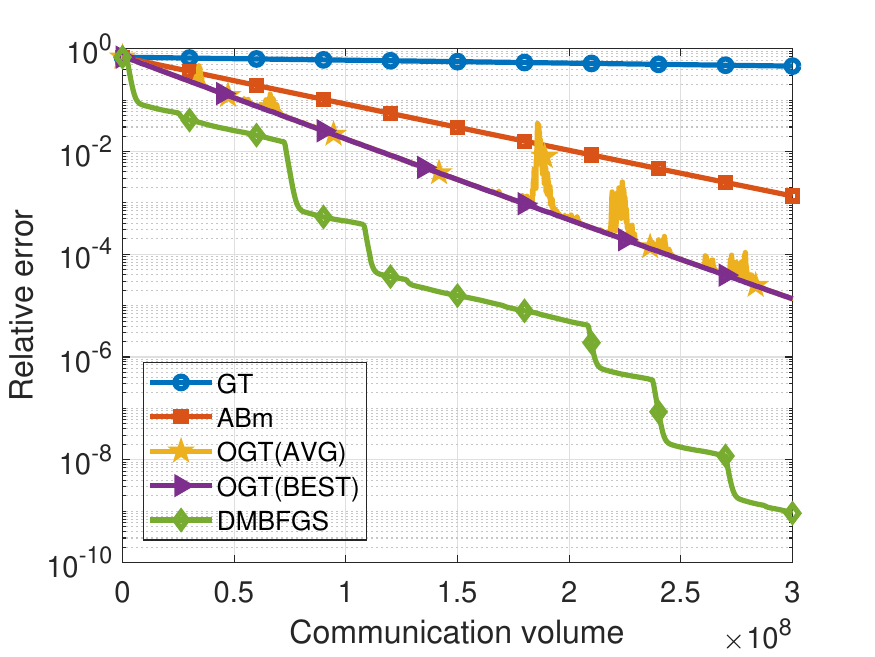}%
		\label{fig_3_d}}
	\caption{Comparisons with gradient-based algorithms for minimizing the strongly convex linear regression problem \eqref{linear_problem} with different condition numbers.}
	\label{condition_number1}
\end{figure*}

\subsubsection{Logistic regression problem}
We consider the logistic regression 
\begin{equation}\label{logistic_problem}
	\mathop {\min }\limits_{\m{z} \in {\R^p}} \sum_{i=1}^n  \sum_{j=1}^{n_i} \log \left(1+\exp (-b_{ij} \m{a}_{ij}\tr \m{z} ) \right)+\frac{ \hat{\lambda}}{2}\|\m{z}\|^2
\end{equation}
with $l_2$-regularization, where $\m{a}_{ij} \in \R^p$ are the feature vectors, $ b_{ij} \in \{-1,+1\}$ are the labels, and $\hat{\lambda}>0$ is the regularization parameter.
The experiments are again conducted on four datasets in Table~\ref{table2}
from the LIBSVM library with $\hat{\lambda}=1$.

We first focus on comparison with gradient-based algorithms ({GT} \cite{qu2017harnessing}, {ABm} \cite{xin2019distributed}, and {OGT} \cite{song2024optimal}). 
Again, all algorithm parameters are set according to their best performance for datasets \textbf{mushroom} (\textbf{ijcnn1}; \textbf{w8a}; \textbf{a9a}).
According to the same notations in their source papers, we set  
$\eta=0.09 (0.12; 0.06; 0.12)$ for GT, $\alpha=0.07 (0.18; 0.07; 0.11)$ and $\beta=0.59 (0.6; 0.58; 0.58)$ for ABm,
 $\alpha=0.2$, $\tau=0.7$, $\gamma=\frac{4\alpha}{4-4\tau-3\alpha}$, $\eta=0.4 (0.6; 0.4; 0.6)$, $\beta=0.2 (0.3; 0.2; 0.3)$, $p=q=0.3$ for OGT and $\alpha=0.18 (0.34; 0.3; 0.32)$ for DMBFGS.

From Fig.~\ref{logistic_iter} and Fig.~\ref{logistic_comm}, we see that DMBFGS is in general more efficient than comparison algorithms in both the iteration number and communication volume
except for the \textbf{mushroom} dataset, where both DMBFGS and ABm perform best and are competitive to each other.
Unlike the linear regression problems, the Hessian of the logistic regression function varies along the iterations.
Thus, we think DMBFGS can more adaptively approximate the Hessian curvature of the objective function and thus yields a better performance.


We now compare DMBFGS with the two well-developed decentralized limited memoryless quasi-Newton algorithms: DR-LM-DFP \cite{zhang2023variance} and D-LM-BFGS \cite{zhang2023variance}.
For datasets \textbf{mushroom} (\textbf{ijcnn1}; \textbf{w8a}; \textbf{a9a}), the algorithm parameters are set for their better performance and use  the same notation in the paper \cite{zhang2023variance}.
In particular, we set $\alpha=0.5 (0.51; 0.52; 0.46)$, $\epsilon=10^{-3}$, $\beta=10^{-3}$, $\C{B}=10^4$, $\tilde{L}=20(20;10;10)$, $M=5 (8; 4; 3)$ for D-LM-BFGS and
 $\alpha=0.04 (0.05; 0.05; 0.05)$, $\rho=0.6 (0.8; 0.8; 0.6)$, $\epsilon=10^{-3}$, $\beta=1$, $\C{B}=10^4$, $\tilde{L}=10(1;1;1)$, $M=6 (5; 5; 5)$ for DR-LM-DFP.

Note that all DR-LM-DFP, D-LM-BFGS and DMBFGS require the same communication cost per iteration. 
We can see from Fig.~\ref{logistic_quasi} that DMBFGS has overall best performance.
We think the reason is that our way of constructing quasi-Newton matrices may utilize more curvature information of the Hessian 
than those obtained by D-LM-BFGS and DR-LM-DFP. In particular, at early iterations, when the average gradient approximation $\m{v}_i^t$ is far from
the exact average gradient $\bar{\m{g}}^t$, the quasi-Newton matrix constructed by  $\check{\m{y}}_i^t = \m{v}_i^{t+1}-\m{v}_i^{t}$ is not a good approximation
of the Hessian and may not even be positive definite. 
In this case, the regularization or damping techniques used by D-LM-BFGS and DR-LM-DFP can not effectively exploit the Hessian curvature information and 
will lead to gradient descent type iterations. 
In contrast, Step 5 of DMBFGS, i.e., Alg.~\ref{alg:Framwork1}, adaptive takes $\hat{\m{y}}_i^t=\m{g}_i^{t+1}-\m{g}_i^{t}$ and $\check{\m{y}}_i^t$ 
to construct quasi-Newton matrix leading to better Hessian curvature approximation.

\begin{figure*}[!t]
	\centering
	\subfloat[\textbf{mushroom}]{\includegraphics[width=2.5in]{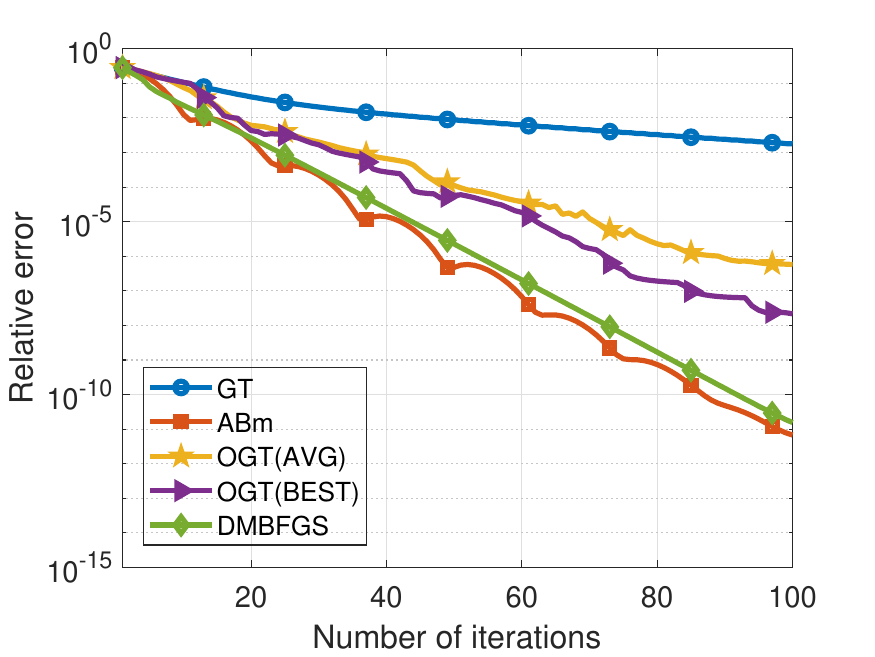}%
		\label{fig_5_a}}
		\subfloat[\textbf{ijcnn1}]{\includegraphics[width=2.5in]{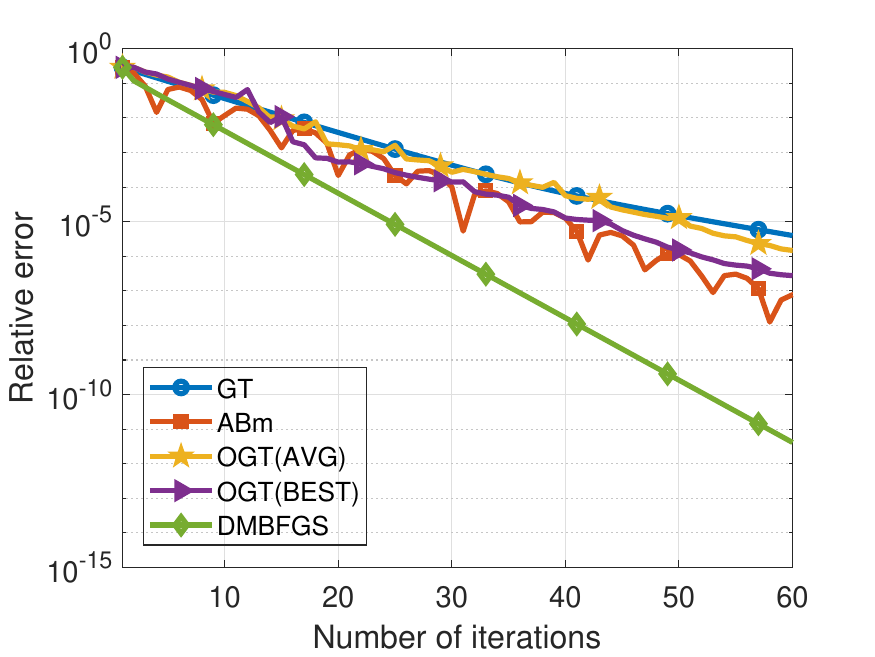}%
		\label{fig_6_a}}
		\hfil
		\subfloat[\textbf{w8a}]{\includegraphics[width=2.5in]{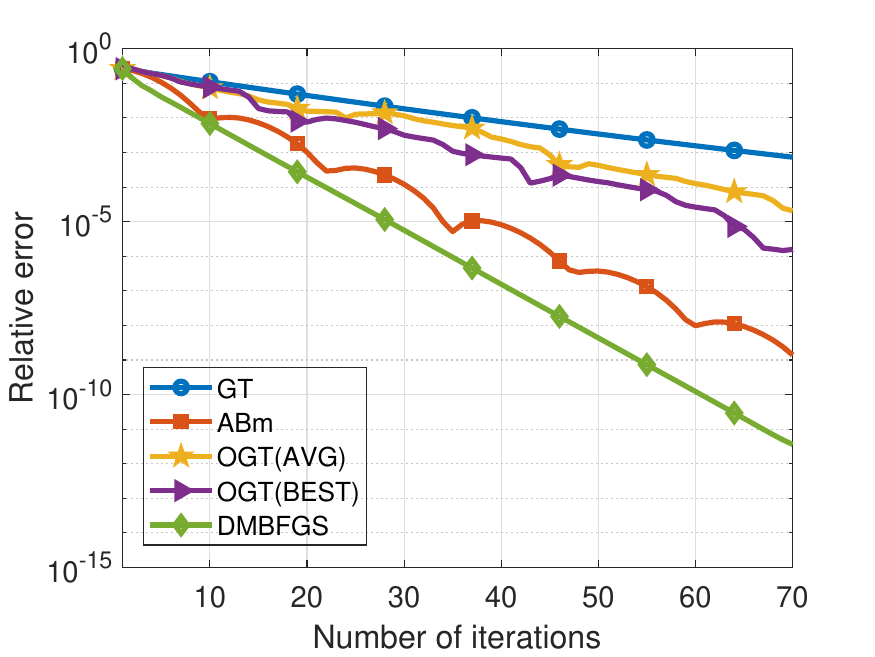}%
		\label{fig_7_a}}
		\subfloat[\textbf{a9a}]{\includegraphics[width=2.5in]{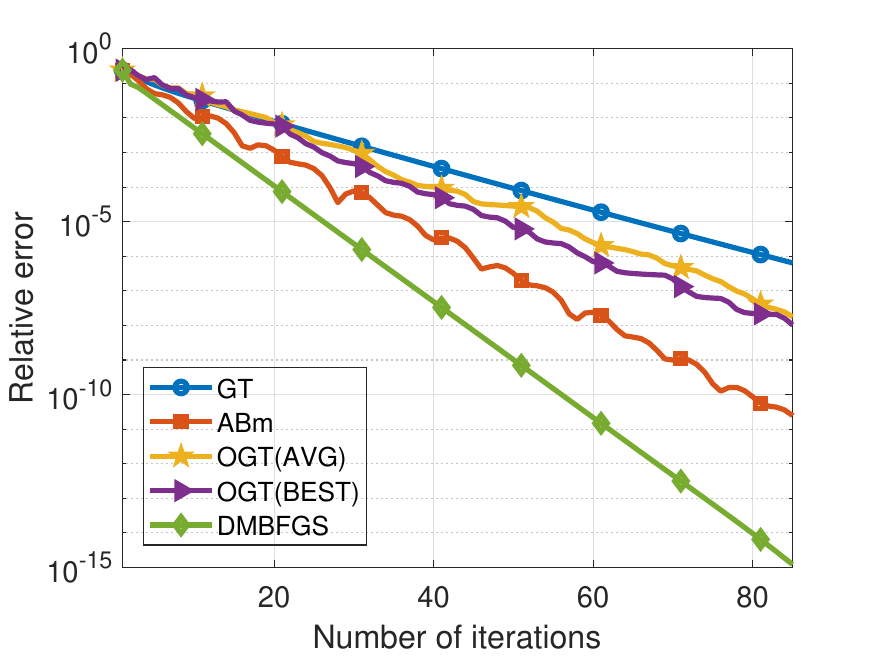}%
		\label{fig_8_a}}
	\caption{Comparisons with gradient-based algorithms for minimizing the strongly convex logistic regression problem \eqref{logistic_problem} on different datasets.}
	\label{logistic_iter}
\end{figure*}

\begin{figure*}[!t]
	\centering
		\subfloat[\textbf{mushroom}]{\includegraphics[width=2.5in]{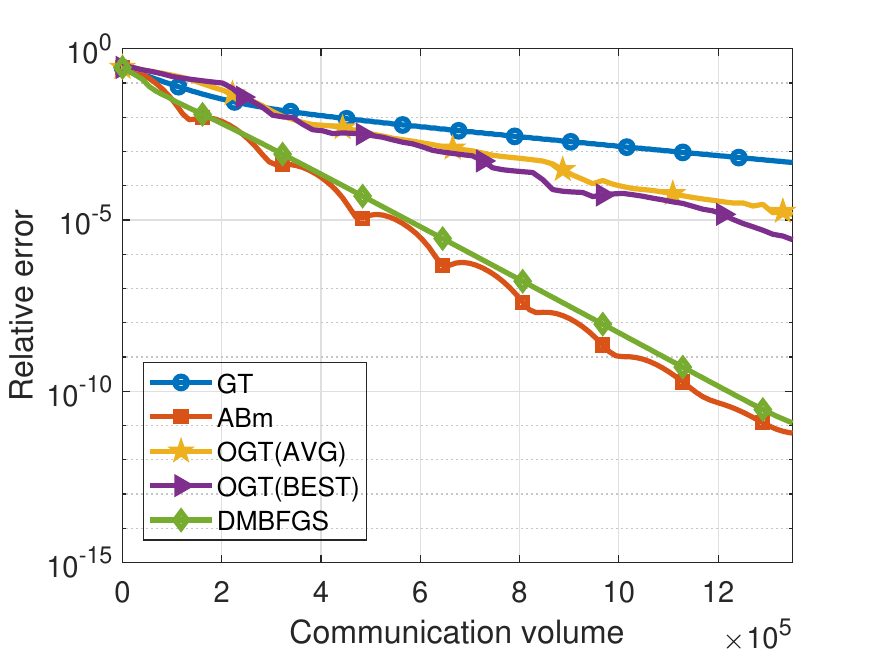}%
		\label{fig_5_b}}
	\subfloat[\textbf{ijcnn1}]{\includegraphics[width=2.5in]{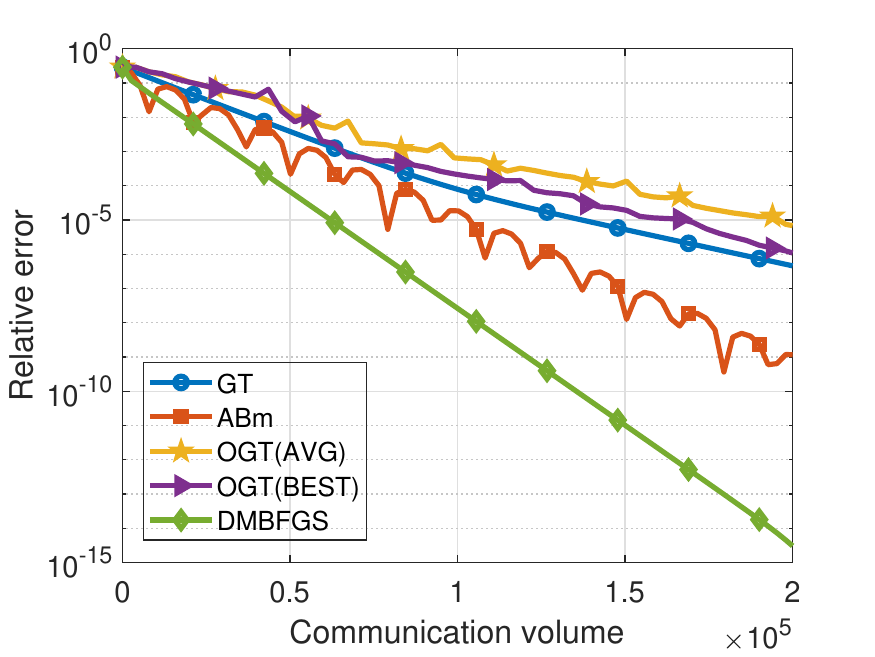}%
		\label{fig_6_b}}
		\hfil
		\subfloat[\textbf{w8a}]{\includegraphics[width=2.5in]{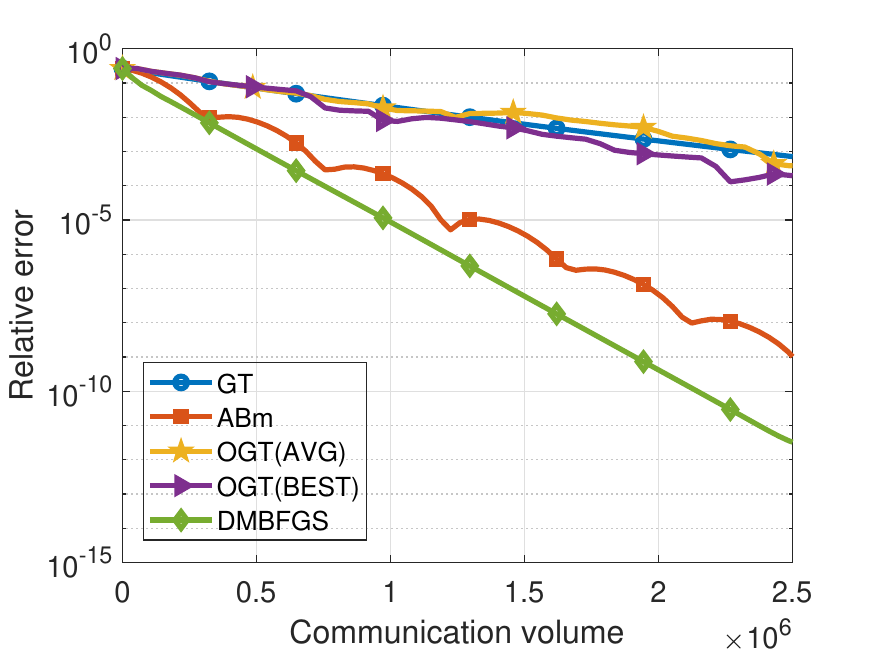}%
		\label{fig_7_b}}
		\subfloat[\textbf{a9a}]{\includegraphics[width=2.5in]{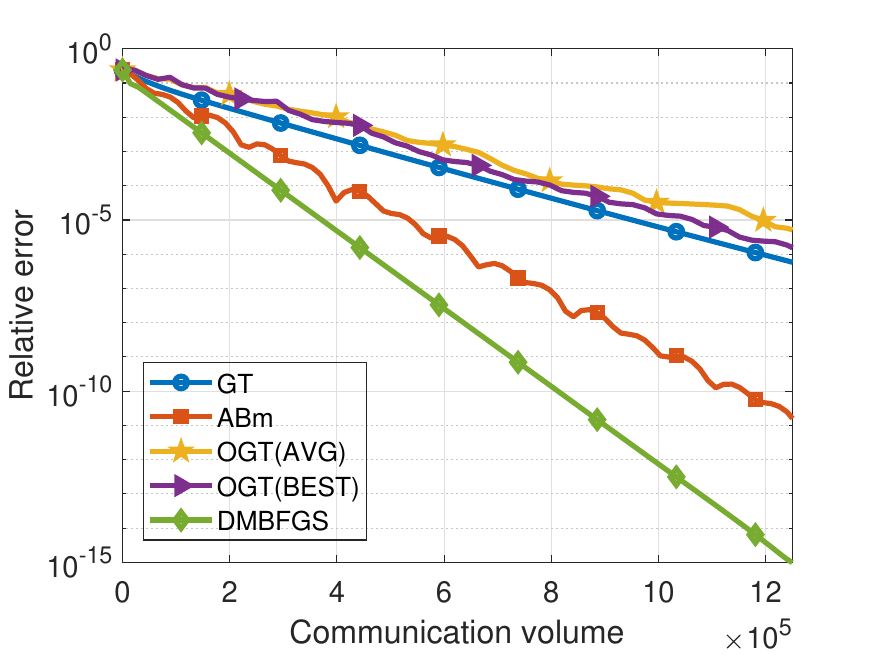}%
		\label{fig_8_b}}
	\caption{Comparisons with gradient-based algorithms for minimizing the strongly convex logistic regression problem \eqref{logistic_problem} on different datasets.}
	\label{logistic_comm}
\end{figure*}

\begin{figure*}[!t]
	\centering
	\subfloat[\textbf{mushroom}]{\includegraphics[width=2.5in]{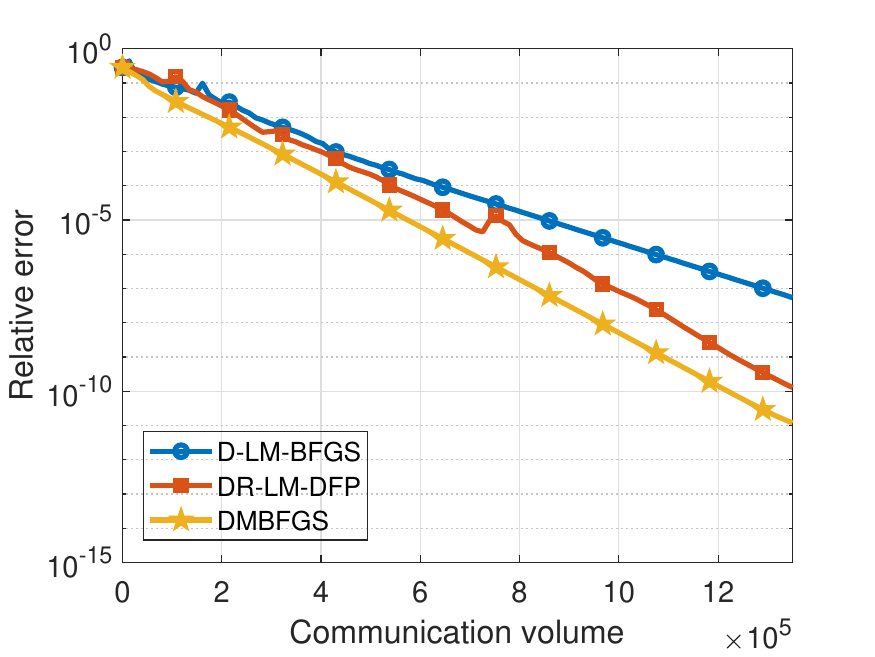}%
		\label{fig_5_c}}
	\subfloat[\textbf{ijcnn1}]{\includegraphics[width=2.5in]{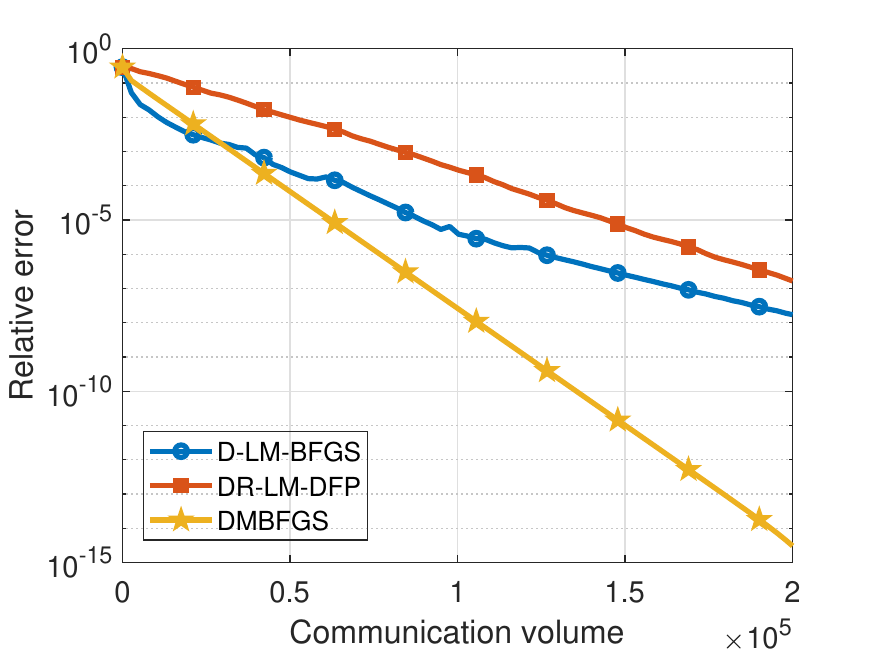}%
		\label{fig_6_c}}
	\hfil
	\subfloat[\textbf{w8a}]{\includegraphics[width=2.5in]{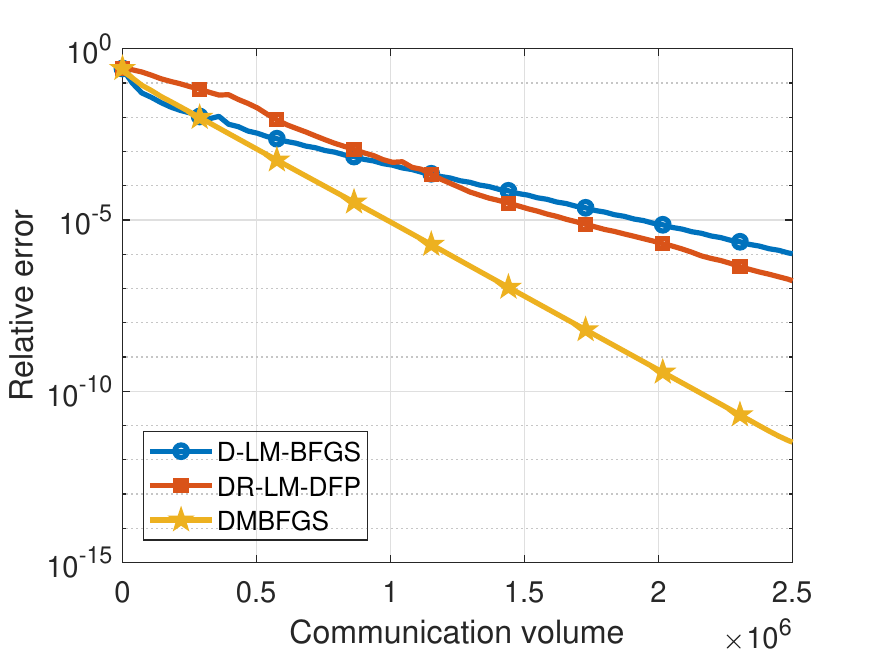}%
		\label{fig_7_c}}
	\subfloat[\textbf{a9a}]{\includegraphics[width=2.5in]{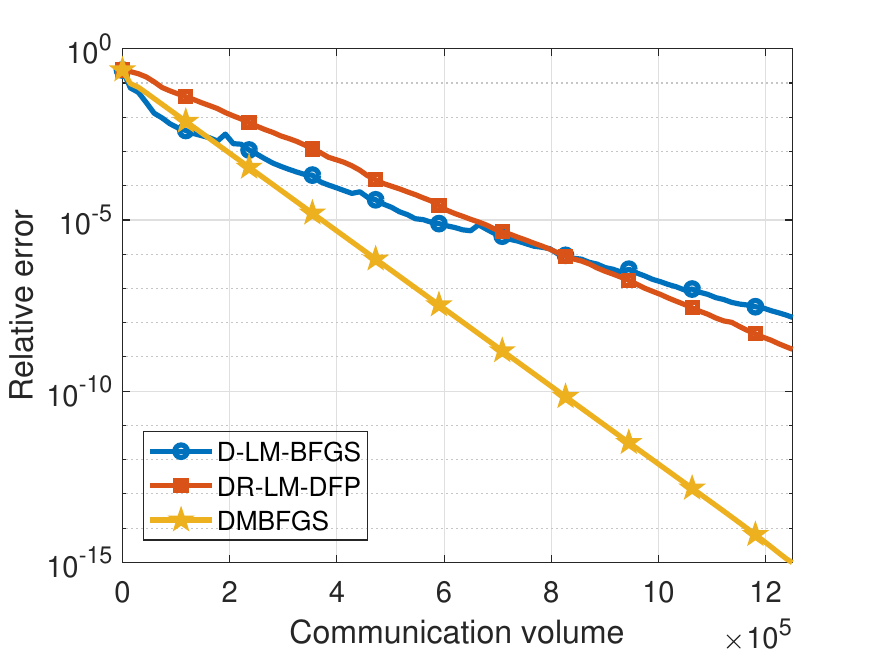}%
		\label{fig_8_c}}
	\caption{Comparisons with limited memoryless quasi-Newton algorithms for minimizing the strongly convex logistic regression problem \eqref{logistic_problem} on different datasets.}
	\label{logistic_quasi}
\end{figure*}

\section{CONCLUSIONS}
This paper considers a decentralized consensus optimization problem and proposes two new algorithms: a new decentralized conjugate gradient (NDCG) method and a decentralized memoryless BFGS method (DMBFGS). 
For nonconvex problems, we propose a new  decentralized conjugate gradient (NDCG) method.
NDCG is proposed for solving \eqref{obj_fun1} when the objective function is nonconvex but Lipschitz continuously differentiable. 
Using the average gradient tracking technique and a newly developed PRP type conjugate parameter. NDCG can take constant stepsizes and has global convergence to 
a stationary point of \eqref{obj_fun1}. 
DMBFGS is targeted for the case when the local objective functions in \eqref{obj_fun1} are strongly convex. 
The memoryless BFGS updating techniques are used in DMBFGS, where the quasi-Newton matrices are constructed by adaptively incorporating local curvature information.
Under proper assumptions, DMBFGS is shown to have globally linear convergence for strongly convex optimization problems. 
Our numerical results on nonconvex logistic regression problems show that NDCG performs better to other gradient-based comparison methods for nonconvex decentralized optimization.
Moreover, our numerical results on solving strongly convex linear regression and logistic regression problems indicate that DMBFGS is more insensitive to the problem condition number
and performs significantly better than both the gradient-based and the quasi-Newton type comparison methods.


\bibliographystyle{IEEEtran}
\bibliography{refs}

\begin{thebibliography}{10}
\providecommand{\url}[1]{#1}
\csname url@samestyle\endcsname
\providecommand{\newblock}{\relax}
\providecommand{\bibinfo}[2]{#2}
\providecommand{\BIBentrySTDinterwordspacing}{\spaceskip=0pt\relax}
\providecommand{\BIBentryALTinterwordstretchfactor}{4}
\providecommand{\BIBentryALTinterwordspacing}{\spaceskip=\fontdimen2\font plus
\BIBentryALTinterwordstretchfactor\fontdimen3\font minus
  \fontdimen4\font\relax}
\providecommand{\BIBforeignlanguage}[2]{{%
\expandafter\ifx\csname l@#1\endcsname\relax
\typeout{** WARNING: IEEEtran.bst: No hyphenation pattern has been}%
\typeout{** loaded for the language `#1'. Using the pattern for}%
\typeout{** the default language instead.}%
\else
\language=\csname l@#1\endcsname
\fi
#2}}
\providecommand{\BIBdecl}{\relax}
\BIBdecl

\bibitem{fusco2021decentralized}
G.~Fusco and M.~Russo, ``A decentralized approach for voltage control by
  multiple distributed energy resources,'' \emph{IEEE Transactions on Smart
  Grid}, vol.~12, no.~4, pp. 3115--3127, 2021.

\bibitem{jeong2022asynchronous}
E.~Jeong, M.~Zecchin, and M.~Kountouris, ``Asynchronous decentralized learning
  over unreliable wireless networks,'' in \emph{2022 International Conference
  on Communications (ICC)}, 2022, pp. 607--612.

\bibitem{zhang2022distributed}
X.~Zhang, C.~Hu, B.~He, and Z.~Han, ``Distributed reptile algorithm for
  meta-learning over multi-agent systems,'' \emph{IEEE Transactions on Signal
  Processing}, vol.~70, pp. 5443--5456, 2022.

\bibitem{chen2020fully}
Z.~Chen, Z.~Li, C.~Guo, J.~Wang, and Y.~Ding, ``Fully distributed robust
  reserve scheduling for coupled transmission and distribution systems,''
  \emph{IEEE Transactions on Power Systems}, vol.~36, no.~1, pp. 169--182,
  2020.

\bibitem{pillutla2022robust}
K.~Pillutla, S.~M. Kakade, and Z.~Harchaoui, ``Robust aggregation for federated
  learning,'' \emph{IEEE Transactions on Signal Processing}, vol.~70, pp.
  1142--1154, 2022.

\bibitem{nedic2009distributed}
A.~Nedi{\'c} and A.~Ozdaglar, ``Distributed subgradient methods for multi-agent
  optimization,'' \emph{IEEE Transactions on Automatic Control}, vol.~54,
  no.~1, pp. 48--61, 2009.

\bibitem{yuan2016convergence}
K.~Yuan, Q.~Ling, and W.~Yin, ``On the convergence of decentralized gradient
  descent,'' \emph{SIAM Journal on Optimization}, vol.~26, no.~3, pp.
  1835--1854, 2016.

\bibitem{zeng2018nonconvex}
J.~Zeng and W.~Yin, ``On nonconvex decentralized gradient descent,'' \emph{IEEE
  Transactions on Signal Processing}, vol.~66, no.~11, pp. 2834--2848, 2018.

\bibitem{xu2015augmented}
J.~Xu, S.~Zhu, Y.~C. Soh, and L.~Xie, ``Augmented distributed gradient methods
  for multi-agent optimization under uncoordinated constant stepsizes,'' in
  \emph{2015 IEEE Conference on Decision and Control (CDC)}, 2015, pp.
  2055--2060.

\bibitem{qu2017harnessing}
G.~Qu and N.~Li, ``Harnessing smoothness to accelerate distributed
  optimization,'' \emph{IEEE Transactions on Control of Network Systems},
  vol.~5, no.~3, pp. 1245--1260, 2017.

\bibitem{nedic2017achieving}
A.~Nedi{\'c}, A.~Olshevsky, and W.~Shi, ``Achieving geometric convergence for
  distributed optimization over time-varying graphs,'' \emph{SIAM Journal on
  Optimization}, vol.~27, no.~4, pp. 2597--2633, 2017.

\bibitem{nedic2017geometrically}
A.~Nedi{\'c}, A.~Olshevsky, W.~Shi, and C.~A. Uribe, ``Geometrically convergent
  distributed optimization with uncoordinated step-sizes,'' in \emph{2017
  American Control Conference (ACC)}.\hskip 1em plus 0.5em minus 0.4em\relax
  IEEE, 2017, pp. 3950--3955.

\bibitem{xin2019distributed}
R.~Xin and U.~A. Khan, ``Distributed heavy-ball: A generalization and
  acceleration of first-order methods with gradient tracking,'' \emph{IEEE
  Transactions on Automatic Control}, vol.~65, no.~6, pp. 2627--2633, 2019.

\bibitem{gao2022achieving}
J.~Gao, X.-W. Liu, Y.-H. Dai, Y.~Huang, and P.~Yang, ``Achieving geometric
  convergence for distributed optimization with barzilai-borwein step sizes,''
  \emph{Science China. Information Sciences}, vol.~65, no.~4, p. 149204, 2022.

\bibitem{zhang2020distributed}
J.~Zhang, K.~You, and K.~Cai, ``Distributed dual gradient tracking for resource
  allocation in unbalanced networks,'' \emph{IEEE Transactions on Signal
  Processing}, vol.~68, pp. 2186--2198, 2020.

\bibitem{song2024optimal}
Z.~Song, L.~Shi, S.~Pu, and M.~Yan, ``Optimal gradient tracking for
  decentralized optimization,'' \emph{Mathematical Programming}, vol. 207,
  no.~1, pp. 1--53, 2024.

\bibitem{zhu2010discrete}
M.~Zhu and S.~Mart{\'\i}nez, ``Discrete-time dynamic average consensus,''
  \emph{Automatica}, vol.~46, no.~2, pp. 322--329, 2010.

\bibitem{lu2019gnsd}
S.~Lu, X.~Zhang, H.~Sun, and M.~Hong, ``Gnsd: A gradient-tracking based
  nonconvex stochastic algorithm for decentralized optimization,'' in
  \emph{2019 IEEE Data Science Workshop (DSW)}.\hskip 1em plus 0.5em minus
  0.4em\relax IEEE, 2019, pp. 315--321.

\bibitem{hong2022divergence}
M.~Hong, S.~Zeng, J.~Zhang, and H.~Sun, ``On the divergence of decentralized
  nonconvex optimization,'' \emph{SIAM Journal on Optimization}, vol.~32,
  no.~4, pp. 2879--2908, 2022.

\bibitem{takezawa2022momentum}
Y.~Takezawa, H.~Bao, K.~Niwa, R.~Sato, and M.~Yamada, ``Momentum tracking:
  Momentum acceleration for decentralized deep learning on heterogeneous
  data,'' \emph{arXiv preprint arXiv:2209.15505}, 2022.

\bibitem{huang2024accelerated}
K.~Huang, S.~Pu, and A.~Nedi{\'c}, ``An accelerated distributed stochastic
  gradient method with momentum,'' \emph{arXiv preprint arXiv:2402.09714},
  2024.

\bibitem{NEURIPS2023_98f8c89a}
S.~A. Aketi, A.~Hashemi, and K.~Roy, ``Global update tracking: A decentralized
  learning algorithm for heterogeneous data,'' in \emph{Advances in Neural
  Information Processing Systems}, vol.~36, 2023, pp. 48\,939--48\,961.

\bibitem{xu2020distributed}
C.~Xu, J.~Zhu, Y.~Shang, and Q.~Wu, ``A distributed conjugate gradient online
  learning method over networks,'' \emph{Complexity}, vol. 2020, pp. 1--13,
  2020.

\bibitem{chen2023decentralized}
J.~Chen, H.~Ye, M.~Wang, T.~Huang, G.~Dai, I.~W. Tsang, and Y.~Liu,
  ``Decentralized riemannian conjugate gradient method on the stiefel
  manifold,'' in \emph{2024 International Conference on Learning
  Representations (ICLR)}, 2024.

\bibitem{shorinwa2023distributed}
O.~Shorinwa and M.~Schwager, ``Distributed conjugate gradient method via
  conjugate direction tracking,'' in \emph{2024 American Control Conference
  (ACC)}.\hskip 1em plus 0.5em minus 0.4em\relax IEEE, 2024, pp. 2066--2073.

\bibitem{barzilai1988two}
J.~Barzilai and J.~M. Borwein, ``Two-point step size gradient methods,''
  \emph{IMA journal of numerical analysis}, vol.~8, no.~1, pp. 141--148, 1988.

\bibitem{eisen2017decentralized}
M.~Eisen, A.~Mokhtari, and A.~Ribeiro, ``Decentralized quasi-newton methods,''
  \emph{IEEE Transactions on Signal Processing}, vol.~65, no.~10, pp.
  2613--2628, 2017.

\bibitem{eisen2019primal}
------, ``A primal-dual quasi-newton method for exact consensus optimization,''
  \emph{IEEE Transactions on Signal Processing}, vol.~67, no.~23, pp.
  5983--5997, 2019.

\bibitem{li2021bfgs}
Y.~Li, Y.~Gong, N.~M. Freris, P.~Voulgaris, and D.~Stipanovi{\'c}, ``Bfgs-admm
  for large-scale distributed optimization,'' in \emph{2021 IEEE Conference on
  Decision and Control (CDC)}, 2021, pp. 1689--1694.

\bibitem{zhang2023variance}
J.~Zhang, H.~Liu, A.~M.-C. So, and Q.~Ling, ``Variance-reduced stochastic
  quasi-newton methods for decentralized learning,'' \emph{IEEE Transactions on
  Signal Processing}, vol.~71, pp. 311--326, 2023.

\bibitem{shanno1978convergence}
D.~F. Shanno, ``On the convergence of a new conjugate gradient algorithm,''
  \emph{SIAM Journal on Numerical Analysis}, vol.~15, no.~6, pp. 1247--1257,
  1978.

\bibitem{sayed2014diffusion}
A.~H. Sayed, ``Diffusion adaptation over networks,'' in \emph{Academic Press
  Library in Signal Processing}, 2014, vol.~3, pp. 323--453.

\bibitem{xiao2007distributed}
L.~Xiao, S.~Boyd, and S.-J. Kim, ``Distributed average consensus with
  least-mean-square deviation,'' \emph{Journal of parallel and distributed
  computing}, vol.~67, no.~1, pp. 33--46, 2007.

\bibitem{horn2012matrix}
R.~A. Horn and C.~R. Johnson, \emph{Matrix analysis}.\hskip 1em plus 0.5em
  minus 0.4em\relax Cambridge university press, 2012.

\bibitem{fletcher1964function}
R.~Fletcher and C.~M. Reeves, ``Function minimization by conjugate gradients,''
  \emph{The computer journal}, vol.~7, no.~2, pp. 149--154, 1964.

\bibitem{polyak1969conjugate}
B.~T. Polyak, ``The conjugate gradient method in extremal problems,''
  \emph{USSR Computational Mathematics and Mathematical Physics}, vol.~9,
  no.~4, pp. 94--112, 1969.

\bibitem{hestenes1952methods}
M.~R. Hestenes, E.~Stiefel \emph{et~al.}, ``Methods of conjugate gradients for
  solving linear systems,'' \emph{Journal of research of the National Bureau of
  Standards}, vol.~49, no.~6, pp. 409--436, 1952.

\bibitem{dai1999nonlinear}
Y.-H. Dai and Y.~Yuan, ``A nonlinear conjugate gradient method with a strong
  global convergence property,'' \emph{SIAM Journal on optimization}, vol.~10,
  no.~1, pp. 177--182, 1999.

\bibitem{hu1991efficient}
Y.~Hu and C.~Storey, ``Efficient generalized conjugate gradient algorithms,
  part 2: Implementation,'' \emph{Journal of Optimization Theory and
  Applications}, vol.~69, pp. 139--152, 1991.

\bibitem{perry1978modified}
A.~Perry, ``A modified conjugate gradient algorithm,'' \emph{Operations
  Research}, vol.~26, no.~6, pp. 1073--1078, 1978.

\bibitem{dai2000nonlinear}
Y.-H. Dai and Y.~Yuan, \emph{Nonlinear Conjugate Gradient Methods}.\hskip 1em
  plus 0.5em minus 0.4em\relax Shang Hai Science and Technology Publisher,
  2000(in Chinese).

\bibitem{dai2011convergence}
Y.-H. Dai, ``Convergence of conjugate gradient methods with constant
  stepsizes,'' \emph{Optimization Methods and Software}, vol.~26, no.~6, pp.
  895--909, 2011.

\bibitem{alghunaim2022unified}
S.~A. Alghunaim and K.~Yuan, ``A unified and refined convergence analysis for
  non-convex decentralized learning,'' \emph{IEEE Transactions on Signal
  Processing}, vol.~70, pp. 3264--3279, 2022.

\end{thebibliography}

\end{document}